\newtheoremstyle{tnp}
  {\topsep}   
  {\topsep}   
  { \itshape}  
  {0pt}       
  {\bfseries} 
  {.}         
  {5pt plus 1pt minus 1pt}  
  {\thmname{#1}\thmnumber{ #2}\if\relax\detokenize{#3}\relax\else
  {\normalfont \; \thmnote{#3}}\fi}          
  \theoremstyle{tnp}
\theoremstyle{tnp}	 				\newtheorem{theorem}{Theorem}[section]
\theoremstyle{plain}	 				\newtheorem{conjecture}{Conjecture}[section]
\theoremstyle{plain}	 				\newtheorem{conjecturesub}{Conjecture}[subsection]
\theoremstyle{plain} 					\newtheorem{corollary}{Corollary}[section]
\theoremstyle{plain} 					\newtheorem*{query}{Query}
\DeclareMathOperator\bind{bind}
\DeclareMathOperator\BM{BM}
\DeclareMathOperator\defi{def}
\begin{document}

\title{\bf A Survey of Best Monotone Degree Conditions\\
  for Graph Properties}

\date{}      
\maketitle

\vspace{-40pt}

\noindent
\[\text{D. Bauer}\,^\mathrm{1},\quad \text{H.J. Broersma}\,^\mathrm{2},\quad
\text{J. van den Heuvel}\,^\mathrm{3},\quad \text{N. Kahl}\,^\mathrm{4},\]
\[\text{A. Nevo}\,^\mathrm{1},\quad \text{E. Schmeichel}\,^\mathrm{5},\quad
\text{D.R. Woodall}\,^\mathrm{6},\quad \text{M. Yatauro}\,^\mathrm{7}\]

\bigskip
\noindent {\footnotesize $^{\mathrm{1}}$\,\textit{Department of
    Mathematical Sciences, Stevens Institute of Technology, Hoboken, NJ
    07030,
    USA}}\\[0.5mm]
{\footnotesize $^{\mathrm{2}}$\,\textit{Faculty of EEMCS, University of
    Twente, P.O. Box 217, 7500 AE Enschede, The Netherlands}}\\[0.5mm]
{\footnotesize $^{\mathrm{3}}$\,\textit{Department of Mathematics, London
    School of Economics, Houghton Street, London WC2A 2AE, UK}}\\[0.5mm]
{\footnotesize $^{\mathrm{4}}$\,\textit{Department of Mathematics and
    Computer Science, Seton Hall University, South Orange, NJ 07079,
    USA}}\\[0.5mm]
{\footnotesize $^{\mathrm{5}}$\,\textit{Department of Mathematics, San
    Jos\'{e} State University, San Jos\'{e}, CA 95192, USA}}\\[0.5mm]
{\footnotesize $^{\mathrm{6}}$\,\textit{School of Mathematical Sciences,
    University of Nottingham, Nottingham NG7 2RD, UK}}\\[0.5mm]
{\footnotesize $^{\mathrm{7}}$\,\textit{Department of Mathematics, Penn
    State University, Brandywine Campus, Media, PA 19063, USA}}\\[2mm]
{\footnotesize Email: \url{dbauer@stevens.edu},
  \url{h.j.broersma@utwente.nl}, \url{j.van-den-heuvel@lse.ac.uk},
  \url{kahlnath@shu.edu},\\
  \mbox{}\hphantom{Email: }\url{anevo@stevens.edu},
  \url{schmeichel@math.sjsu.edu}, \url{douglas.woodall@nottingham.ac.uk},
  \url{mry3@psu.edu}}

\bigskip
\begin{abstract}
  \noindent
  We survey sufficient degree conditions, for a variety of graph
  properties, that are best possible in the same sense that Chv\'{a}tal's
  well-known degree condition for hamiltonicity is best possible.
\end{abstract}

\section{Introduction}
\label{sec1}

We consider only finite graphs without loops or multiple edges. Our
terminology and notation are standard except as indicated. A good reference
for undefined terms is~\cite{CLZ11}. We mention only that given graphs
$G,H$ with disjoint vertex sets, we will denote their disjoint union by
$G\cup H$, and their join by $G+H$.

We generally use the standard abbreviation for integer sequences; e.g.,
$(4,4,4,4,4,5,5,6)$ will be denoted $4^55^26^1$. An integer sequence
$\pi=(d_1\le\cdots\le d_n)$ is called \emph{graphical} if there exists a
graph~$G$ having~$\pi$ as its vertex degree sequence; in that case, $G$ is
called a \emph{realization} of~$\pi$. If~$P$ is a graph property, such as
`hamiltonian' or `$k$-connected', we call a graphical sequence~$\pi$
\emph{forcibly~$P$} if every realization of~$\pi$ has property~$P$. If
$\pi=(d_1\le\cdots\le d_n)$ and $\pi'=( d_1'\le\cdots\le d_n')$ are integer
sequences, we say \emph{$\pi'$ majorizes~$\pi$}, denoted $\pi'\ge\pi$, if
$d_i'\ge d_i$ for $1\le i\le n$. There is an analogous definition and
notation for $\pi'$ \emph{minorizes~$\pi$}.

Historically, the vertex degrees of a graph have been used to provide
sufficient conditions for the graph to have certain properties, such as
hamiltonicity or $k$-connectedness. In particular, sufficient conditions
for~$\pi$ to be forcibly hamiltonian were given by several authors in
\cite{B69,D52,P62}, culminating in the following theorem of
Chv\'{a}tal~\cite{C72}.

\begin{theorem}[(Chv\'{a}tal~\cite{C72})]\label{thm:11}\mbox{}\\*
  Let $\pi=(d_1\le\cdots\le d_n)$ be a graphical sequence, with $n\ge3$. If
  \begin{equation}\label{eq:11}
    d_i\le i\; \Rightarrow\; d_{n-i}\ge n-i,\quad
    \text{for $1\le i\le\tfrac12(n-1)$},
  \end{equation}
  then~$\pi$ is forcibly hamiltonian.
\end{theorem}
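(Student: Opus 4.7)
The plan is to argue by contradiction from an edge-maximal counterexample. First observe that condition~\eqref{eq:11} is preserved under componentwise increase of the degree sequence: if $\pi'\ge\pi$ and $d'_i\le i$, then $d_i\le i$, hence $d_{n-i}\ge n-i$, and therefore $d'_{n-i}\ge n-i$. Since the sorted degree sequence of $G+e$ majorizes that of $G$ for any added edge $e$, I may assume a hypothetical non-Hamiltonian realization $G$ of $\pi$ is edge-maximal non-Hamiltonian. Such a $G$ is not complete (since $K_n$ is Hamiltonian for $n\ge 3$), and for any two non-adjacent vertices $u,v$ of $G$ the graph $G+uv$ is Hamiltonian, so $G$ contains a Hamilton path from $u$ to $v$.

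Next I would choose non-adjacent $u,v$ in $G$ with $d(u)+d(v)$ maximum, assume $d(u)\le d(v)$, and set $k=d(u)$. Writing a Hamilton $u$-$v$ path as $v_1=u,v_2,\dots,v_n=v$, the standard Ore-type swap shows that whenever $v_1v_{i+1}\in E(G)$ and $v_iv_n\in E(G)$ for a common index $i$, a Hamilton cycle arises, contradicting the choice of $G$. Hence the sets $\{i:v_1v_{i+1}\in E(G)\}$ and $\{i:v_iv_n\in E(G)\}$ are disjoint subsets of $\{1,\dots,n-1\}$, which gives $d(u)+d(v)\le n-1$ and in particular $k\le (n-1)/2$. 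Moreover, for each of the $k$ indices $i$ with $v_1v_{i+1}\in E(G)$, the vertex $v_i$ is a non-neighbor of $v$, and the maximality of $d(u)+d(v)$ over non-adjacent pairs then forces $d(v_i)\le d(u)=k$. This exhibits $k$ distinct vertices of degree at most $k$, so $d_k\le k$.

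Finally I would apply~\eqref{eq:11} at $i=k$ to deduce $d_{n-k}\ge n-k$; let $X$ denote the set of vertices of degree at least $n-k$, so $|X|\ge k+1$. Since $d(u)=k<n-k$, we have $u\notin X$. If some $w\in X$ were non-adjacent to $u$, the maximality of $d(u)+d(v)$ would force $d(w)\le d(v)\le n-1-k<n-k$, contradicting $w\in X$. Hence every vertex of $X$ is a neighbor of $u$, yielding $d(u)\ge |X|\ge k+1>k=d(u)$, the desired contradiction. I expect the path-rotation argument in the second paragraph to be the main hurdle, since it must simultaneously produce the bound $d(u)+d(v)\le n-1$ \emph{and} the $k$ low-degree non-neighbors of $v$ that supply exactly the hypothesis needed to invoke~\eqref{eq:11} at the right index.
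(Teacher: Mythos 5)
The paper gives no proof of Theorem 1.1, citing Chv\'atal; your argument is precisely Chv\'atal's classical proof (monotonicity of the condition, edge-maximal counterexample, a non-adjacent pair $u,v$ maximizing $d(u)+d(v)$, the crossover argument producing $k$ vertices of degree at most $k=d(u)$, and then the contradiction via the $k+1$ vertices of degree at least $n-k$). The argument is correct and complete, so there is nothing to add.
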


\noindent
Unlike its predecessors, Theorem~\ref{thm:11} has the property that if a
sequence~$\pi$ fails to satisfy condition~\eqref{eq:11} for some index~$i$,
then~$\pi$ is majorized by the sequence $\pi'=i^i(n-i-1)^{n-2i}(n-1)^i$,
with nonhamiltonian realization $G'=K_i+(\overline{K_i}\cup K_{n-2i})$. As
we will see in Section~\ref{sec2}, this key property implies that
condition~\eqref{eq:11} in Theorem~\ref{thm:11} is the best of an entire
important class of degree conditions for~$\pi$ to be forcibly hamiltonian.

Sufficient conditions for $\pi$ to be forcibly $k$-connected have been
given by several authors in \cite{CH68,CKK68}, culminating in the following
theorem of Bondy~\cite{B69} (though the form in which we present it is due
to Boesch~\cite{B74}).

\begin{theorem}[(Bondy~\cite{B69})]\label{thm:12}\mbox{}\\*
  Let $\pi=(d_1\le\cdots\le d_n)$ be a graphical sequence, with $n\ge2$ and
  $1\le k\le n-1$. If
  \begin{equation}\label{eq:12}
    d_i\le i+k-2\; \Rightarrow\; d_{n-k+1}\ge n-i,\quad
    \text{for $1\le i\le\tfrac12(n-k+1)$},
  \end{equation}
  then~$\pi$ is forcibly $k$-connected.
\end{theorem}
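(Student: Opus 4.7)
I would argue by contradiction. Assume $\pi$ satisfies \eqref{eq:12} but admits a realization $G$ that is not $k$-connected. Since $k \le n-1$, this forces $G$ to have a cutset of size at most $k-1$; fix a minimum such cutset $S_0$, write $s_0 := |S_0| \le k-1$, and partition $V(G) \setminus S_0 = A_0 \cup B_0$ into two non-empty parts with no edges between them and $|A_0| \le |B_0|$. The plan is to apply \eqref{eq:12} at an index $i$ for which both the degree hypothesis and the index-range condition hold.

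Two degree bounds come essentially for free. Each vertex of $A_0$ has all its neighbours in $A_0 \cup S_0$, giving $d_{|A_0|} \le |A_0| + s_0 - 1 \le |A_0| + k - 2$. Moreover every vertex of $A_0 \cup B_0$ has degree at most $|B_0| + s_0 - 1 = n - |A_0| - 1$, so only the $s_0 \le k-1$ vertices of $S_0$ can have degree $\ge n - |A_0|$; hence $d_{n-k+1} \le n - |A_0| - 1$. If $|A_0| \le \tfrac12(n-k+1)$ we could invoke \eqref{eq:12} directly at $i = |A_0|$ and be done.

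The obstacle is that we only know $|A_0| \le \tfrac12(n-s_0)$, which is too weak when $s_0 < k-1$. To repair this, I would enlarge $S_0$ to a set $S$ of size exactly $k-1$ by moving $k-1-s_0$ vertices from $A_0 \cup B_0$ into $S$, keeping both residues $A \subseteq A_0$ and $B \subseteq B_0$ non-empty; this is possible since $|A_0|+|B_0|-2 = n-s_0-2 \ge k-1-s_0$, by $n \ge k+1$. Set $a := \min(|A|,|B|)$, relabel so that $A$ is the smaller side, and let $X$ denote whichever of $A_0,B_0$ contains $A$. Then $a \le \tfrac12(n-k+1)$, and each vertex of $A$ has its neighbours in $X \cup S_0$, hence degree at most $|X|+s_0-1$. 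Since the $|X|-a$ vertices moved out of $X$ are part of the $k-1-s_0$ vertices moved in total, $|X|+s_0-1 \le a+k-2$, yielding $d_a \le a+k-2$. The high-degree count also survives: one checks $a \le |A_0|$ in either labeling, and then every vertex outside $S_0$ still has degree at most $n - |A_0| - 1 \le n-a-1$, so at most $s_0 \le k-1$ vertices have degree $\ge n-a$, and $d_{n-k+1} \le n-a-1$.

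Now \eqref{eq:12} applied at $i = a$ forces $d_{n-k+1} \ge n-a$, contradicting $d_{n-k+1} \le n-a-1$. The main obstacle, as anticipated, is the enlargement step: one must verify that both residues can be kept non-empty (which is where the hypothesis $n \ge k+1$ enters) and that the sharp bound $d_a \le a+k-2$ is inherited after passing from $A_0$ to the possibly strictly smaller side $A$. Once these are handled, the contradiction with \eqref{eq:12} is immediate from the degree counts.
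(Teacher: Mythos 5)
The paper does not prove Theorem~\ref{thm:12} --- it is quoted from Bondy and Boesch --- so there is no in-paper proof to compare against; your argument is the standard contrapositive/cutset-padding proof of Boesch's formulation, and it is correct. The delicate points --- enlarging the minimum cutset $S_0$ to a set $S$ of size exactly $k-1$ so that $a=\min(|A|,|B|)\le\tfrac12(n-k+1)$, checking that $d_a\le a+k-2$ survives the padding because the $|X|-a$ vertices removed from the small side are among the $k-1-s_0$ vertices moved, and observing that only the $s_0\le k-1$ vertices of $S_0$ can have degree $\ge n-a$ so that $d_{n-k+1}\le n-a-1$ --- are all handled correctly, and the contradiction with \eqref{eq:12} at $i=a$ is immediate.
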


\noindent
If the sequence~$\pi$ fails to satisfy~\eqref{eq:12} for some index~$i$,
then~$\pi$ is majorized by $\pi'=(i+k-2)^i\linebreak[1]
(n-i-1)^{n-k-i+1}(n-1)^{k-1}$, with not-$k$-connected realization
$G'=K_{k-1}+(K_i\cup K_{n-k-i+1})$. Thus~\eqref{eq:12} is the best
condition for~$\pi$ to be forcibly $k$-connected in precisely the same
way~\eqref{eq:11} is the best condition for~$\pi$ to be forcibly
hamiltonian.

Our goal in this paper is to survey degree conditions, for a wide variety
of graph properties, that are best in precisely the same sense as
conditions~\eqref{eq:11} and~\eqref{eq:12} above. In Section~\ref{sec2}, we
present a formal framework which allows us -- at least in principle -- to
identify and construct such degree conditions, and to evaluate their
inherent complexity. In Section~\ref{sec3}, we apply this framework, and
consider best degree conditions for the following graph properties and
parameters: edge-connectivity (Section~\ref{sec3.1}), binding number
(Section~\ref{sec3.2}), toughness (Section~\ref{sec3.3}), existence of
factors (Section~\ref{sec3.4}), existence of paths and cycles
(Section~\ref{sec3.5}), and independence number, clique number, chromatic
number, and vertex arboricity (Section~\ref{sec3.6}). In
Section~\ref{sec4}, we consider best degree conditions for~$\pi$ to be
forcibly $P_1\Rightarrow P_2$. Such conditions represent the least amount
of degree strength that needs to be added to~$P_1$ to get a sufficient
condition for~$P_2$, and are especially interesting when~$P_1$ is a
necessary condition for~$P_2$. We will focus in Section~\ref{sec4} on the
situation where~$P_2$ is `hamiltonian', and~$P_1$ belongs to the set
\{\,`traceable', `$2$-connected', `$1$-binding', `contains a $2$-factor',
`$1$-tough'\,\} of prominent necessary conditions for hamiltonicity. In
Section~\ref{sec5}, we consider situations where~$P_1$ does not
imply~$P_2$, but the best degree condition for~$P_1$ implies the best
degree condition for~$P_2$. Such results may be considered improvements in
a degree sense over what is true structurally.

\setcounter{equation}{0}
\section{Framework for Best Monotone Degree Conditions}
\label{sec2}

A graph property $P$ is \emph{increasing} (\emph{decreasing}) if whenever a
graph~$G$ has~$P$, so does every edge-augmented supergraph (edge-deleted
subgraph) of~$G$. Thus `hamiltonian' and `$k$-connected' are increasing
properties, while `$k$-colorable' is a decreasing property. In the rest of
this section, we assume~$P$ is an increasing graph property; a completely
analogous development can be given for decreasing graph properties.

Given a graph property~$P$, consider a theorem~$T$ which declares certain
graphical sequences forcibly~$P$, rendering no decision on the remaining
graphical sequences. Such a theorem~$T$ is called a (\emph{forcibly})
$P$-theorem. Thus Theorem~\ref{thm:11} is a hamiltonian theorem.

A $P$-theorem~$T$ is \emph{monotone} if whenever~$T$ declares a graphical
sequence~$\pi$ forcibly~$P$, $T$ declares every graphical $\pi'\ge\pi$
forcibly~$P$.

A $P$-theorem~$T$ is \emph{$P$-optimal} (or \emph{optimal}, if~$P$ is
understood) if every graphical sequence which~$T$ does not declare
forcibly~$P$ is not forcibly~$P$. Note that Theorem~\ref{thm:11} is not
optimal in this sense; e.g., Theorem~\ref{thm:11} does not declare
$\pi=(2k)^{4k+1}$ forcibly hamiltonian for any $k\ge1$, but all such~$\pi$
are forcibly hamiltonian~\cite{N71}.

A $P$-theorem~$T$ is \emph{$P$-weakly-optimal} (or \emph{weakly optimal},
if~$P$ is understood) if every graphical sequence which~$T$ does not
declare forcibly~$P$ is majorized by a graphical sequence that is not
forcibly~$P$. As noted in the previous section, Theorem~\ref{thm:11} is
weakly optimal in this sense.

A monotone $P$-theorem that is also $P$-weakly-optimal is a `best' monotone
$P$-theorem in the following sense.

\begin{theorem}\label{thm:21}\mbox{}\\*
  Let $T, T_0$ be monotone $P$-theorems, and let~$T_0$ be
  $P$-weakly-optimal. Then any graphical sequence declared forcibly~$P$
  by~$T$ is also declared forcibly~$P$ by~$T_0$.
\end{theorem}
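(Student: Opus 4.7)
The plan is to prove Theorem~\ref{thm:21} by a short contradiction argument that essentially just unpacks the three definitions (monotone, weakly optimal, $P$-theorem).

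Suppose $\pi$ is a graphical sequence declared forcibly~$P$ by~$T$, and suppose for contradiction that $T_0$ does \emph{not} declare~$\pi$ forcibly~$P$. The first step is to apply the $P$-weak-optimality of~$T_0$: this gives a graphical sequence $\pi^*\ge\pi$ such that $\pi^*$ is not forcibly~$P$, i.e., some realization $G^*$ of~$\pi^*$ fails to have property~$P$.

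The second step is to apply the monotonicity of~$T$. Since $T$ declares~$\pi$ forcibly~$P$ and $\pi^*\ge\pi$, monotonicity of~$T$ forces $T$ to declare~$\pi^*$ forcibly~$P$ as well. But $T$ is a (correct) $P$-theorem, so any sequence~$T$ declares forcibly~$P$ must actually be forcibly~$P$; in particular, every realization of~$\pi^*$ has property~$P$. This contradicts the existence of $G^*$ found in the previous step, completing the proof.

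The main (and only) thing to be careful about is the implicit convention that a ``$P$-theorem'' is correct on the sequences it declares, so that a declaration of forcibly~$P$ by~$T$ is a genuine assertion of being forcibly~$P$ -- without this, monotonicity of~$T$ alone would not yield the desired contradiction. Beyond that, there is no real obstacle: the result is a formal consequence of the three definitions, and the argument does not use any structural information about the property~$P$ beyond the fact that it is increasing (or, by the analogous dual argument, decreasing).
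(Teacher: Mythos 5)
Your proof is correct and follows exactly the same contradiction argument as the paper: apply weak optimality of $T_0$ to obtain $\pi^*\ge\pi$ that is not forcibly~$P$, then apply monotonicity of~$T$ to force $T$ to declare $\pi^*$ forcibly~$P$. Your only addition is to spell out explicitly the step the paper leaves implicit --- that a $P$-theorem is correct on the sequences it declares, which is what makes the final contradiction genuine --- and that is a reasonable clarification rather than a departure.
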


\begin{proof} Suppose to the contrary that~$T$ declares some graphical
  sequence~$\pi$ forcibly~$P$, but~$T_0$ does not. Since~$T_0$ is
  $P$-weakly-optimal, there exists a graphical $\pi'\ge\pi$ which is not
  forcibly~$P$. But since~$T$ is monotone and declares~$\pi$ forcibly~$P$,
  $T$ must declare $\pi'\ge\pi$ forcibly~$P$, a contradiction.
\end{proof}

\noindent
Since Theorem~\ref{thm:11} is monotone and weakly optimal,
Theorem~\ref{thm:11} is a best monotone hamiltonian theorem. Similarly,
Theorem~\ref{thm:12} is a best monotone $k$-connected theorem.

By Theorem~\ref{thm:21}, all best monotone $P$-theorems declare the same
set of graphical sequences forcibly~$P$; we denote this set of graphical
sequences by $\BM(P)$. So in terms of their effect, all best monotone
$P$-theorems are equivalent.

In the following three paragraphs, we describe a generic way to construct
-- at least in principle -- best monotone $P$-theorems. Consider the
partially-ordered set~$G_n$ whose elements are the graphical sequences of
length~$n$, and whose partial-order relation is degree majorization. The
graphical sequences of length~$n$ that are not forcibly~$P$ induce a
subposet of~$G_n$, denoted~$\overline{P_n}$. A maximal element
in~$\overline{P_n}$ is called a \emph{$(P,n)$-sink}. The set of all
$(P,n)$-sinks will be denoted $S(P,n)$.

Given a graphical sequence $\pi=(a_1\le\cdots\le a_n)$, note that~$\pi$
fails to satisfy the degree condition~$C(\pi)$ defined by
\[C(\pi):\quad d_1\ge a_1+1\;\vee\;\cdots\;\vee\;d_n\ge a_n +1;\]
indeed, $C(\pi)$ is the weakest monotone degree condition which
`blocks'~$\pi$ (i.e., which~$\pi$ fails to satisfy). We call~$C(\pi)$ the
\emph{Chv\'{a}tal-type condition} for $\pi$. In the sequel, we will usually
write~$C(\pi)$ in the more traditional form
\[d_1\le a_1\;\wedge\;\cdots\;\wedge\;d_{j-1}\le a_{j-1}\;\Rightarrow\;
d_j\ge a_j+1\;\vee\;\cdots\;\vee\;d_n\ge a_n +1,\]
for some $j<n$.

If $\pi\in\overline{P_n}$, then by definition there exists $\pi'\in S(P,n)$
majorizing~$\pi$, and thus~$\pi$ fails to satisfy~$C(\pi')$. Put
differently, if a graphical $n$-sequence $\pi$ satisfies the degree
condition $\bigwedge_{\pi\in S(P,n)}C(\pi)$, then~$\pi$ is forcibly~$P$;
i.e., the theorem~$T$ with degree condition
$\bigwedge_{\pi\in S(P,n)}C(\pi)$ is a forcibly $P$-theorem. But
certainly~$T$ is monotone, and~$T$ is also $P$-weakly-optimal (if~$\pi$
fails to satisfy the degree condition of~$T$, then~$\pi$ is majorized by
some $\pi'\in S(P,n)\subseteq\overline{P_n}$ which is not forcibly~$P$).
Thus~$T$ is a best monotone $P$-theorem.

In practical terms, it may be almost impossible to identify the precise set
of sinks $S(P,n)$. Fortunately, it is not necessary to make this precise
identification to get a best monotone \mbox{$P$-theorem}: If one can merely
identify a set~$A$ of non-$P$ graphs on~$n$ vertices whose set of degree
sequences $\prod(A)$ contains all of $S(P,n)$ (so
$S(P,n)\subseteq\prod(A)\subseteq\overline{P_n}$), then -- as above -- the
theorem with degree condition $\bigwedge_{\pi\in\prod(A)}C(\pi)$ will also
be a best monotone $P$-theorem. Although it is typically difficult to find
even such a set~$A$, we will see in the following sections that this is
possible for a remarkable number of graph properties.

Finally, we note that $|S(P,n)|$ may be considered the `inherent
complexity' of a best monotone theorem on~$n$ vertices. More precisely, we
have the following.

\begin{theorem}\label{thm:22}\mbox{}\\*
  When the degree condition of a best monotone $P$-theorem on~$n$ vertices
  is expressed as a conjunction $\bigwedge C(\pi)$ of $P$-weakly-optimal
  Chv\'{a}tal-type conditions, the conjunction must contain at least
  $|S(P,n)|$ such conditions.
\end{theorem}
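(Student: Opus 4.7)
The plan is to exhibit an injection $S(P,n)\hookrightarrow A$, where the degree condition of~$T$ is written as $\bigwedge_{\pi\in A}C(\pi)$; in fact the injection will simply be the identity, since I will argue that every sink $\sigma\in S(P,n)$ must itself occur as one of the subscripts~$\pi$.

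First I would record the elementary fact that a graphical sequence $\pi^{*}$ fails the Chv\'{a}tal-type condition $C(\pi)$ precisely when $\pi^{*}\le\pi$ in the majorization order. Hence $\pi^{*}$ is \emph{not} declared forcibly~$P$ by~$T$ if and only if $\pi^{*}\le\pi$ for some $\pi\in A$.

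Next I would verify that $A\subseteq\overline{P_n}$, using the weak-optimality hypothesis on each individual $C(\pi)$. Viewed on its own, the theorem with single hypothesis $C(\pi)$ blocks exactly the sequences majorized by~$\pi$; its weak optimality therefore demands that $\pi$ itself be majorized by some non-forcibly-$P$ sequence. The standard fact that, for an increasing property, the forcibly-$P$ graphical sequences form an up-set under majorization then forces $\pi$ itself to lie in $\overline{P_n}$.

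Finally, I would fix a sink $\sigma\in S(P,n)$. Since $\sigma\in\overline{P_n}$ and~$T$ is a $P$-theorem, $T$ does not declare~$\sigma$ forcibly~$P$, so $\sigma$ must fail some $C(\pi)$ with $\pi\in A$; equivalently, $\sigma\le\pi$. Combining $\pi\in A\subseteq\overline{P_n}$ with the maximality of~$\sigma$ in $\overline{P_n}$ forces $\pi=\sigma$, whence $\sigma\in A$. This yields $S(P,n)\subseteq A$ and therefore $|A|\ge|S(P,n)|$.

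The only mildly delicate step is the claim $A\subseteq\overline{P_n}$, which ultimately reduces to upward-closure of the forcibly-$P$ sequences under majorization for an increasing property. Once that is accepted, the rest is a direct unrolling of the definitions: $C(\pi)$ cuts off exactly the down-set generated by~$\pi$, and the maximal non-forcibly-$P$ sequences can only be cut off from above by themselves.
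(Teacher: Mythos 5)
Your overall strategy --- show that every sink must itself appear as one of the conjuncts, because distinct sinks are incomparable under majorization --- is essentially the paper's argument (the paper phrases it contrapositively: no sink violates the Chv\'atal-type condition of another sink). However, your justification of the intermediate claim $A\subseteq\overline{P_n}$ rests on a ``standard fact'' that is false: for an increasing property~$P$, the forcibly-$P$ sequences do \emph{not} form an up-set under majorization. For example, $(2,2,2,2,2)$ is forcibly hamiltonian (its only realization is $C_5$), yet it is majorized by $(2,2,2,3,3)$, which has the nonhamiltonian realization $K_{2,3}$. Indeed, if your ``standard fact'' were true, ``weakly optimal'' would coincide with ``optimal,'' whereas the paper explicitly notes (via $(2k)^{4k+1}$) that Theorem~\ref{thm:11} is weakly optimal but not optimal. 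Moreover, the claim $A\subseteq\overline{P_n}$ can itself fail: the conjunction may harmlessly include a redundant condition $C(\pi_0)$ with $\pi_0$ forcibly~$P$ but $\pi_0\le\sigma_0$ for some sink $\sigma_0$ (e.g.\ $C((2,2,2,2,2))$ in the example above); such a $C(\pi_0)$ is still $P$-weakly-optimal because every sequence it blocks is majorized by the non-forcibly-$P$ sequence $\sigma_0$.

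Fortunately the error is local and repairable without changing your plan. Weak optimality of the single condition $C(\pi)$, applied to the sequence $\pi$ itself (which fails $C(\pi)$), directly yields some $\rho\in\overline{P_n}$ with $\pi\le\rho$; you do not need $\pi\in\overline{P_n}$. Then for a sink $\sigma$ with $\sigma\le\pi$ you get $\sigma\le\pi\le\rho$ with $\rho\in\overline{P_n}$, and maximality of~$\sigma$ in $\overline{P_n}$ forces $\rho=\sigma$, hence $\pi=\sigma$ by antisymmetry. With that substitution your proof goes through and coincides in substance with the paper's.
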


\begin{proof} It suffices to show that any $\pi \in S(P,n)$ satisfies
  $\bigwedge_{\pi'\in S(P,n)-\{\pi\}}C(\pi')$; for then the conjunction
  must contain each Chv\'{a}tal-type condition~$C(\pi)$, as~$\pi$ ranges
  over $S(P,n)$. Suppose to the contrary that some sink
  $\pi_a=(a_1\le\cdots\le a_n)$ violates $C(\pi_b)$, where
  $\pi_b=(b_1\le\cdots\le b_n)$ is another sink. Then $a_i\le b_i$, for
  $1\le i\le n$, and so $\pi_a\le\pi_b$, contradicting the assumption
  that~$\pi_a$ is a sink.
\end{proof}

\setcounter{section}{2}
\setcounter{equation}{0}
\renewcommand{\thetheorem}{\arabic{section}.\arabic{subsection}.\arabic{theorem}}
\section{Best Monotone Conditions for Graph Properties \boldmath$P$}
\label{sec3}

\subsection{Edge-Connectivity}\label{sec3.1}

We noted in Section~\ref{sec1} that Bondy~\cite{B69} (see also
Boesch~\cite{B74}) gave a best monotone condition for
$k$-vertex-connectedness (Theorem~\ref{thm:12}). While Theorem~\ref{thm:12}
is also a sufficient condition for $k$-edge-connectedness, it is not a best
monotone condition when $k\ge2$.

A best monotone condition for $2$-edge-connectedness was given
in~\cite{BHKS09}.

\begin{theorem}[(Bauer et al.\ \cite{BHKS09})]\label{thm:311}\mbox{}\\*
  Let $\pi = (d_1\le\cdots\le d_n)$ be a graphical sequence. If
  \begin{subequations}
    \begin{gather}
      d_1\ge2;\label{eq:311}\\
      d_i-1\le i-1\;\wedge\;d_i\le i\; \Rightarrow\; d_{n-1}\ge n-i\;
      \vee\; d_n\ge n-i+1,\quad
      \text{for $3\le i<\tfrac12n$};\label{eq:312}\\
      d_{n/2}\le\tfrac12n-1\; \Rightarrow\; d_{n-2}\ge\tfrac12n\; \vee\;
      d_n\ge\tfrac12n+1,\quad \text{if~$n$ is even},\label{eq:313}
    \end{gather}
  \end{subequations}
  then~$\pi$ is forcibly $2$-edge-connected.
\end{theorem}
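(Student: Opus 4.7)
The plan is to prove the contrapositive: assume that $\pi$ has a realization $G$ that is not $2$-edge-connected, and derive that $\pi$ violates one of \eqref{eq:311}--\eqref{eq:313}. Condition \eqref{eq:311} disposes of the case $\delta(G) \le 1$ immediately, so assume $\delta(G) \ge 2$. A graph with minimum degree at least~$2$ fails to be $2$-edge-connected exactly when its vertex set partitions into two nonempty disjoint sets $A$ and $B$ with at most one edge between them: either $G$ is disconnected with $A$ and $B$ unions of components, or $G$ is connected and $A, B$ are the two sides of a bridge~$uv$. The hypothesis $\delta(G) \ge 2$ then forces $|A|, |B| \ge 3$, since a component or bridge-side of size at most~$2$ would contain a vertex of degree at most~$1$.

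Set $a = \min(|A|, |B|) \ge 3$. The core step is to convert this partition into degree constraints on~$\pi$. On the smaller side, the disconnected subcase supplies $a$ vertices of degree at most $a - 1$, while the bridge subcase supplies $a - 1$ such vertices plus the bridge-endpoint of degree at most~$a$; in both subcases one obtains $d_{a-1} \le a - 1$ and $d_a \le a$. A symmetric count on the larger side yields $d_{n-1} \le n - a - 1$ and $d_n \le n - a$ when $a < n/2$, and $d_{n-2} \le n/2 - 1$ and $d_n \le n/2$ when $a = n/2$ (which forces $n$ to be even, with the two bridge endpoints---one on each side---accounting for the gap between $d_{n-2}$ and $d_n$).

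When $3 \le a < n/2$, these bounds satisfy the hypothesis of \eqref{eq:312} at $i = a$ but violate its conclusion $d_{n-1} \ge n - a$ or $d_n \ge n - a + 1$, so $\pi$ violates \eqref{eq:312}. When $a = n/2$, the bounds satisfy the hypothesis of \eqref{eq:313} but violate its conclusion $d_{n-2} \ge n/2$ or $d_n \ge n/2 + 1$, so $\pi$ violates \eqref{eq:313}. Either way, $\pi$ fails one of \eqref{eq:311}--\eqref{eq:313}, completing the contrapositive.

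The main obstacle, and the reason the statement looks more intricate than a standard Chv\'atal-type condition, is the bookkeeping that unifies the bridge and disconnected subcases. The bridge case is strictly weaker than the disconnected case on each side, allowing one extra vertex (the bridge endpoint) whose degree can be larger by~$1$. This single ``$+1$'' on each side is precisely what forces \eqref{eq:312} to use a two-index hypothesis rather than the single condition $d_i \le i$, and what forces \eqref{eq:313} to reference $d_{n-2}$ in place of $d_{n-1}$. Organizing the argument around the partition into $A$ and $B$, rather than treating the bridge and disconnected subcases in isolation, is what keeps the proof compact and aligns it with the $S(P,n)$-based framework of Section~\ref{sec2}: the sinks we are implicitly blocking are exactly the degree sequences of $K_a \cup K_{n-a}$ and of $K_a \cup K_{n-a}$ joined by a single edge.
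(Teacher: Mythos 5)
Your proof is correct, and it is the natural argument: the contrapositive via a partition $(A,B)$ with at most one crossing edge, the observation that $\delta(G)\ge 2$ forces $\min(|A|,|B|)=a\ge 3$, and the degree counts $d_{a-1}\le a-1$, $d_a\le a$, $d_{n-1}\le n-a-1$, $d_n\le n-a$ (with the $a=n/2$ adjustment to $d_{n-2}$) are all accurate, including your correct reading of the misprinted hypothesis ``$d_i-1\le i-1$'' as $d_{i-1}\le i-1$. Note that the survey itself gives no proof of sufficiency for this theorem --- it states it with a citation to Bauer et al.\ and only verifies weak optimality via the graphs $G(n,i)$ --- so there is no in-paper argument to compare against; your counting is exactly the converse of that optimality discussion, with the $G(n,i)$ realizing the tight cases of your bounds.
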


\noindent
For the weak optimality of Theorem~\ref{thm:311}, let $G(n,i)$, $i\ge1$,
denote disjoint cliques~$K_i$ and $K_{n-i}$ joined by a single edge.
If~$\pi$ fails to satisfy~\eqref{eq:311}, then~$\pi$ is majorized by the
degrees of $G(n,1)$. If~$\pi$ fails to satisfy~\eqref{eq:312} for some~$i$,
then~$\pi$ is majorized by the degrees of $G(n,i)$. If~$\pi$ fails to
satisfy~\eqref{eq:313}, then~$\pi$ is majorized by the degrees of
$G(n,n/2)$. Since none of these graphs is $2$-edge-connected,
Theorem~\ref{thm:311} is weakly optimal.

Kriesell~\cite{K07} and Yin and Guo~\cite{HGTA} independently established a
best monotone condition for $3$-edge-connectedness, which had been
conjectured in \cite{BHKS09}.

\begin{theorem}[(Kriesell~\cite{K07}, Yin \& Guo~\cite{HGTA})]
  \label{thm:312}\mbox{}\\*
  Let $\pi=(d_1\le\cdots\le d_n)$ be a graphical sequence. If
  \begin{subequations}
    \begin{gather}
      d_1\ge3;\label{eq:314}\\
      d_{i-2}\le i-1\;\wedge\;d_i\le i\; \Rightarrow\; d_{n-2}\ge n-i\;
      \vee\; d_n\ge n-i+1,\quad \text{for $4\le i<\tfrac12n$};
      \label{eq:315}\\
      d_{i-1}\le i-1\;\wedge\;d_i\le i+1\; \Rightarrow\;
      d_{n-2}\ge n-i\;\vee\; d_n\ge n-i+1,\quad
      \text{for $4\le i<\tfrac12(n-1)$};\label{eq:316}\\
      d_{i-2}\le i-1\;\wedge\;d_i\le i\; \Rightarrow\; d_{n-1}\ge n-i\;
      \vee\; d_n\ge n-i+2,\quad \text{for $4\le i<\tfrac12n$};
      \label{eq:317}\\
      d_{n/2}\le\tfrac12n-1\; \Rightarrow\; d_{n-4}\ge\tfrac12n\; \vee\;
      d_n\ge\tfrac12n+1,\quad \text{if $n$ is even};\label{eq:318}\\
      d_{(n-3)/2}\le\tfrac12(n-3)\; \Rightarrow\; d_{n-3}\ge\tfrac12(n+1)\;
      \vee\; d_n\ge\tfrac12(n+3),\quad \text{if $n$ is odd};
      \label{eq:319}\\
      d_{n/2}\le\tfrac12n-1\; \Rightarrow\; d_{n-3}\ge\tfrac12n\; \vee\;
      d_{n-1}\ge\tfrac12n+1\; \vee\; d_n\ge\tfrac12n+2,\quad
      \text{if $n$ is even},\label{eq:3110}
    \end{gather}
  \end{subequations}
  then~$\pi$ is forcibly $3$-edge-connected.
\end{theorem}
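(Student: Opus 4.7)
The approach follows the two-step schema used for Theorem~\ref{thm:311}: establish weak optimality by exhibiting, for each of the seven Chv\'atal-type conditions, a non-$3$-edge-connected graph whose degree sequence is the maximal sequence violating that condition; then prove sufficiency by assuming $\pi$ satisfies all the conditions yet some realization $G$ has edge-connectivity at most~$2$, and deriving a contradiction from a case analysis of a minimum edge cut. Monotonicity is immediate from the form of the conditions.

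For weak optimality, I would introduce a family of auxiliary graphs built from $K_i\cup K_{n-i}$ by adding at most two extra edges between the cliques. When two edges are added, there are three essentially different configurations: four distinct endpoints, a shared endpoint in $K_i$, or a shared endpoint in $K_{n-i}$. These three configurations produce respectively the extremal sequences blocked by \eqref{eq:315}, \eqref{eq:316}, \eqref{eq:317}: the first gives two vertices of degree~$i$ on the small side and two of degree~$n-i$ on the large side; the second replaces ``two of degree~$i$'' by ``one of degree~$i+1$'' on the small side; the third replaces ``two of degree~$n-i$'' by ``one of degree~$n-i+1$'' on the large side. Condition \eqref{eq:314} is blocked by a pendant configuration, and the balanced analogues with $i=n/2$ (for $n$ even) or $i=(n-1)/2$ (for $n$ odd) produce the extremal sequences for \eqref{eq:318}--\eqref{eq:3110}. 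Verifying that a $\pi$ violating a given condition is majorized by the corresponding graph's degree sequence is a direct blockwise comparison.

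For sufficiency, suppose $\pi$ satisfies \eqref{eq:314}--\eqref{eq:3110} but some realization $G$ has a minimum edge cut $[S,\overline{S}]$ with $e:=|[S,\overline{S}]|\le 2$. By \eqref{eq:314}, $\delta(G)\ge 3$, so the degree-sum estimate $i(i-1)+e\ge 3i$ immediately rules out $i\in\{1,2,3\}$ and forces $i:=|S|\ge 4$. For $i\ge 4$, each vertex of $S$ has degree at most $i-1$ unless it is a cut endpoint, in which case its degree is at most $(i-1)+(\text{edges from it to }\overline{S})$; analogous bounds hold on $\overline{S}$. When $i<n/2$, these bounds read off directly as the hypothesis of exactly one of \eqref{eq:315} (all four cut endpoints distinct), \eqref{eq:316} (shared endpoint in $S$), or \eqref{eq:317} (shared endpoint in $\overline{S}$), while the $\overline{S}$-side bounds simultaneously violate the conclusion of the same condition. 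The cases $e\in\{0,1\}$ are strictly stronger and also fall under \eqref{eq:315}.

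The main obstacle is the balanced case $i=n/2$ (for $n$ even) or $i=(n-1)/2$ (for $n$ odd): here $|S|$ and $|\overline{S}|$ are (nearly) equal, so the per-side degree bounds contribute to overlapping positions of the sorted sequence $\pi$ and must be interleaved rather than concatenated. Because the cut endpoints may be distributed with both sides disjoint, or with a shared endpoint on one side and disjoint on the other, the resulting Chv\'atal-type conditions gain additional disjunctive conclusions---explaining why \eqref{eq:3110} carries three disjuncts while \eqref{eq:318} has two, and why a separate odd-$n$ condition \eqref{eq:319} is needed to handle the $|S|=(n-1)/2$, $|\overline{S}|=(n+1)/2$ split. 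Matching each balanced cut configuration to exactly one of \eqref{eq:318}, \eqref{eq:319}, \eqref{eq:3110}, and verifying that the conclusions are the tightest the configuration admits, is the technical heart of the proof and accounts for why this theorem requires seven conditions where Theorem~\ref{thm:311} needs only three.
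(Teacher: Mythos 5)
The paper does not prove Theorem~\ref{thm:312}: it is quoted from Kriesell and from Yin and Guo, and unlike Theorem~\ref{thm:311} the survey does not even record the weak-optimality graphs for it, so there is no in-paper argument to compare yours against. That said, your plan is the natural reconstruction of the external proof, and I find no error in its architecture: the extremal graphs you describe (two cliques joined by at most two edges, in the three possible endpoint configurations, specialized to the balanced splits) do realize the maximal sequences blocked by each of \eqref{eq:314}--\eqref{eq:3110}, and the sufficiency argument via a minimum edge cut $[S,\overline S]$ with $i:=|S|\ge 4$ is sound, including the degree-sum step that eliminates $i\le 3$.

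Two points deserve more care than your sketch gives them. First, the claim that the per-side degree bounds ``read off directly'' as the hypothesis and the negated conclusion of one of \eqref{eq:315}--\eqref{eq:317} requires actually merging the bounds from $S$ and from $\overline S$ into the single sorted sequence $\pi$ (e.g.\ checking that $d_{i-2}\le i-1$ and $d_i\le i$ really hold, and that $d_{n-2}\le n-i-1$ and $d_n\le n-i$ really fail the conclusion); this is routine when $i<\tfrac12 n$, but it is exactly where the work lies and you defer it, and the same interleaving issue is what you correctly flag as the technical heart in the balanced cases. Second, your accounting of the odd balanced case is slightly off: for $n$ odd and $i=(n-1)/2$ we still have $i<\tfrac12 n$, so \eqref{eq:315} and \eqref{eq:317} continue to cover the ``four distinct endpoints'' and ``shared endpoint in $\overline S$'' configurations at that split; only the ``shared endpoint in $S$'' configuration escapes, because the range of \eqref{eq:316} stops at $i<\tfrac12(n-1)$, and that single configuration is what \eqref{eq:319} exists for. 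Also, the extremal graph for \eqref{eq:314} is not a pendant vertex but a vertex of degree two joined to $K_{n-1}$ (the $i=1$ member of your family with both cut edges sharing the small-side endpoint). As a proposal the strategy is right; as a proof it still owes the case-by-case verification it promises.
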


\noindent
The increase in the number of conditions in Theorem~\ref{thm:312} when
$k=3$, compared to Theorem~\ref{thm:311} when $k=2$, is notable. Indeed, we
now prove that the number of weakly optimal Chv\'{a}tal-type conditions in
a best monotone condition for $k$-edge-connectedness grows
superpolynomially in~$k$, for~$n$ sufficiently large. A more involved proof
of this was given previously by Kriesell~\cite{K07}.

By Theorem~\ref{thm:22}, it suffices to prove the following.

\begin{theorem}\label{thm:313}\mbox{}\\*
  Let $k\ge2$, and let $n\ge4k-2$ be an even integer. Then there are at
  least $p(k-1)$ $k$-edge-connected sinks in~$G_n$, where~$p$ denotes the
  integer partition function, so that
  $p(r)\sim\dfrac{1}{4\sqrt{3}r}e^{\pi\sqrt{\frac{2r}{3}}}$~\cite{HR18}.
\end{theorem}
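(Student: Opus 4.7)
The plan is to show $|S(P,n)| \ge p(k-1)$, where $P$ denotes ``$k$-edge-connected'', by exhibiting a distinct sink in $\overline{P_n}$ for each integer partition $\lambda$ of $k-1$. Since there are $p(k-1)$ such partitions, the theorem will follow.

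Let $m=n/2 \ge 2k-1$. For a partition $\lambda=(\lambda_1 \ge \cdots \ge \lambda_s \ge 1)$ of $k-1$, I would define $G_\lambda$ as the graph on $n$ vertices consisting of two disjoint cliques $A,B$ of size $m$, joined by $k-1$ cross edges as follows: choose distinct $a_1,\dots,a_s \in A$ and distinct $b_1,\dots,b_{k-1} \in B$ (valid since $m \ge k-1$), and make $a_i$ adjacent to $\lambda_i$ of the $b_j$'s, disjointly. The cut $[A,B]$ has size $k-1$, so $G_\lambda$ is not $k$-edge-connected; let $\pi_\lambda$ be its degree sequence. The multiplicity of degree $m+j-1$ in $\pi_\lambda$ is $|\{i:\lambda_i=j\}|$ for $j \ge 2$ and $(k-1)+|\{i:\lambda_i=1\}|$ for $j=1$, so $\lambda$ is recoverable from $\pi_\lambda$; hence distinct partitions give distinct $\pi_\lambda$.

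It remains to verify each $\pi_\lambda$ is a sink. Suppose a graphical $\pi'>\pi_\lambda$ had a realization $G'$ with an edge-cut $[S,\bar S]$ of size $\le k-1$; I would derive a contradiction. Take $|S|=m-t$ with $t \ge 0$. Since $\min(\pi') \ge m-1$, yet every $v \in S$ satisfies $d_{G'}(v) \le (m-t-1)+(k-1)$, we have $t \le k-1$. Comparing the sum of degrees in $S$ with the sum of the $m-t$ smallest degrees of $\pi'$ gives
\[ (m-t)(m-1) \;\le\; \sum_{v \in S}d_{G'}(v) \;\le\; 2\binom{m-t}{2}+(k-1), \]
where the left-hand estimate uses that, by $m \ge 2k-1$ and $s \le k-1$, the $m-t$ smallest degrees of $\pi_\lambda$ (and hence of $\pi'$) all equal $m-1$. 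This simplifies to $t(m-t) \le k-1$.

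The heart of the argument is this final inequality. If $t \ge 1$ then $m-t \ge k$, so $t(m-t) \ge k > k-1$, a contradiction; hence $t=0$. But then $|E(G')| \le 2\binom{m}{2}+(k-1)=|E(G_\lambda)|$, forcing the degree sum of $\pi'$ to be at most that of $\pi_\lambda$, which contradicts $\pi'>\pi_\lambda$. Thus every $\pi'>\pi_\lambda$ is forcibly $k$-edge-connected, so $\pi_\lambda$ is a sink. This yields $p(k-1)$ distinct sinks in $\overline{P_n}$, completing the proof. The main obstacle is the cut-size case analysis, and the hypothesis $n \ge 4k-2$ is used precisely to rule out the case $t \ge 1$.
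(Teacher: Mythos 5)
Your proof is correct and follows essentially the same route as the paper: the same family of graphs (two copies of $K_{n/2}$ joined by $k-1$ edges distributed according to a partition of $k-1$), shown to be sinks by comparing the degrees on the small side of a hypothetical cut against $\delta(G_\lambda)=\tfrac12 n-1$ and using $n\ge 4k-2$. The only cosmetic differences are that the paper first passes to an edge-maximal non-$k$-edge-connected majorant (two cliques joined by exactly $k-1$ edges) and exhibits a single low-degree vertex where you use a degree-sum count over an arbitrary cut, and that you make explicit the distinctness of the sequences $\pi_\lambda$, which the paper leaves implicit.
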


\begin{proof}[Proof of Theorem~\ref{thm:313}] Construct a family of
  $p(k-1)$ edge-maximal not $k$-edge-connected graphs on~$n$ vertices as
  follows: Begin with disjoint copies $X,Y$ of $K_{n/2}$. Let
  $a_1+a_2+\cdots+a_j$ be any partition of $k-1$, and choose vertices
  $x_1,x_2,\ldots,x_j\in X$. Add $k-1$ edges between~$X$ and~$Y$ so
  that~$a_i$ of these edges are incident at $x_i\in X$, for $1\le i\le j$,
  and the edges are incident to $k-1$ distinct vertices in~$Y$. Call the
  resulting graph $G(a_1,\ldots,a_j)$, noting that it has minimum degree
  $\delta(G(a_1,\ldots, a_j))=\tfrac12n-1$.

  To complete the proof, it suffices to show

  \medskip
  \noindent
  \textbf{Claim}. $\pi(G(a_1,\ldots,a_j))$ is a $k$-edge-connected sink
  in~$G_n$.

  \medskip
  \noindent
  \emph{Proof of the Claim.} Let $G=G(a_1,\ldots,a_j)$. Suppose to the
  contrary that~$\pi(G)$ is majorized by $\pi(H)\neq\pi(G)$, where~$H$ is
  an edge-maximal not $k$-edge-connected graph, necessarily consisting of
  two disjoint cliques $X,Y$ such that $|X|+|Y|=n$ and $|E(X,Y)|= k-1$. We
  may assume $|X|>|Y|$, so that $|Y|<\tfrac12n$ (if $|X|=|Y|$,
  then~$\pi(G)$ and~$\pi(H)$ have the same degree sum and~$\pi(H)$ could
  not majorize~$\pi(G)$).

  We consider two cases.

  \smallskip
  \noindent
  \textbf{Case 1}. $|Y|\ge k$

  \noindent
  Since $|Y|>k-1=|E(X,Y)|$, some vertex $y\in Y$ is not incident to any
  edge in $E(X,Y)$. So $d_H(y)\le|Y|-1<\tfrac12n-1=\delta(G)$, and~$\pi(H)$
  would not majorize~$\pi(G)$.

  \smallskip
  \noindent
  \textbf{Case 2}. $|Y|\le k-1$

  \noindent
  Then any $y\in Y$ satisfies
  $d_H(y)\le|E(X,Y)|+(|Y|-1)\le(k-1)+(k-2)=2k-3= \tfrac12(4k)-3\le
  \tfrac12(n+2)-3=\tfrac12n-2<\tfrac12n-1=\delta(G)$. Again, $\pi$ would
  not majorize~$\pi(G)$.

  \medskip
  This proves the Claim, and completes the proof of Theorem~\ref{thm:313}.
\end{proof}

\noindent
In light of Theorem~\ref{thm:313}, it would be desirable to have a simple,
though not best monotone, condition for $k$-edge-connectedness that is at
least better than Theorem~\ref{thm:12} as a sufficient condition for
$k$-edge-connectedness. The following such condition was given
in~\cite{BHKS09}.

\begin{theorem}[(Bauer et al.\ \cite{BHKS09})]
  \label{thm:314}\mbox{}\\*
  Let $\pi=(d_1\le\cdots\le d_n)$ be a graphical sequence and let $k\ge1$
  be an integer. If
  \begin{subequations}
    \begin{gather}
      d_1\ge k;\label{eq:3111}\\
      d_{i-k+1}\le i-1\; \wedge\; d_i\le i+k-2\; \Rightarrow\;
      d_n\ge n-i+k-1,\quad
      \text{for $k+1\le i\le\bigl\lfloor{\tfrac12n}\bigr\rfloor$},
      \label{eq:3112}
    \end{gather}
  \end{subequations}
  then~$\pi$ is forcibly $k$-edge-connected.
\end{theorem}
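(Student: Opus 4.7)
The plan is to argue by contradiction. Suppose $\pi$ satisfies conditions~\eqref{eq:3111} and~\eqref{eq:3112} but admits a realization $G$ that is not $k$-edge-connected. Then $V(G)$ partitions into nonempty sets $X,Y$ with $|E(X,Y)|\le k-1$; I would choose the partition so that $|X|\le|Y|$ and set $i:=|X|$, so that $i\le\lfloor n/2\rfloor$. The goal is to verify the antecedent of the implication in~\eqref{eq:3112} at index~$i$, apply~\eqref{eq:3112} to obtain a lower bound on $d_n$, and then contradict that bound with an upper bound on $d_n$ coming from the cut structure.

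First I would place $i$ in the range where~\eqref{eq:3112} applies by showing $i\ge k+1$. Summing degrees over $X$ and using~\eqref{eq:3111} gives
\[
k\,i \;\le\; \sum_{x\in X}d_G(x) \;=\; 2|E(X)| + |E(X,Y)| \;\le\; i(i-1) + (k-1),
\]
which rearranges to $(i-1)(i-k)\ge 1$ and hence (since $i\ge 1$) forces $i\ge k+1$. Next I would establish the two inequalities in the antecedent of~\eqref{eq:3112}. Writing $e_x$ for the number of edges from $x\in X$ to $Y$, we have $d_G(x)\le(i-1)+e_x\le i+k-2$ for each $x\in X$, so the $i$ smallest degrees satisfy $d_i\le i+k-2$. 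Since $\sum_{x\in X}e_x=|E(X,Y)|\le k-1$, at most $k-1$ vertices of $X$ have $e_x\ge 1$, so at least $i-(k-1)=i-k+1$ vertices of $X$ have degree at most $i-1$; hence $d_{i-k+1}\le i-1$.

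Now~\eqref{eq:3112} yields $d_n\ge n-i+k-1$. For the contradiction, every $y\in Y$ satisfies $d_G(y)\le(|Y|-1)+|E(X,Y)|\le(n-i-1)+(k-1)=n-i+k-2$, while every $x\in X$ satisfies $d_G(x)\le i+k-2\le n-i+k-2$ using $i\le n/2$; thus $d_n\le n-i+k-2$, contradicting the previous inequality. The main obstacle is the lower bound $i\ge k+1$: it is exactly what makes~\eqref{eq:3111} (otherwise unused) essential, and it guarantees that the subscript $i-k+1\ge 2$ is meaningful and that $i$ falls in the range where~\eqref{eq:3112} is stated. The remainder of the argument is straightforward degree accounting once the minimum edge cut has been fixed with $|X|\le|Y|$.
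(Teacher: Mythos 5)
Your argument is correct and complete: the degree-sum bound over the small side $X$ of the cut forces $i\ge k+1$, the edge-cut counting gives both halves of the antecedent of~\eqref{eq:3112}, and the resulting bound $d_n\ge n-i+k-1$ contradicts the fact that every vertex on either side of a cut of size at most $k-1$ has degree at most $n-i+k-2$. The survey itself states this theorem without proof (citing \cite{BHKS09}), and your cut-partition degree-accounting is exactly the standard route one would take for it, so there is nothing further to reconcile.
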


\noindent
A sufficient condition for $k$-edge-connectedness stronger than
Theorem~\ref{thm:314} was given by Yin and Guo~\cite{HGTA}, though their
degree condition is substantially more involved than
conditions~\eqref{eq:3111} and~\eqref{eq:3112}. We refer the reader
to~\cite{HGTA} for details.

\setcounter{section}{3}
\setcounter{subsection}{1}
\setcounter{equation}{0}
\setcounter{theorem}{0}
\subsection{Binding Number}\label{sec3.2}

The concept of the binding number of a graph was first used by Anderson
\cite[p.~185]{A71}, and then given its present definition by
Woodall~\cite{W73}.

Given $S\subseteq V(G)$, let $N(S)\subseteq V(G)$ denote the neighbor set
of~$S$. Let
\[\mathcal{S}=\{\,S\subseteq V(G)\mid \text{$S\ne\varnothing$ and
  $N(S)\ne V(G)$}\,\}.\]
The \emph{binding number of~$G$}, denoted $\bind(G)$, is defined by
\[\bind(G)=\min_{S\in\mathcal{S}}\frac{|N(S)|}{|S|}.\]
In particular, $\bind(K_n)=n-1$, for $n\ge1$. A set $S\in\mathcal{S}$ for
which the above minimum is attained is called a \emph{binding set of~$G$}.
For $b\ge0$, we call $G$ \emph{$b$-binding} if $\bind(G)\ge b$.
Cunningham~\cite{C90} has shown that computing $\bind(G)$ is tractable.

A number of theorems in the literature guarantee that a graph~$G$ has a
certain property if $\bind(G)$ is appropriately bounded from below. The
following three theorems exhibit the best possible lower bound on
$\bind(G)$ to guarantee that~$G$ has the indicated property.

\begin{theorem}[(Anderson \cite{A71})]\label{thm:321}\mbox{}\\*
  If $|V(G)|$ is even and $\bind(G)\ge4/3$, then~$G$ contains a $1$-factor.
\end{theorem}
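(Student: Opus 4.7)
The plan is to prove the contrapositive via Tutte's 1-factor theorem. Suppose $G$ has no 1-factor; then there exists $S \subseteq V(G)$ with $o(G-S) > |S|$, and since $|V(G)| \equiv |S| + o(G-S) \pmod{2}$ and $|V(G)|$ is even, parity forces $q := o(G-S) \ge |S| + 2$. Write $s = |S|$, let $C_1, \ldots, C_q$ be the odd components of $G-S$, and let $\alpha$ be the number of them that are singletons (isolated in $G-S$). The strategy is to exhibit a set $T$ with $T \ne \varnothing$, $N(T) \ne V(G)$, and $|N(T)|/|T| < 4/3$, contradicting $\bind(G) \ge 4/3$.

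The first candidate I would try is $T = V(C_1) \cup \cdots \cup V(C_q)$. Since $G-S$ has no edges between distinct components, $N(T) \subseteq S \cup T$, and a vertex $v \in T$ lies in $N(T)$ if and only if its component has size $\ge 2$; hence $|N(T)| \le s + |T| - \alpha$. Combining this with $|T| \ge \alpha + 3(q-\alpha) = 3q - 2\alpha$ and $q \ge s+2$, a short calculation reduces the desired inequality $|N(T)|/|T| < 4/3$ to $3(s-q) < \alpha$, which holds since $3(s-q) \le -6$. Thus, as soon as we know $N(T) \ne V(G)$, we obtain a contradiction; and $N(T) \ne V(G)$ is automatic whenever $G-S$ has an even component (no edges from $T$ reach it) or $\alpha \ge 1$ (a singleton of $T$ has no neighbor in $T$ at all).

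The only remaining case is the degenerate configuration in which $\alpha = 0$, $G-S$ has no even components, and $S \subseteq N(T)$, forcing $N(T) = V(G)$. Here I would switch to $T' = V(C_1) \cup \cdots \cup V(C_{q-1})$, simply dropping one odd component. Then $V(C_q) \cap N(T') = \varnothing$, so $N(T') \ne V(G)$; meanwhile every remaining $|C_i| \ge 3$ gives $|T'| \ge 3(q-1) \ge 3(s+1) > 3s$, while $|N(T')| \le s + |T'|$. Hence $|N(T')|/|T'| \le 1 + s/|T'| < 4/3$, again contradicting $\bind(G) \ge 4/3$.

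The main conceptual obstacle is recognizing that the natural choice $T = \bigcup V(C_i)$ need not be a valid binding candidate, and supplying the substitute $T'$ for the one degenerate configuration where $N(T) = V(G)$. Once Tutte's theorem reduces the hypothesis to a bad set $S$ with $o(G-S) \ge |S|+2$, the remaining steps are routine arithmetic with the parameters $s$, $q$, and $\alpha$.
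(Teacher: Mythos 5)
The paper is a survey and states this result without proof, simply citing Anderson; so there is no in-paper argument to compare against, and your proposal must stand on its own. It does: the deduction from Tutte's theorem is correct. The parity step giving $q\ge s+2$ is right, the bounds $|N(T)|\le s+|T|-\alpha$ and $|T|\ge 3q-2\alpha$ are right, and the reduction of $|N(T)|/|T|<4/3$ to $3(s-q)<\alpha$ checks out. You also correctly handle the one point that is easy to overlook, namely that $T=\bigcup V(C_i)$ is only an admissible binding set when $N(T)\ne V(G)$; your fallback set $T'$ (dropping one odd component) works in the degenerate case because $\alpha=0$ there forces $|T'|\ge 3(q-1)\ge 3(s+1)$, and $N(T')$ misses $C_q$. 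This is essentially the classical Berge--Tutte-style argument behind Anderson's theorem, so while the paper offers no proof, your route is the standard one and is sound.
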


\begin{theorem}[(Woodall \cite{W73,W78})]\label{thm:322}\mbox{}\\*
  If $\bind(G)\ge3/2$, then~$G$ is hamiltonian.
\end{theorem}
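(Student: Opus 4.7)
The plan is to proceed by contradiction: assume $\bind(G)\ge 3/2$ but $G$ is not hamiltonian, and exhibit an $S \in \mathcal{S}$ violating $|N(S)|/|S| \ge 3/2$. Before attacking hamiltonicity directly, I would wring preliminary structural bounds out of the binding hypothesis. For a vertex $v$ of minimum degree the set $S_0 := V(G)\setminus N[v]$ lies in $\mathcal{S}$, since $v$ itself has no neighbour in $S_0$, and the resulting chain
\[
  n-1 \;\ge\; |N(S_0)| \;\ge\; \tfrac32\bigl(n-1-\delta(G)\bigr)
\]
forces $\delta(G) \ge (n-1)/3$; analogous choices of $S$ built from components of $G-X$ for a vertex cut $X$ give a comparably strong lower bound on $\kappa(G)$. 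These bounds handle all small $n$ directly, so I may assume $n$ is large.

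Next, let $C = v_1v_2\cdots v_c v_1$ be a longest cycle of $G$; since $G$ is not hamiltonian, some component $H$ of $G - V(C)$ is nonempty. Set $A := N(V(H))\cap V(C)$, $k := |A|$, $h := |V(H)|$. The core structural ingredient is the standard longest-cycle lemma of Bondy/P\'{o}sa: the successor set $A^+ := \{v_i^+ : v_i \in A\}$ on $C$ is independent in $G$, and no vertex of $A^+$ has a neighbour in $V(H)$; the symmetric statement holds for the predecessor set $A^-$. In particular, for any $x \in V(H)$ the set $\{x\}\cup A^+$ is independent.

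With these tools in hand, the endgame is to choose $S \in \mathcal{S}$ whose binding ratio beats $3/2$. The first natural candidate is $S := V(H)\cup A^+$: some vertex of $A^+$ has no neighbour in $S$ (by Bondy independence together with the fact that $A^+$ avoids $V(H)$), so $N(S)\neq V(G)$, and $N(S) \subseteq V(G)\setminus A^+$. Combining the resulting inequality $\tfrac32(h+k) \le |N(S)| \le n - k$ with the companion bounds $k \ge h/2$ (from $S := V(H)$), $d(x) \le (h-1)+k$, and $\delta(G) \ge (n-1)/3$, one begins to pinch $h$, $k$ and $c = n - h$ against each other.

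The main obstacle is that the ratio $3/2$ is tight: the basic independent set $V(H)\cup A^+$ alone is not large enough, or $|N(S)|$ not small enough, to generate a contradiction for every $n$. To close the gap I expect to have to enrich $S$ --- for instance by adjoining part of $A^-$, or non-neighbours of a chosen $x$ inside $H$ --- or to expose further vertices of $V(C)$ that are forced to lie outside $N(S)$ via a careful analysis of the arcs of $C$ between consecutive elements of $A$ (arguments in the spirit of the Bondy/Woodall longest-cycle proofs). Managing this bookkeeping so that the upper bound on $|N(S)|$ strictly beats $\tfrac32|S|$ is where the real technical weight of the proof resides.
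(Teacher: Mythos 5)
The paper itself gives no proof of this theorem; it is a survey and simply cites Woodall's original papers. Judged on its own terms, your proposal is an outline of the right \emph{kind} of argument --- Woodall's proof does work by taking a longest cycle $C$, a component $H$ of $G-V(C)$ with attachment set $A$, exploiting the independence of the successor set $A^+$ and its non-adjacency to $H$, and exhibiting sets $S$ with $|N(S)|<\tfrac32|S|$ --- and your preliminary deductions ($\delta(G)\ge\tfrac13(n-1)$ from $S_0=V(G)\setminus N[v]$, $k\ge h/2$ from $S=V(H)$, and $\tfrac32(h+k)\le n-k$ from $S=V(H)\cup A^+$) are all correct as stated.

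But there is a genuine gap, and you have located it yourself: those inequalities do not close. From $k\ge h/2$ and $\tfrac32(h+k)\le n-k$ one gets only $n\ge\tfrac{11}{4}h$, which is consistent with $\bind(G)\ge 3/2$ and a nonhamiltonian $G$ as far as this bookkeeping goes; no contradiction follows for large $n$, and the minimum-degree bound $\delta\ge(n-1)/3$ does not rescue it. The entire content of Woodall's theorem lives in the step you defer --- enlarging $S$ (using $A^-$, the non-neighbours of a vertex of $H$, and a careful analysis of the arcs of $C$ between consecutive vertices of $A$, together with insertion/rotation arguments) until the ratio genuinely drops below $3/2$. That analysis is long and delicate (it occupies most of Woodall's 1973/1978 papers), and a proof that stops at ``this is where the real technical weight resides'' has not proved the theorem. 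As submitted, this is a plausible plan of attack, not a proof.
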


\begin{theorem}[(Shi \cite{S87})]\label{thm:323}\mbox{}\\*
  If $\bind(G)\ge3/2$, then~$G$ contains a cycle of length~$l$, for
  $3\le l\le |V(G)|$.
\end{theorem}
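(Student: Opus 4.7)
My plan is to build on Theorem~\ref{thm:322} and reduce the problem to showing pancyclicity of a hamiltonian graph with $\bind(G)\ge 3/2$. First I would invoke Theorem~\ref{thm:322} to fix a Hamilton cycle $C=v_1v_2\cdots v_nv_1$, which takes care of the length $l=n$ case, and then aim to establish the Bondy-style dichotomy: either $G$ is pancyclic, or $G$ is bipartite. Since any bipartite graph with parts $A,B$ (say $|A|\le|B|$) satisfies $\bind(G)\le|A|/|B|\le 1<3/2$ (take $S=B$, so $N(S)\subseteq A$), the bipartite alternative is incompatible with the hypothesis, and pancyclicity follows.

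Before attacking the dichotomy, I would extract the structural leverage provided by $\bind(G)\ge3/2$: for every nonempty $S\subseteq V(G)$ with $N(S)\ne V(G)$, one has $|N(S)|\ge\lceil\tfrac{3}{2}|S|\rceil$. Specializing: taking $S=V(G)\setminus N[v]$ for a minimum-degree vertex gives $\delta(G)\ge(n-1)/3$; applying the condition to any independent set $I$ yields $n-|I|\ge|N(I)|\ge\tfrac32|I|$ and hence $\alpha(G)\le 2n/5$; and for any two non-adjacent vertices $u,v$, one has $|N(\{u,v\})|\ge 3$, forcing a rich common-neighbor structure on $C$. These consequences, rather than pure edge-density, are what drive the chord analysis on~$C$.

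The core of the proof is downward induction on $l$: starting from the Hamilton cycle and descending to $l=3$. Given a cycle $C_{l+1}$, I would produce a cycle $C_l$ by exhibiting a chord of $C_{l+1}$ that cuts off a $2$-path. The pair-neighborhood lower bound, $|N(\{v_i,v_{i+2}\})|\ge 3$ when $v_iv_{i+2}\notin E(G)$, forces the existence of short chords, and I expect to deploy an Ore/Bondy style crossing-chord argument along $C$: if some length $l\in\{3,\dots,n-1\}$ is missing, then configurations of chords at certain distances on $C$ are forbidden, and iterating these prohibitions forces $G$ into a bipartite-like skeleton on $V(C)$. The main obstacle I anticipate is organizing this chord bookkeeping carefully enough that the binding-number bounds, which constrain sets rather than single vertices, can be applied to contradict the missing-length assumption.

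Finally, once the dichotomy ``pancyclic or bipartite'' is established, the binding computation above closes out the argument. If the combinatorial analysis yields the sharper structural exception $K_{n/2,n/2}$ instead of a general bipartite graph, the same estimate $\bind(K_{n/2,n/2})=n/(2(n/2-1))\cdot\ldots\le 1$ still rules it out. Thus the whole proof reduces to (i) a clean chord-based pancyclic-or-bipartite theorem for graphs with suitably many chords on a Hamilton cycle, and (ii) the binding inequality for bipartite graphs; I expect step (i) to be where essentially all of Shi's technical work lies.
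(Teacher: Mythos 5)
The paper does not prove this theorem; it is quoted from Shi \cite{S87} without proof, so there is no in-paper argument to compare against. The closest analogue in the survey is the proof of Theorem~\ref{thm:355}, which follows exactly your outer template: invoke a ``pancyclic or bipartite'' dichotomy (there taken from \cite{BS90} under Chv\'atal's degree condition) and then kill the bipartite alternative. Your frame is likewise sound as far as it goes: Theorem~\ref{thm:322} supplies the Hamilton cycle, and your computation that a bipartite graph with parts $A,B$, $|A|\le|B|$, has $\bind(G)\le|A|/|B|\le1<3/2$ (via $S=B$) correctly eliminates the bipartite case. Your auxiliary consequences of $\bind(G)\ge3/2$ are also correct: $\delta(G)\ge(n-1)/3$ from $S=V(G)\setminus N[v]$, and $\alpha(G)\le 2n/5$ from $|N(I)|\le n-|I|$.

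The gap is that the entire content of the theorem --- producing a cycle of every length $3\le l\le n-1$, or equivalently the pancyclic-or-bipartite dichotomy under the binding hypothesis --- is deferred rather than proved, and the specific mechanism you propose for it does not work. The bound $|N(\{v_i,v_{i+2}\})|\ge3$ for a nonadjacent pair is vacuous here, since $\delta(G)\ge(n-1)/3$ already gives each vertex far more than three neighbours; more importantly, $|N(u)|+|N(v)|\ge 2(n-1)/3<n$ does not force $u$ and $v$ to share a neighbour, so no short chord or $2$-path shortcut on $C_{l+1}$ is forced, and the proposed downward induction stalls at its first step. Nor can you fall back on Bondy's edge-count dichotomy ($|E(G)|\ge n^2/4$ implies pancyclic or $K_{n/2,n/2}$), since $\delta(G)\ge(n-1)/3$ only yields $|E(G)|\ge n(n-1)/6$. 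The binding condition must be applied to large, carefully chosen sets built from the hypothesized missing cycle length --- this is where Shi's work actually lies --- and your proposal identifies that this is the hard part but does not supply it. As written, the argument establishes only the cases $l=n$ (Theorem~\ref{thm:322}) and the exclusion of an exceptional class that has not been shown to be the only obstruction.
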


\noindent
Other graph properties which are guaranteed by lower bounds on binding
number include the existence of an $f$-factor \cite{EE89,KW89,W90}, the
existence of a $k$-clique \cite{KM81,LP09}, and $k$-extendability
\cite{C95,RW00}.

In~\cite{BKSY11}, a best monotone condition was given for a graph to be
$b$-binding, first for $0<b\le1$ and then for $b\ge1$.

\begin{theorem}[(Bauer et al.\ \cite{BKSY11})]\label{thm:324}\mbox{}\\*
  Let $0<b\le1$ and let $\pi=(d_1\le\cdots\le d_n)$ be a graphical
  sequence, with $n\ge\lceil b+1\rceil=2$. If
  \begin{subequations}
    \begin{gather}
      d_i\le\lceil bi\rceil-1\; \Rightarrow\;
      d_{n-\lceil bi\rceil+1}\ge n-i,\quad
      \text{for $1\le i\le\Bigl\lfloor\frac{n}{b+1}\Bigr\rfloor$};
      \label{eq:321}\\
      d_{\lfloor n/(b+1)\rfloor+1}\ge
      n-\Bigl\lfloor\frac{n}{b+1}\Bigr\rfloor,\label{eq:322}
    \end{gather}
  \end{subequations}
  then~$\pi$ is forcibly $b$-binding.
\end{theorem}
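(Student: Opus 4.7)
My plan is to prove the contrapositive: assume some realization $G$ of $\pi$ is not $b$-binding and show that $\pi$ must violate~\eqref{eq:321} or~\eqref{eq:322}. So I fix a binding witness $S'\subseteq V(G)$ with $S'\ne\varnothing$, $N(S')\ne V(G)$, and $|N(S')|<b|S'|$.

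The first step, which I expect to be the crux of the argument, is to replace $S'$ by an independent binding set. I would set $S=S'\setminus N(S')$, i.e., the vertices of $S'$ with no neighbor inside $S'$, so $S$ is independent by construction. Writing $a=|S'\cap N(S')|$, we have $|S|=|S'|-a$ and $|N(S)|\le|N(S')|-a$, since each $v\in S$ has $N(v)\subseteq N(S')\setminus S'$. The hypothesis $b\le 1$ gives $a\le|N(S')|<b|S'|\le|S'|$, ensuring $|S|>0$; and it also forces the new ratio $(|N(S')|-a)/(|S'|-a)$ to stay below $b$, since subtracting the same quantity from numerator and denominator cannot increase a ratio that is already strictly below~$1$. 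Thus $S$ is a nonempty independent binding set.

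With $S$ independent we get the bonus inequality $s+t\le n$, where $s=|S|$ and $t=|N(S)|\le\lceil bs\rceil-1$. Setting $m=\lfloor n/(b+1)\rfloor$ I would split into two cases. If $1\le s\le m$, then $s(b+1)\le n$ gives $\lceil bs\rceil\le n-s$, so every vertex of $S$ (degree $\le t$) and every vertex of $V\setminus(S\cup N(S))$ (degree $\le n-s-1$, since it has no neighbor in $S$) has degree at most $n-s-1$; this produces $s+(n-s-t)=n-t\ge n-\lceil bs\rceil+1$ vertices of degree $\le n-s-1$, and combined with $d_s\le t\le\lceil bs\rceil-1$ this makes~\eqref{eq:321} fail at $i=s$. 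If instead $s\ge m+1$, then the independence of $S$ gives $t\le n-s\le n-m-1$, and the $\ge m+1$ vertices of $S$ all have degree $\le n-m-1$, forcing $d_{m+1}\le n-m-1$ and violating~\eqref{eq:322}.

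The main obstacle is the independence reduction. It is more than a convenience: without it the bound $t\le n-s$ used in Case~2 is unavailable, since $t$ could in principle be close to $n-1$ when $S$ heavily overlaps its own neighborhood, and the argument for~\eqref{eq:322} would collapse. The reduction also relies essentially on $b\le 1$, both in guaranteeing $|S|>0$ and in the ratio-preservation step, which presumably explains why the range $b\ge 1$ is handled by a separate theorem in~\cite{BKSY11}.
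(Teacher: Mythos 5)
The paper itself does not prove Theorem~3.2.1: it is a survey, the result is cited from \cite{BKSY11}, and the only argument supplied in the text is the weak-optimality verification via the extremal graphs $K_{\lceil bi\rceil-1}+\bigl(K_{n-i-\lceil bi\rceil+1}\cup\overline{K_i}\bigr)$ and $K_{n-\lfloor n/(b+1)\rfloor-1}+\overline{K_{\lfloor n/(b+1)\rfloor+1}}$. So there is no in-paper proof to compare against; judged on its own, your argument is correct and complete. The independence reduction checks out: for $v\in S=S'\setminus N(S')$ one indeed has $N(v)\subseteq N(S')\setminus S'$ (a neighbour $u\in S'$ of $v$ would force $v\in N(S')$), so $|N(S)|\le|N(S')|-a$ while $|S|=|S'|-a$, and both the nonemptiness of $S$ and the preservation of the ratio below $b$ use exactly $b\le1$ as you say. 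In Case~1 the chain $s(b+1)\le n\Rightarrow\lceil bs\rceil\le n-s$ is valid because $n-s$ is an integer, the $n-t\ge n-\lceil bs\rceil+1$ vertices of $S\cup\bigl(V\setminus(S\cup N(S))\bigr)$ all have degree at most $n-s-1$, and together with $d_s\le t\le\lceil bs\rceil-1$ this falsifies~\eqref{eq:321} at $i=s$; in Case~2 the disjointness $S\cap N(S)=\varnothing$ gives $t\le n-s\le n-m-1$ and hence $d_{m+1}\le n-m-1$, falsifying~\eqref{eq:322}. Your closing remark is also apt: the reduction to an independent witness is where $b\le1$ enters essentially, which is consistent with the paper treating $b\ge1$ by a separate statement (Theorem~3.2.2) with a different family of extremal graphs.
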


\noindent
If~$\pi$ fails to satisfy~\eqref{eq:321} for some~$i$, then~$\pi$ is
majorized by the degrees of
$K_{\lceil bi\rceil-1}+
\bigl(K_{n-i-\lceil bi\rceil+1}\cup\overline{K_i}\bigr)$. If~$\pi$ fails to
satisfy~\eqref{eq:322}, then~$\pi$ is majorized by the degrees of
$K_{n-\lfloor n/(b+1)\rfloor-1}+\overline{K_{\lfloor n/(b+1)\rfloor+1}}$.
Since neither graph is $b$-binding, Theorem~\ref{thm:324} is weakly
optimal.

\begin{theorem}[(Bauer et al.\ \cite{BKSY11})]\label{thm:325}\mbox{}\\*
  Let $ b\ge1$, and let $\pi=(d_1\le\cdots\le d_n)$ be a graphical
  sequence, with $n\ge\lceil b+1\rceil$. If
  \begin{subequations}
    \begin{gather}
      d_i\le n-\Bigl\lfloor\frac{n-i}{b}\Big\rfloor-1\; \Rightarrow\;
      d_{\lfloor(n-i)/b\rfloor+1}\ge n-i,\quad
      \text{for $1\le i\le\Bigl\lfloor\frac{n}{b+1}\Bigr\rfloor$};
      \label{eq:323}\\
      d_{\lfloor n/(b+1)\rfloor+1}\ge
      n-\Bigl\lfloor\frac{n}{b+1}\Bigr\rfloor,\label{eq:324}
    \end{gather}
  \end{subequations}
  then~$\pi$ is forcibly $b$-binding.
\end{theorem}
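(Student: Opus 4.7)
My plan is to argue by contraposition, in the style of Theorem 3.2.4: assume that some realization $G$ of $\pi$ is not $b$-binding, and exhibit an index $i$ at which (3.2.3) fails, or else show that (3.2.4) fails. Pick $S \subseteq V(G)$ with $|N(S)| < b|S|$, chosen to maximize $|S|$ and, subject to that, to minimize $|N(S)|$. Write $T = N(S)$, $s = |S|$, $t = |T|$, $U = V(G) \setminus T$, and split $S = S_1 \sqcup S_2$ with $S_1 = S \cap T$, $S_2 = S \setminus T$. Because every vertex of $S$ sends all its edges into $T$, and no edge joins $S_1$ to $S_2$ (such an edge would place its $S_2$-endpoint in $T$), one obtains the bounds $d(v) \le t-1$ for $v \in S_1$, $d(v) \le t - s_1$ for $v \in S_2$, and $d(v) \le n-s-1$ for $v \in U \setminus S_2$. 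The assumption $t < bs$ yields $\lfloor t/b \rfloor \le s-1$.

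The target is to show that $\pi$ is majorized by the degree sequence of one of the extremal non-$b$-binding graphs: $K_{n-i-r}+(K_r \cup \overline{K_i})$ with $r = \lfloor (n-i)/b \rfloor$ and $i \in [1, \lfloor n/(b+1) \rfloor]$ for (3.2.3), or $K_{n-\lfloor n/(b+1) \rfloor - 1} + \overline{K_{\lfloor n/(b+1) \rfloor + 1}}$ for (3.2.4). The split is on the size of $U$ relative to $\lfloor n/(b+1) \rfloor$. If $|U| = n - t \le \lfloor n/(b+1) \rfloor$, I would set the index in (3.2.3) to (roughly) $n-t$ or $s_2$ and match the blocks of $S_2$, $S_1$, and $T \setminus S_1$ against the blocks $\overline{K_i}$, $K_r$, $K_{n-i-r}$ of the extremal graph, using the identity $s_1 \ge s + t - n$ (from $s_2 \le n - t$) together with $s \ge \lfloor t/b \rfloor + 1$ to check the required majorization position by position. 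If instead $n - t > \lfloor n/(b+1) \rfloor$, then $U$ supplies at least $\lfloor n/(b+1) \rfloor + 1$ vertices of degree at most $\max(t - s_1, n - s - 1)$, which together with $t < bs$ and the maximality of $|S|$ give $d_{\lfloor n/(b+1) \rfloor + 1} \le n - \lfloor n/(b+1) \rfloor - 1$, violating (3.2.4).

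The main obstacle is the degenerate case $s_1 = 0$, in which $S$ is independent and disjoint from $T$, and in which the weak bound $d(v) \le t$ on $S$-vertices is not by itself enough to match the extremal's $t - 1$. Here one invokes the optimal choice of $S$: if every $v \in S$ satisfied $N(v) = T$, then any $w \in V(G) \setminus S$ with $N(w) \subseteq T$ could be added to $S$ without changing $N(S)$, preserving the ratio $t/(s+1) < b$ and contradicting maximality of $|S|$. Ruling out such $w$ leaves enough structural information to upgrade the degree bounds. This closure/swap step is the main technical content of the proof and is what distinguishes the $b \ge 1$ argument from the $b \le 1$ argument of Theorem 3.2.4; for $b \ge 1$ the binding set can have size comparable to $n/(b+1)$, so the partition $s = s_1 + s_2$ plays a larger role and the case analysis becomes more intricate.
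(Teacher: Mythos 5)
First, a caveat: the survey itself does not prove Theorem~\ref{thm:325}; it only states the result, cites \cite{BKSY11}, and verifies weak optimality by exhibiting the extremal graphs. So your proposal can only be judged on its own merits. Your skeleton is sound: the contrapositive setup, the degree bounds $d(v)\le t-1$ on $S_1$, $d(v)\le t-s_1$ on $S_2$, $d(v)\le n-s-1$ on $U\setminus S_2$, the case split on $|U|=n-t$ versus $q:=\lfloor n/(b+1)\rfloor$, the choice $i=n-t$ in~\eqref{eq:323} for the first case, and the inequality $s_1\ge s+t-n$ are all exactly what a correct proof uses, and the first case does close up from these ingredients once $s_1\ge1$ is known.

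There are, however, two concrete problems. (1) You leave the case $s_1=0$ open, describing a closure/swap argument as ``the main technical content'' without executing it; as written the proof is incomplete there. In fact that case is vacuous where it would hurt you: if $s_1=0$ then $S\subseteq U$, so $s\le n-t\le q$ in your first case, whence $bs\le bq\le n-q\le t$ (using $(b+1)q\le n$), contradicting $t<bs$. So no swap argument is needed — but you have neither this observation nor a completed version of your own. (2) The genuine crux, which your sketch glosses over, is in the second case: for $v\in U\setminus S_2$ you only have $d(v)\le n-s-1$, and this is $\le n-q-1$ only if $s\ge q$, which does \emph{not} follow from $t<bs$ and $t\le n-q-1$ alone ($t$, hence $\lfloor t/b\rfloor+1$, can be tiny). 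You need to exploit maximality of $S$ through the dual set $U$: every $u\in U$ has no neighbour in $S$, and maximality forces $N(U)=V\setminus S$ (any $w\notin S$ with $N(w)\subseteq N(S)$ could be adjoined to $S$). Then either $U$ is itself a set with $|N(U)|<b|U|$, in which case maximality gives $s\ge|U|=n-t\ge q+1$; or $n-s\ge b(n-t)$, which combined with $t<bs$ forces $s>n/(b+1)\ge q$ when $b>1$ and is impossible when $b=1$. Your appeal to ``$t<bs$ and the maximality of $|S|$'' names the right hypotheses but omits this dichotomy, which is the step that actually makes Case~2 work; without it the claimed violation of~\eqref{eq:324} does not follow.
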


\noindent
If~$\pi$ fails to satisfy~\eqref{eq:323} for some~$i$, then~$\pi$ is
majorized by the degrees of
$K_{n-\lfloor(n-i)/b\rfloor-1}+\linebreak
\bigl(K_{\lfloor(n-i)/b\rfloor-i+1}\cup\overline{K_i}\bigr)$. If~$\pi$
fails to satisfy~\eqref{eq:324} (which is the same as~\eqref{eq:322}),
then~$\pi$ is majorized by the degrees of
$K_{n-\lfloor n/(b+1)\rfloor-1}+\overline{K_{\lfloor n/(b+1)\rfloor+1}}$.
Since neither graph is $b$-binding, Theorem~\ref{thm:325} is weakly
optimal.

\setcounter{section}{3}
\setcounter{subsection}{2}
\setcounter{equation}{0}
\setcounter{theorem}{0}
\subsection{Toughness}\label{sec3.3}

The concept of toughness in graphs was introduced by Chv\'{a}tal
in~\cite{C73}. Let~$\omega(G)$ denote the number of components in a
graph~$G$. For $t\ge0$, we call~$G$ \emph{$t$-tough} if
$t\cdot\omega(G-X)\le|X|$, for every $X\subseteq V(G)$ with
$\omega(G-X)\ge2$. The \emph{toughness of~$G$}, denoted~$\tau(G)$, is the
maximum $t\ge0$ such that~$G$ is $t$-tough (taking $\tau(K_n)=n-1$, for
$n\ge1$). Thus if~$G$ is not complete, then
\[\tau(G)=\min\Bigl\{\,\frac{|X|}{\omega(G-X)}\Bigm|
\text{$X\subseteq V(G)$ with $\omega(G-X)\ge2$}\,\Bigr\}.\]
In~\cite{BHS90}, it was shown that computing~$\tau(G)$ is NP-hard.

Toughness has been especially prominent in connection with the existence of
long cycles in graphs. Indeed, it was a longstanding conjecture that every
$2$-tough graph is hamiltonian. But Bauer, Broersma, and
Veldman~\cite{BBV00} disproved this conjecture by constructing
$\bigl(\frac{9}{4}-\epsilon\bigr)$-tough nonhamiltonian graphs.
Unfortunately, the methods in~\cite{BBV00} do not extend to higher levels
of toughness, and it remains an open question whether there exists a
constant $t_0\ge9/4$ such that every $t_0$-tough graph is hamiltonian.

In~\cite{BBHKS13}, a best monotone condition for a graph to be $t$-tough
was given for $t \ge
 1$.

\begin{theorem}[(Bauer et al.\ \cite{BBHKS13})]\label{thm:331}\mbox{}\\*
  Let $t\ge1$, $n\ge\lceil t\rceil+2$, and $\pi=(d_1\le\cdots\le d_n)$ be a
  graphical sequence. If
  \begin{equation}\label{eq:331}
    d_{\lfloor i/t\rfloor}\le i\; \Rightarrow\;
    d_{n-i}\ge n-\lfloor i/t\rfloor,\quad
    \text{for $t\le i<\frac{tn}{t+1}$},
  \end{equation}
  then~$\pi$ is forcibly $t$-tough.
\end{theorem}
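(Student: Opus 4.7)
The plan is a proof by contradiction in the style of Chv\'{a}tal's argument for Theorem~\ref{thm:11}: assume $\pi=(d_1\le\cdots\le d_n)$ satisfies~\eqref{eq:331} but that some realization $G$ of $\pi$ is not $t$-tough, and exhibit an index $i$ in the admissible range $t\le i<tn/(t+1)$ at which~\eqref{eq:331} is in fact violated. Non-$t$-toughness of $G$ delivers a set $X\subseteq V(G)$ with $c:=\omega(G-X)\ge 2$ and $tc>s:=|X|$, so that $c\ge\lfloor s/t\rfloor+1$. I would choose such a witness extremally --- with $c$ as large as possible and, subject to that, $s$ as small as possible --- and use standard shifting arguments to conclude that every component of $G-X$ is a clique and that every vertex of $X$ is joined to every component. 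Writing $n_1\le\cdots\le n_c$ for the component sizes, this yields the clean degree bounds $d_G(v)\le s+n_j-1$ for $v\in C_j$ and $d_G(v)=n-1$ for $v\in X$.

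The selection of $i$ is guided by the majorant graph $K_s+(\overline{K_{\lfloor s/t\rfloor}}\cup K_{n-s-\lfloor s/t\rfloor})$, whose degree sequence $s^{\lfloor s/t\rfloor},(n-\lfloor s/t\rfloor-1)^{n-s-\lfloor s/t\rfloor},(n-1)^s$ falsifies~\eqref{eq:331} exactly at $i=s$. My first attempt is therefore $i=s$: when the $\lfloor s/t\rfloor+1$ small components of $G-X$ are singletons, this instantly gives $d_{\lfloor s/t\rfloor}\le s$, while the other half $d_{n-s}\le n-\lfloor s/t\rfloor-1$ is forced by a count showing that only the $s$ vertices of $X$ can carry degree $\ge n-\lfloor s/t\rfloor$. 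For the general case, I would shift to $i=s+n_m-1$, where $m\le c-1$ is the smallest index with $n_1+\cdots+n_m\ge\lfloor(s+n_m-1)/t\rfloor$: the $m$ smallest components of $G-X$ then supply the required $\lfloor i/t\rfloor$ vertices of degree $\le i$ to activate the hypothesis of~\eqref{eq:331}, and the largest component $C_c$ is too small to contribute to the high-degree count, since $n_c\le n-s-\sum_{j\le m}n_j\le n-s-\lfloor i/t\rfloor$ leaves only the $s\le i$ vertices of $X$ to potentially reach degree $\ge n-\lfloor i/t\rfloor$.

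The main obstacle is guaranteeing that the chosen $i$ actually lies in the admissible window $t\le i<tn/(t+1)$ uniformly across all configurations of $s$ and the $n_j$, and disposing of the edge case in which no $m\le c-1$ meets the criterion above. A preliminary lemma extracting $\delta(G)\ge\lceil t\rceil$ from~\eqref{eq:331} --- obtained by specializing the condition near $i=\lceil t\rceil$ and using $n\ge\lceil t\rceil+2$ --- should handle the lower bound $i\ge t$; the upper bound $i<tn/(t+1)$ must be pulled out of the inequality $tc>s$ together with the component-size arithmetic, and is the fiddly step because of the floor function $\lfloor i/t\rfloor$ for non-integer $t$. I would also need to check carefully that the extremal choice of $X$ really produces the clique-plus-join structure underlying the degree bounds (routine but essential), and I would anticipate treating the boundary subcase $c=\lfloor s/t\rfloor+1$ separately to obtain uniform coverage.
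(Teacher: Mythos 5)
This survey does not actually prove Theorem~\ref{thm:331}: the result is quoted from the cited paper of Bauer et al., and the surrounding text only verifies weak optimality via the graphs $K_i+\bigl(\overline{K_{\lfloor i/t\rfloor}}\cup K_{n-i-\lfloor i/t\rfloor}\bigr)$. So your proposal has to be judged on its own terms. Your overall plan --- take a non-$t$-tough realization $G$, extract $X$ with $c=\omega(G-X)\ge2$ and $tc>s=|X|$, prove $\delta(G)\ge\lceil t\rceil$ as a preliminary lemma, and exhibit a violated index $i=s+n_m-1$ with $m$ minimal such that $n_1+\cdots+n_m\ge\lfloor(s+n_m-1)/t\rfloor$ --- is sound and does lead to a complete proof. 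The one step that would fail as described is the structural one: you cannot obtain the clique-plus-join structure by choosing $X$ extremally inside a \emph{fixed} realization of $\pi$; no choice of $X$ turns the components of $G-X$ into cliques or joins $X$ completely to them. The legitimate version of that move is to pass to an edge-maximal non-$t$-tough graph $H$ with $\pi(H)\ge\pi$ (permissible because condition~\eqref{eq:331} is preserved under majorization, so a violation for $\pi(H)$ pulls back to $\pi$). Better still, you do not need the structure at all: both halves of the violation you seek are \emph{upper} bounds on degrees. In any graph, a vertex $v$ in a component $C_j$ of $G-X$ satisfies $d(v)\le s+n_j-1$, which already supplies the $\lfloor i/t\rfloor$ vertices of degree $\le i$; and for the other half you only need that at most $i$ vertices have degree $\ge n-\lfloor i/t\rfloor$, which holds because every vertex outside $X$ has degree at most $s+n_c-1\le n-\lfloor i/t\rfloor-1$ (since $n_c\le n-s-(n_1+\cdots+n_m)\le n-s-\lfloor i/t\rfloor$) and $|X|=s\le i$. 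The exact degrees of the vertices of $X$ are irrelevant, so the equality $d(v)=n-1$ you assert for $v\in X$ is never used.

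The two items you flag but leave unverified do go through and must be written out. First, an index $m\le c-1$ always exists: at $m=c-1$ one has $n_1+\cdots+n_{c-1}\ge(c-2)+n_{c-1}$, while $\lfloor(s+n_{c-1}-1)/t\rfloor\le s/t+n_{c-1}-1<c+n_{c-1}-1$ (using $s<tc$ and $t\ge1$), so the floor is at most $c+n_{c-1}-2$. Second, the window check: the minimum-degree lemma is valid ($i=\lceil t\rceil$ lies in the admissible range because $2t>\lceil t\rceil$ and $n\ge\lceil t\rceil+2$, and $d_1\le\lceil t\rceil$ would force $\lceil t\rceil+1$ universal vertices), and it gives $i=s+n_m-1\ge s+n_1-1\ge\delta(G)\ge t$; for the upper end, $n=s+(n_1+\cdots+n_m)+n_{m+1}+\cdots+n_c\ge s+n_m+(n_1+\cdots+n_m)\ge(i+1)+\lfloor i/t\rfloor$, which is equivalent to $i<tn/(t+1)$. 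Note that this argument also covers the case $s=0$ (a disconnected realization), which your sketch does not mention and which your ``first attempt $i=s$'' cannot handle. With the structural step deleted or repaired and these verifications inserted, your outline becomes a correct proof.
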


\noindent
If~$\pi$ fails to satisfy~\eqref{eq:331} for some~$i$, then~$\pi$ is
majorized by the degrees of
$K_i+\bigl(\overline{K_{\lfloor i/t\rfloor}}\cup
K_{n-i-\lfloor i/t\rfloor}\bigr)$ which is not $t$-tough. Thus
Theorem~\ref{thm:331} is weakly optimal. Note also that
condition~\eqref{eq:331} of Theorem~\ref{thm:331} reduces to
condition~\eqref{eq:11} of Theorem~\ref{thm:11} when $t=1$.

By Theorem~\ref{thm:331}, a best monotone $t$-tough condition on degree
sequences of length~$n$ requires fewer than~$n$ weakly optimal
Chv\'{a}tal-type conditions, for $t\ge1$. But this changes markedly as
$t\rightarrow0$. In particular, for any integer $k\ge2$, a best monotone
$\frac{1}{k}$-tough condition on degree sequences of length~$n$ requires at
least $f(k)n$ weakly optimal, Chv\'{a}tal-type conditions, where~$f(k)$
grows superpolynomially as $k\rightarrow\infty$. This is implied by
Theorem~\ref{thm:22} and the following result \cite[Lemma~4.2]{BBHKS13}.

\begin{theorem}[(Bauer et al.\ \cite{BBHKS13})]\label{thm:332}\mbox{}\\*
  Let $n=m(k+1)$, where $k\ge2$ and $m\ge9$ are integers. Then the number
  of $\frac{1}{k}$-tough sinks in~$G_n$ is at least
  $\dfrac{p(k-1)}{5(k+1)}n$, where~$p$ is the integer partition function.
\end{theorem}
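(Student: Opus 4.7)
The plan is to construct, for each pair $(s,\lambda)$ with $s\in\{1,\ldots,m-2\}$ and $\lambda=(a_1\ge a_2\ge\cdots\ge a_j)$ a partition of $k-1$ into positive parts, an edge-maximal non-$\tfrac{1}{k}$-tough graph $G(s,\lambda)$ on $n$ vertices whose degree sequence is a sink in $G_n$, and to verify that the resulting $(m-2)\,p(k-1)$ sequences are pairwise distinct. Since $m\ge 9$ gives $m-2\ge m/5$, this produces at least $\tfrac{m}{5}\,p(k-1)=\tfrac{p(k-1)}{5(k+1)}\,n$ sinks in $G_n$, as desired.

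Define
\[
G(s,\lambda)\;=\;K_s+\bigl(\,\overline{K_{sk-j}}\cup K_{1+a_1}\cup K_{1+a_2}\cup\cdots\cup K_{1+a_j}\cup K_b\,\bigr),
\]
where $b=(m-s)(k+1)-(k-1)$. For $s\le m-2$ and $k\ge 2$ we have $b\ge k+3$ and $sk-j\ge k-(k-1)=1$, so $G(s,\lambda)$ is well-defined. Taking $X=V(K_s)$, the graph $G(s,\lambda)-X$ has $(sk-j)+j+1=sk+1>k|X|$ components, so $G(s,\lambda)$ is not $\tfrac{1}{k}$-tough. Moreover $G(s,\lambda)$ is edge-maximal non-$\tfrac{1}{k}$-tough: a routine argument shows every edge-maximal non-$\tfrac{1}{k}$-tough graph on $n$ vertices has the form $K_{s^*}+\bigl(\bigcup_{i=1}^{ks^*+1}K_{c_i}\bigr)$ (the witness cut must be a clique completely joined to everything outside, each outside component must itself be a clique, and $\omega(G-X)$ must equal $k|X|+1$ exactly).

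The degree sequence $\pi(G(s,\lambda))$ has $sk-j$ entries equal to $s$, then $1+a_i$ entries equal to $s+a_i$ for each $i$, then $b$ entries equal to $s+b-1$, and finally $s$ entries equal to $n-1$; since $b\ge k+3>k\ge 1+a_i$ these four blocks do not overlap. Distinct pairs $(s,\lambda)$ produce distinct sequences: different $s$ give different numbers of entries equal to $n-1$, and at fixed $s$, different partitions of $k-1$ give different middle multisets.

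The main obstacle is proving each $\pi(G(s,\lambda))$ is a sink. Suppose for contradiction $\pi(G(s,\lambda))<\pi(H)$ for some non-$\tfrac{1}{k}$-tough graph $H$; by passing to an edge-maximal supergraph we may take $H=K_{s^*}+\bigl(\bigcup_{i=1}^{ks^*+1}K_{c_i}\bigr)$. Matching the last $s$ positions of $\pi(H)$ and $\pi(G(s,\lambda))$ forces $s^*\ge s$. If $s^*=s+t$ with $t\ge 1$, then $c_{\max}\le(n-s^*)-ks^*=(m-s-t)(k+1)$, so the $(n-s-t)$-th entry of $\pi(H)$ equals the largest outside degree $s+t+c_{\max}-1$, which a short calculation (equivalent to $k-1<tk$, true for all $t\ge 1$ and $k\ge 2$) bounds strictly below $s+b-1$. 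Since $s^*\le m-1$ forces $t\le m-1-s<b$, we have $\pi(G(s,\lambda))_{n-s-t}=s+b-1$, yielding $\pi(H)_{n-s-t}<\pi(G(s,\lambda))_{n-s-t}$, contradicting $\pi(H)\ge\pi(G(s,\lambda))$. Hence $s^*=s$, and the multisets $\{c_i\}$ and $\{1^{sk-j},1+a_1,\ldots,1+a_j,b\}$ both partition $n-s$ into $ks+1$ parts; a direct case analysis shows any two distinct such partitions induce incomparable ``staircase'' degree sequences, forcing the multisets to coincide and hence $\pi(H)=\pi(G(s,\lambda))$, a contradiction. The count therefore follows from Theorem~\ref{thm:22}.
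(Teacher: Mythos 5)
The paper never proves Theorem 3.3.2 --- it is quoted as Lemma 4.2 of \cite{BBHKS13} --- so your argument has to be judged on its own, with the paper's proof of the analogous Theorem 3.1.3 as the nearest internal comparison. Your architecture is the natural toughness analogue of that proof, with the extra (and necessary) device of varying the cut size $s$ over $\{1,\ldots,m-2\}$ to pick up the factor of $n$. The vertex count, the failure of $\tfrac{1}{k}$-toughness of $G(s,\lambda)$, the pairwise distinctness of the sequences, and the elimination of $s^*>s$ via the $(n-s-t)$-th entry all check out (the key inequality does reduce to $tk>k-1$), and $(m-2)p(k-1)\ge \tfrac{m}{5}p(k-1)$ holds comfortably. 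Two steps, however, need repair or completion.

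First, your structural claim about edge-maximal non-$\tfrac{1}{k}$-tough graphs is false as stated: a disconnected graph has toughness $0$, and $K_a\cup K_b$ is edge-maximal non-$\tfrac{1}{k}$-tough for $k\ge2$, since adding any missing edge produces a connected graph in which $\omega(G-X)\le 2\le k|X|$ for every cut set $X$ with $\omega(G-X)\ge2$. Such graphs are not of the form $K_{s^*}+\bigl(\bigcup_{i=1}^{ks^*+1}K_{c_i}\bigr)$ for any $s^*\ge1$. Your proof survives because these graphs have maximum degree at most $n-2$ while $\pi(G(s,\lambda))$ ends in $s\ge1$ copies of $n-1$, so they cannot majorize it --- but this case must be dispatched explicitly, and the structure lemma restricted to connected $H$.

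Second, the step that actually establishes the sink property in the case $s^*=s$ --- that two \emph{distinct} partitions of $n-s$ into exactly $ks+1$ positive parts yield incomparable degree sequences --- is only asserted; ``a direct case analysis'' is not an argument, and this is the crux of the whole proof. The claim is true, and here is a short proof you should include: list the outside degrees of $K_s+\bigl(\bigcup K_{c_i}\bigr)$ in descending order as $s-1+g(r)$, where $g(r)$ is the size of the clique containing the $r$-th vertex. Entrywise majorization gives $g_H(r)\ge g_{G}(r)$ for all $r$, hence $1/g_H(r)\le 1/g_{G}(r)$ pointwise; but $\sum_r 1/g_H(r)=\sum_r 1/g_{G}(r)=ks+1$ (each clique of size $c$ contributes $c$ terms equal to $1/c$), so equality holds everywhere and the two partitions coincide. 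With these two patches the proposal is a correct, self-contained proof, and in fact needs only $m\ge3$ rather than $m\ge9$.
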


\noindent
The superpolynomial growth in the complexity of a best monotone $t$-tough
theorem as $t\rightarrow0$ suggests the desirability of having a simple
$t$-tough theorem, for $0<t<1$. The following was given in~\cite{BBHKS13}.

\begin{theorem}[(Bauer et al.\ \cite{BBHKS13})]\label{thm:333}\mbox{}\\*
  Let $0<t<1$, $n\ge\lfloor1/t\rfloor+2$, and $\pi=(d_1\le\cdots\le d_n)$
  be a graphical sequence. If
  \begin{subequations}
    \begin{gather}
      d_i\le i-\lfloor 1/t\rfloor+1\; \Rightarrow\;
      d_{n-i+\lfloor 1/t\rfloor-1}\ge n-i,\quad
      \text{for $\lfloor 1/t\rfloor\le i<
        \tfrac12\bigl(n+\lfloor 1/t\rfloor-1\bigr)$};\label{eq:332}\\
      d_i\le i-1\; \Rightarrow\; d_n\ge n-i,\quad
      \text{for $1\le i\le\tfrac12n$},
    \end{gather}
  \end{subequations}
  then~$\pi$ is forcibly $t$-tough.
\end{theorem}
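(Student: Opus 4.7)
The plan is to proceed by contradiction: assume $\pi$ satisfies both conditions (3.3.2a) and (3.3.2b) but that some realization $G$ of $\pi$ fails to be $t$-tough. I will fix a cut set $X\subseteq V(G)$ with $\omega(G-X)\ge 2$ and $s:=|X|<t\cdot\omega(G-X)=:tc$, list the component sizes $a_1\le\cdots\le a_c$ of $G-X$, and write $k:=\lfloor 1/t\rfloor$.

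First I would dispose of the case $s=0$ using (3.3.2b) at $i=a_1$. Since $G$ is disconnected, $a_1\le n/2$ places the index in range. The $a_1$ vertices of the smallest component force $d_{a_1}\le a_1-1$, while every vertex lives in a component of size at most $n-a_1$, so $d_n\le n-a_1-1<n-a_1$, contradicting the conclusion of (3.3.2b).

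The substantive case is $s\ge 1$. Here the inequality $c>s/t\ge sk$ gives $c\ge sk+1$, and $(s-1)(k-1)\ge 0$ upgrades this to $c\ge s+k$. Combined with $a_1,\ldots,a_{c-2}\ge 1$, this produces the key estimate
\[a_{c-1}+a_c\le (n-s)-(c-2)\le n-2s-k+2.\]
My plan is then to falsify (3.3.2a) at the index $i:=s+a_{c-1}+k-2$. The LHS $d_i\le i-k+1$ should hold because the $n-s-a_c$ vertices of $C_1\cup\cdots\cup C_{c-1}$ all have degree at most $s+a_{c-1}-1=i-k+1$, and the key estimate guarantees $n-s-a_c\ge i$. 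The RHS should fail because the same estimate yields $s+a_c-1<n-i$, so only the vertices of $X$ can attain degree $\ge n-i$; since $s<s+a_{c-1}=i-k+2$, we will conclude that $d_{n-i+k-1}<n-i$, contradicting (3.3.2a).

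The delicate step is verifying that $i$ lies in the prescribed range $[k,(n+k-1)/2)$. The lower bound $i\ge k$ follows from $s\ge 1$ and $a_{c-1}\ge 1$, while the upper bound reduces via the key estimate to $a_{c-1}\le (a_{c-1}+a_c)/2\le (n-2s-k+2)/2$. I expect the main obstacle to be precisely the small algebraic upgrade from $c\ge sk+1$ to $c\ge s+k$: this is the strengthening that tightens the bound on $a_{c-1}+a_c$ just enough for the LHS low-degree count and the RHS high-degree count to close at the same index $i$.
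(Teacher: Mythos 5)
Your proof is correct: all the estimates check out, including the delicate range verification $i<\tfrac12(n+k-1)$, which indeed reduces via $2a_{c-1}\le a_{c-1}+a_c\le n-2s-k+2$ to the key estimate, and the upgrade $c\ge sk+1\ge s+k$ from $(s-1)(k-1)\ge0$ is exactly the step needed. The survey itself gives no proof of this theorem (deferring to the cited reference), but your argument -- choosing a toughness-violating cut $X$, counting low-degree vertices in the components other than the largest to violate the hypothesis of the first condition at $i=s+a_{c-1}+k-2$, and counting high-degree vertices in $X$ to violate its conclusion, with the second condition handling the disconnected case -- is the standard approach for results of this type and is essentially the argument in the cited source.
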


\setcounter{section}{3}
\setcounter{subsection}{3}
\setcounter{equation}{0}
\setcounter{theorem}{0}
\subsection{Factors}\label{sec3.4}

The \emph{deficiency} of a graph~$G$, denoted $\defi(G)$, is the number of
vertices unmatched under a maximum matching in~$G$. In particular, $G$ has
a $1$-factor if and only if $\defi(G)=0$. We call~$G$
\emph{$\beta$-deficient} if $\defi(G)\le\beta$.

In~\cite{V72} (see also~\cite{BC76}), a best monotone condition was given
for a graph to be $\beta$-deficient.

\begin{theorem}[(Las Vergnas \cite{V72})]\label{thm:341}\mbox{}\\*
  Let $\pi=(d_1\le\cdots\le d_n)$ be a graphical sequence, and let
  $0\le\beta\le n$ with $n\equiv\beta\pmod2$. If
  \begin{equation}\label{eq:341}
    d_{i+1}\le i-\beta\; \Rightarrow\; d_{n+\beta-i}\ge n-i-1,\quad
    \text{for $1\le i\le\tfrac12(n+\beta-2)$},
  \end{equation}
  then~$\pi$ is forcibly $\beta$-deficient.
\end{theorem}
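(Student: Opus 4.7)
The plan is to prove the contrapositive: I assume $\pi$ has a realization $G$ that is not $\beta$-deficient and produce an index $i$ in the allowed range at which (3.4.1) fails. Since deficiency has the same parity as $n$, and $n\equiv\beta\pmod2$, the assumption $\defi(G)>\beta$ forces $\defi(G)\ge\beta+2$. Applying the Tutte--Berge formula gives a set $S\subseteq V(G)$ with $o(G-S)\ge|S|+\beta+2$; write $s=|S|$ and $q=o(G-S)$, let the odd components of $G-S$ be $C_1,\dots,C_q$ with sizes $c_1\le\cdots\le c_q$, and let the even components have sizes $e_1,\dots,e_r$. Thus $q\ge s+\beta+2$ and $\sum_k c_k+\sum_l e_l=n-s$.

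The first step is to locate many low-degree vertices. Set $j=s+\beta+1$ and $t=c_j$; each vertex of $C_1\cup\cdots\cup C_j$ has degree at most $t-1+s$ in $G$, and this union contains at least $(j-1)+t=s+\beta+t$ vertices, so $d_{s+\beta+t}\le s+t-1$. Writing $i=s+\beta+t-1$, this reads $d_{i+1}\le i-\beta$, so the hypothesis of (3.4.1) is satisfied at $i$. The upper range bound $i\le\tfrac12(n+\beta-2)$ rearranges to $2s+\beta+2t\le n$, which follows from $\sum c_k\ge s+\beta+2t$ (using $c_k\ge1$ for $k<j$, $c_k\ge t$ for $k\ge j$, and $q-j+1\ge2$) together with $\sum c_k\le n-s$.

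The main obstacle is refuting the conclusion of (3.4.1) at this $i$, for which I must show at most $s+t-1$ vertices of $G$ have degree $\ge n-s-t-\beta$. Such a vertex lies either in $S$ or in a component of $G-S$ of size $\ge M:=n-2s-t-\beta+1$, so it suffices to rule out any component of size $\ge M$. If some $C_{k_0}$ with $k_0>j$, or some even $D_{l_0}$, had size $\ge M$, the extra contribution would push $\sum c_k+\sum e_l$ past $(s+\beta)+t+M=n-s+1$ or $(s+\beta)+2t+M\ge n-s+t+1$ respectively, contradicting $\sum c_k+\sum e_l=n-s$. If instead $c_{k_0}\ge M$ for some $k_0\le j$, then $t\ge M$ forces $2t\ge n-2s-\beta+1$, and again $\sum c_k\ge s+\beta+2t\ge n-s+1$ is a contradiction. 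Consequently only vertices of $S$ can reach degree $\ge n-s-t-\beta$, giving at most $s\le s+t-1$ such vertices (using $t\ge1$), so $d_{n+\beta-i}\le n-i-2$ and (3.4.1) fails at $i$.

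The remaining subtlety is the lower bound $i\ge1$: the formula $i=s+\beta+t-1$ produces $i=0$ exactly in the corner case $s=\beta=0$ and $t=1$. Here $G$ has at least two odd components, one of them an isolated vertex, and a direct check at $i=1$ (or, equivalently, repeating the argument above with $j=2$ in place of $j=1$) verifies that (3.4.1) still fails. Apart from this corner case, the parity reduction and the Tutte--Berge step are routine; the algebraic volume count in the third paragraph is the only place requiring careful bookkeeping.
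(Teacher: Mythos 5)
The paper does not actually prove Theorem 3.4.1 -- it is cited to Las Vergnas, and only the weak-optimality discussion is supplied -- so your argument has to stand on its own. Its main line is sound: the parity reduction to $\defi(G)\ge\beta+2$, the Tutte--Berge set $S$, the choices $j=s+\beta+1$, $t=c_j$, $i=s+\beta+t-1$, the range inequality $2s+\beta+2t\le n$, and the three-case volume count showing that only the $s\le s+t-1$ vertices of $S$ can have degree at least $n-s-t-\beta$ all check out.

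The genuine gap is the corner case, and it is not repairable as you claim. Take $\beta=0$, $n\ge4$ even, and $G=K_1\cup K_{n-1}$, so that $\pi=0^1(n-2)^{n-1}$ and $\defi(G)=2$. Here $s=0$, $t=1$, your index is $i=0$, and for every $i$ with $1\le i\le\tfrac12(n-2)$ we have $d_{i+1}=n-2>i$, so the hypothesis of \eqref{eq:341} is vacuous and the condition holds at every index in the stated range. Hence no ``direct check at $i=1$'' can succeed, and your fallback of rerunning the argument with $j=2$ lands at $i=c_2=n-1$, far outside the range. The implication you are trying to prove is simply false for this sequence as the theorem is printed: the range in \eqref{eq:341} must begin at $i=0$, not $i=1$ (the $i=0$ instance reads $d_1\le-\beta\Rightarrow d_{n+\beta}\ge n-1$, which is non-vacuous only for $\beta=0$, where it forbids isolated vertices unless some vertex dominates). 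This is corroborated by the extremal family $K_{i-\beta}+\bigl(\overline{K_{i+1}}\cup K_{n-2i+\beta-1}\bigr)$ listed after the theorem, which at $i=\beta=0$ is exactly $K_1\cup K_{n-1}$. With the range corrected to $0\le i\le\tfrac12(n+\beta-2)$, your argument goes through uniformly with no corner case at all, since your counting step already shows at $i=0$ that no vertex has degree $n-1$. You should state that correction explicitly rather than assert a check that fails.
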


\noindent
If~$\pi$ fails to satisfy~\eqref{eq:341} for some~$i$, then $\pi$ is
majorized by the degrees of
$K_{i-\beta}+\bigl(\overline{K_{i+1}}\cup K_{n-2i+\beta-1}\bigr)$, which is
not $\beta$-deficient. Thus Theorem~\ref{thm:341} is weakly optimal.

Taking $\beta=0$ in Theorem~\ref{thm:341}, we obtain a best monotone
condition for a graph to contain a $1$-factor.

In~\cite{BBHKS12}, a best monotone condition was given for a graph to
contain a $2$-factor.

\begin{theorem}[(Bauer et al.\ \cite{BBHKS12})]\label{thm:342}\mbox{}\\*
  Let $\pi=(d_1\le\cdots\le d_n)$ be a graphical sequence, with $n\ge3$. If
  (setting $d_0=0$)
  \begin{subequations}
    \begin{gather}
      d_{(n+1)/2}\ge\tfrac12(n+1),\quad \text{if $n$ is odd};
      \label{eq:342}\\
      d_{n/2-1}\ge\tfrac12n\; \vee\; d_{n/2+1}\ge\tfrac12n+1,\quad
      \text{if $n$ is even};\label{eq:343}\\
      d_i\le i\;\wedge\;d_{i+1}\le i+1\; \Rightarrow\;
      d_{n-i-1}\ge n-i-1\;\vee\;d_{n-i}\ge n-i,\quad
      \text{for $0\le i\le\tfrac12n-1$};\label{eq:344}\\
      d_{i-1}\le i\;\wedge\;d_{i+2}\le i+1\; \Rightarrow\;
      d_{n-i-3}\ge n-i-2\;\vee\;d_{n-i}\ge n-i-1,\quad
      \text{for $1\le i\le\tfrac12(n-5)$},\label{eq:345}
    \end{gather}
  \end{subequations}
  then~$\pi$ forcibly contains a $2$-factor.
\end{theorem}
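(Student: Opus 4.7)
My plan has two stages: verify weak optimality of (3.4.2)--(3.4.5) by exhibiting, for each failing instance, a non-$2$-factor realization whose degrees majorize $\pi$; and then prove sufficiency via Tutte's $f$-factor theorem applied with $f\equiv 2$.

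For weak optimality, the extremal constructions should parallel those used for Theorems~3.2.4 and~3.4.1. Failure of (3.4.2) for odd $n$, or of (3.4.3) for even $n$, is witnessed by the join $K_{\lceil n/2\rceil-1}+\overline{K_{\lfloor n/2\rfloor+1}}$, whose oversized independent set obstructs any $2$-factor. Failure of (3.4.4) at index $i$ is witnessed by $K_i+(\overline{K_{i+1}}\cup K_{n-2i-1})$, the direct $2$-factor analogue of the extremal graph for Chv\'{a}tal's condition. Failure of (3.4.5), asymmetric in its indices, should correspond to a graph with one extra small odd-parity component, e.g.\ of the form $K_i+(\overline{K_{i-1}}\cup K_3\cup K_{n-2i-2})$ or a variant producing the same degree multiset; the defect here is a parity obstruction rather than a sheer size obstruction, which is exactly why (3.4.5) is a separate clause.

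For sufficiency, suppose $\pi$ satisfies (3.4.2)--(3.4.5) but some realization $G$ has no $2$-factor. Apply Tutte's theorem with $f\equiv 2$ to obtain disjoint $S,T\subseteq V(G)$ with
\[ 2|S|+\sum_{v\in T}\bigl(d_G(v)-2\bigr)-q(S,T)\le -2,\]
where $q(S,T)$ counts those components $C$ of $G-S-T$ for which $e_G(C,T)$ is odd. Choose $(S,T)$ to minimize $|S|+|T|$ (breaking ties to minimize $|T|$); the standard reductions then force each vertex of $T$ to have all but at most one neighbour in $S$ and each component counted by $q(S,T)$ to be a clique with a tightly controlled attachment to $T$. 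Writing $s=|S|$, I would count the singleton components and the larger ones separately, bound their degrees in $\pi$ by $s-1$ and $s+|C|-1$ respectively, and derive an index $i$ at which the antecedent of one of (3.4.4), (3.4.5) is satisfied while both disjuncts of the consequent fail. The extremal parity cases in which the defect is localized near $|S|=\lfloor n/2\rfloor$ yield the contradictions with (3.4.2), (3.4.3).

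The principal obstacle is matching the Tutte witness to the correct degree-sequence condition. Condition (3.4.4) handles the case where the obstruction is an independent-like set of $i+1$ vertices joined through a cut of size~$i$; condition (3.4.5) handles the subtler case where the Tutte defect arises from an odd component of size~$3$ (or a short odd-attachment pattern), shifting the critical indices by one and enlarging the consequent block from two positions to four. Keeping track of this parity-driven shift, together with the bookkeeping of how small odd components simultaneously affect $q(S,T)$ and the degrees at positions $n-i-3, n-i-2, n-i-1, n-i$, is where essentially all of the technical difficulty lies; the cases where $n$ is even or odd with $|T|=0$ require a separate direct argument to produce (3.4.2) and (3.4.3).
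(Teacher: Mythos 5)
The survey does not reprove the sufficiency direction of Theorem~3.4.2 (it is quoted from~\cite{BBHKS12}); the only argument the paper itself supplies is the weak-optimality discussion, and that is exactly where your proposal breaks down. Your witness for~\eqref{eq:342} (odd $n$) agrees with the paper's $K_{(n-1)/2}+\overline{K_{(n+1)/2}}$, but your other three witnesses fail to majorize every sequence violating the corresponding condition. For~\eqref{eq:343}: a violating sequence may have $d_{n/2}=d_{n/2+1}=\tfrac12n$, which exceeds the degree $\tfrac12n-1$ that $K_{n/2-1}+\overline{K_{n/2+1}}$ has in those positions; the correct witness is $K_{n/2-1}+\bigl(\overline{K_{n/2-1}}\cup K_2\bigr)$, whose two $K_2$-vertices supply the needed degree $\tfrac12n$. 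For~\eqref{eq:344}: a violating sequence may have $d_{i+1}=i+1$ and $d_{n-i}=n-i-1$, whereas the pure join $K_i+\bigl(\overline{K_{i+1}}\cup K_{n-2i-1}\bigr)$ has degree only $i$ at position $i+1$ and only $n-i-2$ at position $n-i$; the paper adds one edge between $\overline{K_{i+1}}$ and $K_{n-2i-1}$ to lift exactly those two degrees, and the obstruction survives because the independent set still needs $2(i+1)>2i+1$ edge-ends from $K_i$ plus the single added edge. For~\eqref{eq:345} your proposed $K_i+\bigl(\overline{K_{i-1}}\cup K_3\cup K_{n-2i-2}\bigr)$ is not a witness at all: its degrees at positions $n-i-2,\ldots,n-i$ are $n-i-3$, too small to majorize a violating sequence with $d_{n-i}=n-i-2$, and, worse, the graph contains a $2$-factor (the triangle, a Hamilton cycle of $K_{n-2i-2}$, and a Hamilton cycle of $K_i+\overline{K_{i-1}}$). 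The correct witness is $K_i+\bigl(\overline{K_{i+2}}\cup K_{n-2i-2}\bigr)$ with three independent edges added between $\overline{K_{i+2}}$ and $K_{n-2i-2}$; so~\eqref{eq:345} is not a parity clause but records the deficiency $2(i+2)>2i+3$ of an independent set of size $i+2$ having only three escape edges. The idea you are missing throughout is that extra edges must be grafted onto the extremal joins to push specific degrees up to the exact thresholds appearing in the antecedents and consequents, while keeping the deficiency count strictly positive.

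On sufficiency, your Tutte $f$-factor plan is a reasonable outline and is in the spirit of the original proof, but as written it is only a plan: the passage from a minimal Tutte pair $(S,T)$ to a specific index $i$ at which one of \eqref{eq:344} or \eqref{eq:345} fails is precisely the hard content, and nothing in your sketch pins down why exactly two clauses of those two shapes (with the consequent spread over positions $n-i-3$ through $n-i$ in the second) suffice. Since your model of what \eqref{eq:345} is guarding against is incorrect, the bookkeeping you describe would not land on the right inequality.
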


\noindent
If~$\pi$ fails to satisfy~\eqref{eq:342}, then~$\pi$ is majorized by the
degrees of $K_{(n-1)/2}+\overline{K_{(n+1)/2}}$. If~$\pi$ fails to
satisfy~\eqref{eq:343}, then~$\pi$ is majorized by the degrees of
$K_{(n-2)/2}+\bigl(\overline{K_{(n-2)/2}}\cup K_2\bigr)$. If~$\pi$ fails to
satisfy~\eqref{eq:344} for some~$i$, then~$\pi$ is majorized by the degrees
of $K_i+\bigl(\overline{K_{i+1}}\cup K_{n-2i-1}\bigr)$ with an edge added
joining $\overline{K_{i+1}}$ and $K_{n-2i-1}$. If~$\pi$ fails to
satisfy~\eqref{eq:345} for some~$i$, then~$\pi$ is majorized by the degrees
of $K_i+\bigl(\overline{K_{i+2}}\cup K_{n-2i-2}\bigr)$ with three
independent edges joining $\overline{K_{i+2}}$ and $K_{n-2i-2}$. Since none
of these graphs contains a $2$-factor, Theorem~\ref{thm:342} is weakly
optimal.

We conjecture that the number of weakly optimal Chv\'{a}tal-type conditions
in a best monotone condition for a graph to contain a $k$-factor grows
rapidly with~$k$. More precisely, we put forth the following (cf.\
Theorem~\ref{thm:313} and Theorem~\ref{thm:332}).

\setcounter{conjecturesub}{\value{theorem}}
\begin{conjecturesub}\label{conj:343}\mbox{}\\*
  Let $f(k,n)$ denote the number of $k$-factor sinks in~$G_n$. Then there
  exist $a,b>0$ such that if $n\ge ak+b$, then $f(k,n)$ grows
  superpolynomially in~$k$.
\end{conjecturesub}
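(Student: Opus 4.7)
The plan is to follow the template of the proofs of Theorems~\ref{thm:313} and~\ref{thm:332}: construct, for $n$ linear in $k$, a family of edge-maximal non-$k$-factor graphs on $n$ vertices indexed by the integer partitions $\lambda=(a_1\ge\cdots\ge a_j)$ of $k-1$, and show that each realizes a distinct sink in the degree-majorization poset. By Theorem~\ref{thm:22} and the Hardy--Ramanujan asymptotic, this would produce at least $p(k-1)$ sinks and hence superpolynomial growth of $f(k,n)$.

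A plausible starting point is the graph $G_0=K_{k-1}+(\overline{K_t}\cup K_u)$, with $t,u$ linear in $k$ and $t\ge k$: every vertex in the independent set $\overline{K_t}$ has degree exactly $k-1<k$, so $G_0$ has no $k$-factor by a trivial degree argument. For each partition $\lambda$ of $k-1$, choose $j$ independent vertices $v_1,\ldots,v_j\in\overline{K_t}$ and add $a_i$ edges from $v_i$ to distinct vertices of $K_u$, leaving the remaining $t-j\ge 1$ independent vertices of degree $k-1$ untouched; call the resulting graph $G_\lambda$. Because at least one independent vertex still has degree $k-1$, $G_\lambda$ still has no $k$-factor, and different partitions produce different degree multisets on the $\overline{K_t}$-side, so the $\pi(G_\lambda)$ are pairwise distinct.

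Three items then remain to verify: (i) $G_\lambda$ is edge-maximal with respect to the non-$k$-factor property (every added edge produces a graph with a $k$-factor); (ii) different partitions give genuinely different sinks; and (iii) each $\pi(G_\lambda)$ is a maximal element of $\overline{P_n}$. Items (i) and (ii) should follow from a Tutte $f$-factor analysis of how adding each missing edge repairs the low-degree obstruction, possibly after tuning $t$ and $u$. Item (iii) parallels the minimum-degree argument of Theorem~\ref{thm:313}: any edge-maximal non-$k$-factor graph $H$ with $\pi(H)\ge\pi(G_\lambda)$ must have $\delta(H)\ge\delta(G_\lambda)=k-1$, and a structural analysis of edge-maximal non-$k$-factor graphs with this minimum-degree constraint should force $\pi(H)=\pi(G_\lambda)$.

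The principal obstacle is step (iii). Unlike $k$-edge-connectivity, where every edge-maximal obstruction has the rigid form ``two cliques joined by exactly $k-1$ bridges'' (reducing the sink check to a short degree calculation), edge-maximal non-$k$-factor graphs admit a much richer menagerie of Tutte--Berge obstructions: one must rule out parity deficits spread across many components of $G-S$, as well as $f$-factor cut conditions on induced subgraphs. Ruling out every hypothetical majorizing $H$ will likely require a careful case analysis on its Tutte--Berge witness, and may force a modification of the base graph $G_0$ (perhaps partitioning some other quantity of size $\Theta(k)$ rather than $k-1$); but the partition-indexed degree variation should survive any such modification, yielding $p(\Omega(k))$ sinks and hence the desired superpolynomial growth.
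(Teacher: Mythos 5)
The statement you are attempting to prove is stated in the paper only as Conjecture~\ref{conj:343}; the authors give no proof and explicitly leave it open, so there is no argument in the paper to compare yours against. What you have written is a plan rather than a proof, and you acknowledge as much, but the gap is more serious than the ``principal obstacle'' you flag in step~(iii): the construction as stated cannot yield sinks at all. By design each $G_\lambda$ retains an untouched vertex $w\in\overline{K_t}$ of degree exactly $k-1$. Since $t-j\ge1$ and $j\ge1$ force $t\ge2$, and $a_1\le k-1<u$, the graph $G_\lambda$ always has a non-edge $e$ avoiding $w$ (for instance from some $v_i$ to a vertex of $K_u$ it does not yet meet). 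Then $G_\lambda+e$ still contains $w$ with degree $k-1<k$ and so still has no $k$-factor; because ``contains a $k$-factor'' is an increasing property, $\pi(G_\lambda+e)$ is a graphical sequence strictly majorizing $\pi(G_\lambda)$ that is not forcibly~$P$, so $\pi(G_\lambda)$ is not maximal in $\overline{P_n}$. Thus items~(i) and~(iii) do not merely ``remain to verify''---they fail for every member of your family, and the failure is structural: a single low-degree vertex is too cheap an obstruction to be edge-maximal.

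This is exactly where the analogy with Theorem~\ref{thm:313} breaks down. There, the edge-maximal non-$k$-edge-connected graphs have the rigid form of two cliques joined by $k-1$ edges, so distributing those $k-1$ edge-endpoints according to a partition of $k-1$ automatically lands inside the class of edge-maximal obstructions, and only the short minimum-degree computation is needed to certify sinkhood. For $k$-factors the Tutte/Tutte--Berge obstructions are not of bounded ``codimension'' in this way, and any successful construction must first saturate the obstruction (compare the graphs witnessing weak optimality of Theorem~\ref{thm:342}, where extra independent edges are added precisely to restore edge-maximality) before introducing a partition-indexed perturbation. Your closing remark that ``the partition-indexed degree variation should survive any such modification'' is the entire content of the conjecture and is not justified by anything in the proposal.
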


\setcounter{section}{3}
\setcounter{subsection}{4}
\setcounter{equation}{0}
\setcounter{theorem}{0}
\subsection{Paths and Cycles}\label{sec3.5}

In Section~\ref{sec1}, we noted that Theorem~\ref{thm:11} gives a best
monotone condition for hamiltonicity.

A graph~$G$ is \emph{$k$-hamiltonian} if for all $X\subseteq V(G)$ with
$|X|\le k$, the induced subgraph $\langle V(G)-X\rangle$ is hamiltonian.
Thus `$0$-hamiltonian' is the same as `hamiltonian'. A best monotone
condition for \mbox{$k$-hamiltonicity} was first given in~\cite{C72}
(although the form in which we present it is from~\cite{BC76}
and~\cite{L76}). Of course, Theorem~\ref{thm:11} is the special case $k=0$.

\begin{theorem}[(Chv\'atal \cite{C72})]\label{thm:351}\mbox{}\\*
  Let $\pi=(d_1\le\cdots\le d_n)$ be a graphical sequence with $n\ge3$, and
  let $0\le k\le n-3$. If
  \begin{equation}\label{eq:351}
    d_i\le i+k\; \Rightarrow\;  d_{n-i-k}\ge n-i,\quad
    \text{for $1\le i<\tfrac12(n-k)$},
  \end{equation}
  then~$\pi$ is forcibly $k$-hamiltonian.
\end{theorem}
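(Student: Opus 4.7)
The plan is to prove Theorem~\ref{thm:351} by induction on~$k$. The base case $k=0$ is exactly Chv\'atal's Theorem~\ref{thm:11}, so I assume $k \ge 1$ and that the theorem holds for $k-1$. The reduction driving the induction is the elementary equivalence
\[
\text{$G$ is $k$-hamiltonian} \;\Longleftrightarrow\; \text{$G$ is hamiltonian, and $G-v$ is $(k-1)$-hamiltonian for every $v \in V(G)$,}
\]
which follows from the definition by writing any non-empty $X \subseteq V(G)$ with $|X| \le k$ as $\{v\} \cup (X \setminus \{v\})$. So given a realization~$G$ of a sequence~$\pi$ satisfying~\eqref{eq:351}, I only need to check these two sub-properties.

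For hamiltonicity of~$G$, if $d_i \le i$ then $d_i \le i+k$, so by~\eqref{eq:351} we have $d_{n-i} \ge d_{n-i-k} \ge n-i$. This is precisely Chv\'atal's condition~\eqref{eq:11}, and Theorem~\ref{thm:11} applies.

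For $(k-1)$-hamiltonicity of $G-v$, I would apply the inductive hypothesis to the degree sequence $\pi_v = (d_1' \le \cdots \le d_{n-1}')$ of $G-v$, once I verify that $\pi_v$ satisfies~\eqref{eq:351} with parameters $(n-1, k-1)$. The key ingredient is the interlacing bound $d_j' \ge d_j - 1$, proved by noting that $d_j' \le d_j - 2$ would force at least $j$ vertices in $V(G) - \{v\}$ to have $G$-degree at most $d_j - 1$, contradicting the definition of~$d_j$. Now suppose $d_i' \le i + (k-1)$ with $1 \le i < (n-k)/2$; then $d_i \le d_i' + 1 \le i+k$, so~\eqref{eq:351} supplies $d_{n-i-k} \ge n-i$. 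Hence at least $i+k+1$ vertices of~$G$ have degree $\ge n-i$, and removing~$v$ leaves at least $i+k$ such vertices in $G-v$, each of $(G-v)$-degree at least $n-i-1$. Therefore $d_{n-i-k}' \ge n-i-1 = (n-1)-i$, which is precisely the conclusion of~\eqref{eq:351} at index~$i$ for parameters $(n-1, k-1)$, using the identity $(n-1) - i - (k-1) = n - i - k$.

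The main obstacle is the index bookkeeping: confirming that both the position $n-i-k$ appearing in the conclusion of~\eqref{eq:351} and the admissible range $1 \le i < (n-k)/2$ remain invariant under $(n,k) \mapsto (n-1, k-1)$. This invariance is immediate because each expression depends only on $n-k$ and $i+k$, both of which are fixed by the substitution. Once this has been verified, the induction closes and Theorem~\ref{thm:351} follows.
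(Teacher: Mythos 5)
The paper itself states Theorem~\ref{thm:351} without proof (it is cited from \cite{C72,BC76,L76}; only the weak-optimality construction is given), so your argument can only be judged on its own terms. Your inductive framework is sound: the equivalence between $k$-hamiltonicity of $G$ and ``$G$ is hamiltonian and $G-v$ is $(k-1)$-hamiltonian for every $v$,'' the interlacing bound $d_j'\ge d_j-1$, and the verification that $\pi_v$ satisfies~\eqref{eq:351} with parameters $(n-1,k-1)$ are all correct, including the index bookkeeping for the inductive step. The genuine gap is in the step where you claim that~\eqref{eq:351} ``is precisely Chv\'atal's condition~\eqref{eq:11}.'' It is not: condition~\eqref{eq:11} must be verified for every $i$ with $1\le i\le\tfrac12(n-1)$, whereas~\eqref{eq:351} supplies the implication only for $1\le i<\tfrac12(n-k)$, a strictly smaller range once $k\ge1$ (e.g.\ for $n=7$, $k=1$ the index $i=3$ is not covered). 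For an index $i_0$ with $\tfrac12(n-k)\le i_0\le\tfrac12(n-1)$ and $d_{i_0}\le i_0$, your one-line deduction does not apply, so hamiltonicity of $G$ is not established; this is exactly the kind of range issue you flag as ``the main obstacle'' and then check only for the inductive step.

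The gap is repairable, because~\eqref{eq:351} does imply~\eqref{eq:11}, just not index-by-index. If $d_{i_0}\le i_0$ with $\tfrac12(n-k)\le i_0\le\tfrac12(n-1)$ and $i_0>k$, apply~\eqref{eq:351} at $i_1=i_0-k$: one checks $1\le i_1<\tfrac12(n-k)$ and $d_{i_1}\le d_{i_0}\le i_0=i_1+k$, whence $d_{n-i_0}=d_{n-i_1-k}\ge n-i_1>n-i_0$. If instead $i_0\le k$, apply~\eqref{eq:351} at $i=1$ (legitimate since $k\le n-3$ gives $1<\tfrac12(n-k)$) with $d_1\le d_{i_0}\le i_0\le k+1$ to get $d_{n-k-1}\ge n-1$; this forces $k+2$ universal vertices in any realization, hence $d_1\ge k+2>i_0\ge d_{i_0}\ge d_1$, a contradiction, so this case is vacuous. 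With this lemma inserted, your induction closes and the proof is complete. (The classical route instead works directly with $G-X$ for an arbitrary $|X|\le k$ and verifies~\eqref{eq:11} for $G-X$; it must negotiate the same range discrepancy, so your one-vertex-at-a-time induction is a legitimate alternative once the above is fixed.)
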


\noindent
If~$\pi$ fails to satisfy~\eqref{eq:351} for some~$i$, then~$\pi$ is
majorized by the degrees of
$K_{i+k}+\bigl(\overline{K_i}\cup K_{n-2i-k}\bigr)$, which is not
$k$-hamiltonian. Thus Theorem~\ref{thm:351} is weakly optimal.

A graph is \emph{traceable} if it contains a hamiltonian path. A best
monotone condition for traceability was given in~\cite{C72}. More
generally, $G$ is \emph{$k$-path-coverable} if $V(G)$ can be covered by~$k$
or fewer vertex-disjoint paths. In particular, `$1$-path-coverable' is the
same as `traceable'. A best monotone condition for $k$-path-coverability
was obtained independently in~\cite{BC76} and~\cite{L76}.

\begin{theorem}[(Bondy \& Chv\'atal \cite{BC76}, Lesniak \cite{L76})]
  \label{thm:352}\mbox{}\\*
  Let $\pi=(d_1\le\cdots\le d_n)$ be a graphical sequence and let $k\ge1$.
  If
  \begin{equation}\label{eq:352}
    d_{i+k}\le i\; \Rightarrow\; d_{n-i}\ge n-i-k,\quad
    \text{for $1\le i<\tfrac12(n-k)$},
  \end{equation}
  then~$\pi$ is forcibly $k$-path-coverable.
\end{theorem}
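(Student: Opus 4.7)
The plan is to reduce the claim to Chv\'atal's hamiltonicity theorem (Theorem~\ref{thm:11}, i.e.\ the $k=0$ case of Theorem~\ref{thm:351}) via the classical observation that a graph~$G$ on $n$ vertices is $k$-path-coverable if and only if $G^* := G + \overline{K_k}$, the graph obtained by adjoining $k$ mutually non-adjacent new vertices each joined to every vertex of~$G$, is hamiltonian. The forward direction: a hamiltonian cycle of $G^*$ is cut by the $k$ independent new vertices into exactly $k$ vertex-disjoint subpaths whose union is $V(G)$. The converse: any $k$ vertex-disjoint paths covering $V(G)$ can be stitched together through the $k$ new vertices into a hamiltonian cycle of~$G^*$.

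Given a realization $G$ of $\pi$, the graph $G^*$ has $n+k$ vertices; its $n$ original vertices have degrees $d_1+k,\ldots,d_n+k$, and each of the $k$ new vertices has degree~$n$. I would let $\ell$ be the largest index with $d_\ell \le n-k$ (with $\ell = 0$ if none), and write the sorted degree sequence $\pi^* = (d^*_1 \le \cdots \le d^*_{n+k})$ in closed form:
$d^*_j = d_j + k$ for $1 \le j \le \ell$, $d^*_j = n$ for $\ell+1 \le j \le \ell+k$, and $d^*_j = d_{j-k} + k$ for $\ell+k+1 \le j \le n+k$. Applying Theorem~\ref{thm:11} to $\pi^*$ then reduces the task to verifying Chv\'atal's hamiltonian condition on $\pi^*$, namely that for $1 \le j \le \tfrac12(n+k-1)$,
\[
 d^*_j \le j \ \Rightarrow\ d^*_{n+k-j} \ge n+k-j,
\]
under the standing hypothesis~\eqref{eq:352} on~$\pi$.

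I would carry out the verification via the substitution $j = i + k$. For $j < k$ the hypothesis $d^*_j \le j < k$ fails, since $d^*_j \ge k$, and the implication is vacuous. For $k \le j \le \ell$ (the principal case), $d^*_j = d_j + k$, so $d^*_j \le j$ becomes $d_{i+k} \le i$, which is precisely the hypothesis of~\eqref{eq:352}; and when $n+k-j \le \ell$ the conclusion $d^*_{n+k-j} \ge n+k-j$ similarly unpacks to $d_{n-i} \ge n-i-k$, the conclusion of~\eqref{eq:352}. When $n+k-j$ lies in the new-vertex block $\ell < n+k-j \le \ell + k$, then $d^*_{n+k-j} = n \ge n+k-j$ follows from $j \ge k$; and when $n+k-j > \ell+k$, the definition of $\ell$ yields $d_{n-i-k} \ge n-k+1 \ge n-i-k$, which gives the required bound. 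Finally, the ranges $1 \le j \le \tfrac12(n+k-1)$ and $1 \le i < \tfrac12(n-k)$ match exactly under $i = j - k$.

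The main obstacle will be the careful bookkeeping required when the index $n+k-j$ crosses the boundary between the old-vertex and new-vertex blocks of~$\pi^*$, and when $j$ lies near the edges of its range (in particular ensuring the small-$j$ cases are genuinely vacuous). None of these sub-cases is individually difficult, but the translation between the indexing of~\eqref{eq:352} and Chv\'atal's condition on $\pi^*$ is delicate enough to warrant explicit case analysis; once completed, Theorem~\ref{thm:11} applied to $\pi^*$ yields that $G^*$ is hamiltonian, hence $G$ is $k$-path-coverable, as required.
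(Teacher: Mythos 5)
Your reduction---$G$ is $k$-path-coverable as soon as $G^*=G+\overline{K_k}$ is hamiltonian, followed by an application of Theorem~\ref{thm:11} to the degree sequence of $G^*$---is the classical route, and it is the one implicit in the survey's own remark that adding $k$ complete vertices to a $k$-path-coverable graph yields a hamiltonian graph (the survey gives no proof of the theorem itself, only the weak-optimality discussion). Your closed form for $\pi^*$ and the three sub-cases for the position of $n+k-j$ are correct, and the bookkeeping for $j\ge k+1$ goes through. The gap is at $j=k$, which your substitution $i=j-k$ sends to $i=0$: you fold it into the ``principal case'' and invoke~\eqref{eq:352}, but~\eqref{eq:352} is only hypothesized for $i\ge1$. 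And $j=k$ is not vacuous: $d^*_k=d_k+k\le k$ happens exactly when $d_k=0$, i.e., when $\pi$ has at least $k$ zero entries, and Chv\'atal's condition on $\pi^*$ then demands $d^*_{n}\ge n$, which unwinds to $d_n\ge n-k$---something the stated hypothesis does not supply.

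Moreover, this gap cannot be closed from~\eqref{eq:352} as stated. For $k=1$ the sequence $\pi=0^12^3$ satisfies~\eqref{eq:352} vacuously (its only instance is $i=1$, and $d_2=2>1$), yet its unique realization $K_3\cup K_1$ is not traceable; similarly $0^23^4$ with $k=2$ is realized only by $K_4\cup\overline{K_2}$, which needs three paths. What is missing is precisely the $i=0$ instance $d_k\le 0\Rightarrow d_n\ge n-k$, which for a graphical sequence is equivalent to requiring $d_k\ge1$ (a vertex of degree at least $n-k$ cannot coexist with $k$ isolated vertices). So either the range in~\eqref{eq:352} must be read as beginning at $i=0$---under which reading your argument closes and proves the theorem---or you need to add and use the hypothesis $d_k\ge1$ explicitly. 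As written, your proof asserts that the $j=k$ instance of Chv\'atal's condition follows from~\eqref{eq:352}, and it does not.
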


\noindent
If~$\pi$ fails to satisfy~\eqref{eq:352} for some~$i$, then~$\pi$ is
majorized by the degrees of
$K_i+\bigl(\overline{K_{i+k}}\cup K_{n-2i-k}\bigr)$, which is not
$k$-path-coverable (adding~$k$ complete vertices to a graph which is
$k$-path coverable results in a hamiltonian graph, while adding~$k$
complete vertices to the above graph results in a graph which is not even
$1$-tough). Thus Theorem~\ref{thm:352} is weakly optimal.

A graph is \emph{hamiltonian-connected} if every pair of vertices is joined
by a hamiltonian path. A best monotone condition for
hamiltonian-connectedness was given in \cite[Chapter~10, Theorem~12]{B73}.

\begin{theorem}[(Berge \cite{B73})]\label{thm:353}\mbox{}\\*
  Let $\pi=(d_1\le\cdots\le d_n)$ be a graphical sequence with $n\ge4$. If
  \begin{equation}\label{eq:353}
    d_{i-1}\le i\; \Rightarrow\; d_{n-i}\ge n-i+1,\quad
    \text{for $2\le i<\tfrac12(n+1)$},
  \end{equation}
  then~$\pi$ is forcibly hamiltonian-connected.
\end{theorem}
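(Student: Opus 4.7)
The plan is to adapt Chvátal's closure-based proof of Theorem~\ref{thm:11} to the hamiltonian-connected setting. The key enabling tool is the Bondy--Chvátal closure for hamiltonian-connectedness: for non-adjacent vertices $u,v$ in a graph $G$ on $n$ vertices with $d(u)+d(v)\ge n+1$, the graph $G+uv$ is hamiltonian-connected if and only if $G$ is. Iterating this closure operation, any realization $G$ of $\pi$ can be replaced by its $(n+1)$-closure $G^{*}$, whose degree sequence majorizes $\pi$ and, by monotonicity of condition~\eqref{eq:353}, still satisfies the hypothesis. So without loss of generality I may assume $G$ is already closed, meaning every non-adjacent pair $u,v$ of $G$ satisfies $d(u)+d(v)\le n$.

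Assume for contradiction that this closed $G$ is not hamiltonian-connected. Then $G\neq K_n$, and I choose non-adjacent vertices $u,v$ maximizing $d(u)+d(v)$, with $d(u)\le d(v)$. Set $k:=d(u)$, so that $k\le n/2<(n+1)/2$ and $d(v)\le n-k$. Among the $n-1-d(v)\ge k-1$ non-neighbors of $v$ distinct from $v$ (one of which is $u$), each vertex $w$ satisfies $d(w)+d(v)\le d(u)+d(v)$ by maximality, hence $d(w)\le k$. Assuming $k\ge 2$, this gives $d_{k-1}\le k$, and hypothesis~\eqref{eq:353} applied with $i=k$ forces $d_{n-k}\ge n-k+1$, meaning at least $k+1$ vertices have degree $\ge n-k+1$. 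Symmetrically, the $n-1-k$ non-neighbors of $u$ each have degree $\le d(v)\le n-k$ by maximality, and adjoining $u$ itself (whose degree $k$ is also $\le n-k$) gives $n-k$ vertices of degree $\le n-k$. Hence at most $k$ vertices have degree $\ge n-k+1$, contradicting the lower bound of $k+1$ obtained above. The residual case $k\le 1$ is handled by applying~\eqref{eq:353} at $i=2$ (valid since $n\ge 4$): from $d_1\le 1\le 2$ one obtains three vertices of degree $n-1$, each of which is necessarily adjacent to $u$, contradicting $d(u)\le 1$.

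The main obstacle is verifying the hamiltonian-connected closure lemma, which I would prove by a rotation-of-paths argument: given a hamiltonian $x$-$y$ path in $G+uv$ that uses the edge $uv$, the bound $d(u)+d(v)\ge n+1$ forces a pigeonhole collision between neighbors of $u$ and of $v$ along the path, allowing the $uv$ edge to be rerouted into an $x$-$y$ hamiltonian path in $G$ itself. Once that lemma is in hand, the rest of the proof is essentially Chvátal's bookkeeping with a single arithmetic shift: the closure bound $d(u)+d(v)\le n$ (versus $\le n-1$ for ordinary hamiltonicity) is precisely what promotes Chvátal's implication $d_i\le i\Rightarrow d_{n-i}\ge n-i$ into Berge's stronger $d_{i-1}\le i\Rightarrow d_{n-i}\ge n-i+1$.
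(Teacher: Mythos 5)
Your proof is correct. Note that the survey itself offers no proof of Theorem~\ref{thm:353} --- it cites Berge and only supplies the weak-optimality witness $K_i+\bigl(\overline{K_{i-1}}\cup K_{n-2i+1}\bigr)$ --- so there is no in-paper argument to compare against; but your closure route is the standard one (it is essentially how Bondy and Chv\'atal~\cite{BC76} recover Berge's theorem from their $(n+1)$-stability of hamiltonian-connectedness), and every step checks out. The closure lemma you defer is genuine: with $u=v_j$, $v=v_{j+1}$ consecutive on a hamiltonian $x$--$y$ path of $G+uv$, the indices $i\in\{1,\dots,n-1\}\setminus\{j\}$ admitting a crossover number at least $(d(u)-1)+(d(v)-1)\ge n-1>n-2$, so pigeonhole yields an $x$--$y$ hamiltonian path in $G$ avoiding $uv$. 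Your main count is also right: maximality of $d(u)+d(v)$ over non-adjacent pairs gives at least $k-1$ vertices of degree at most $k$ (forcing, via~\eqref{eq:353} at $i=k$, at least $k+1$ vertices of degree at least $n-k+1$) and simultaneously $n-k$ vertices of degree at most $n-k$, a contradiction; the boundary cases $k\le 1$ (handled at $i=2$, where $n\ge4$ is exactly what makes $i=2<\tfrac12(n+1)$ admissible) and $k=\tfrac12 n$ (where $i=k$ still satisfies $k<\tfrac12(n+1)$) are covered. The only point worth flagging is that you should state explicitly that condition~\eqref{eq:353} is monotone under majorization before transferring it from $\pi$ to the degree sequence of the closure $G^*$, but that verification is immediate since each implication has an upper-bound hypothesis and a lower-bound conclusion.
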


\noindent
If~$\pi$ fails to satisfy~\eqref{eq:353} for some~$i$, then~$\pi$ is
majorized by the degrees of
$K_i+\bigl(\overline{K_{i-1}}\cup K_{n-2i+1}\bigr)$, which is not
hamiltonian-connected (there is no hamiltonian path joining two vertices
in~$K_i$). Thus Theorem~\ref{thm:353} is weakly optimal.

A graph~$G$ is \emph{$k$-edge-hamiltonian} if any collection of
vertex-disjoint paths with at most~$k$ edges altogether belong to a
hamiltonian cycle in~$G$. A best monotone condition for
$k$-edge-hamiltonicity was given in~\cite{K69} (see also \cite[Chapter~10,
Theorem~8]{B73}).

\begin{theorem}[(Kronk \cite{K69})]\label{thm:354}\mbox{}\\*
  Let $\pi=(d_1\le\cdots\le d_n)$ be a graphical sequence with $n\ge3$, and
  let $0\le k\le n-3$. If
  \begin{equation}\label{eq:354}
    d_{i-k}\le i\; \Rightarrow\; d_{n-i}\ge n-i+k,\quad
    \text{for $k+1\le i<\tfrac12(n+k)$},
  \end{equation}
  then~$\pi$ is forcibly $k$-edge-hamiltonian.
\end{theorem}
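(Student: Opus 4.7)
My plan is to follow the classical Chv\'{a}tal/Bondy-Chv\'{a}tal closure strategy that underlies the weakly optimal degree conditions of Theorems~\ref{thm:11}, \ref{thm:351}, \ref{thm:352}, and~\ref{thm:353}. The central tool is a degree-sum closure adapted to $k$-edge-hamiltonicity: for any nonadjacent pair $u,v$ of $G$ with $d(u)+d(v)\ge n+k$, the graph $G$ is $k$-edge-hamiltonian if and only if $G+uv$ is. This is the natural analog of the Bondy-Chv\'{a}tal closure, and can be proved by a rotation-extension argument on a would-be Hamilton cycle through a prescribed linear forest $F$ with $|E(F)|\le k$: any Hamilton cycle of $(G+uv)\cup F$ through $F$ that traverses the edge $uv$ can be rerouted inside $G\cup F$, the extra~$+k$ in the threshold accommodating the edges of~$F$ that must be preserved during the flip. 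Iterating, I would replace $G$ by its $(n+k)$-closure $\hat{G}$, whose degree sequence majorizes $\pi$ (and so still satisfies~\eqref{eq:354}) and which satisfies $d_{\hat{G}}(u)+d_{\hat{G}}(v)\le n+k-1$ for every nonadjacent pair.

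Suppose for contradiction that $\hat{G}$ is not $k$-edge-hamiltonian; in particular $\hat{G}\ne K_n$. Before invoking~\eqref{eq:354} on a maximum nonadjacent pair, I would first establish a minimum degree bound by applying~\eqref{eq:354} at $i=k+1$, which is admissible since $k\le n-3$ gives $k+1<\tfrac12(n+k)$. If the hypothesis $d_1\le k+1$ held, then the conclusion $d_{n-k-1}\ge n-1$ would force at least $k+2$ universal vertices, making every vertex (in particular the one of degree $d_1$) have degree at least $k+2$, contradicting $d_1\le k+1$. Hence $\delta(\hat{G})\ge k+2$.

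Now choose nonadjacent $u,v\in V(\hat{G})$ with $d(u)+d(v)$ maximum subject to $d(u)\le d(v)$, and set $d=d(u)$. The closure inequality gives $d(v)\le n+k-1-d$ and hence $d\le(n+k-1)/2<\tfrac12(n+k)$. The maximality of the pair $(u,v)$ forces every non-neighbor of $v$ (including $u$ itself) to have degree at most $d$, yielding at least $n-1-d(v)\ge d-k$ vertices of degree $\le d$; hence $d_{d-k}\le d$. Symmetrically, any vertex $z$ of degree $\ge n-d+k$ must lie in $\{u\}\cup N(u)$, as otherwise $d(z)+d(u)\ge n+k$ would violate the closure. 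Since $d(u)=d<n-d+k$ (using $d<\tfrac12(n+k)$), the vertex $u$ itself is excluded, so there are at most $d$ vertices of degree $\ge n-d+k$, and therefore $d_{n-d}\le n-d+k-1$.

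Finally I would apply~\eqref{eq:354} at $i=d$, which is admissible since $\delta(\hat{G})\ge k+2$ yields $d\ge k+1$, and $d<\tfrac12(n+k)$ was already established. The hypothesis $d_{i-k}=d_{d-k}\le d$ holds by the first bound above, so the conclusion forces $d_{n-d}\ge n-d+k$, directly contradicting the upper bound $d_{n-d}\le n-d+k-1$. The main obstacle I expect is the degree-sum closure step: the rotation-extension must be carried out while respecting the linear forest $F$, and careful bookkeeping is needed to show that the extra~$k$ in the degree-sum threshold is exactly what is required to absorb the $\le k$ protected edges. Once the closure is in hand, the remaining counting argument is a nearly verbatim adaptation of Chv\'{a}tal's original hamiltonicity proof, with $n$ replaced by $n+k$ and the relevant indices shifted by $k$.
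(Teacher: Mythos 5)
Your proposal is correct. Note that the paper itself gives no proof of Theorem~\ref{thm:354} -- it only cites Kronk~\cite{K69} and Berge~\cite{B73} and verifies weak optimality -- so there is nothing to compare against except the classical argument, which is exactly what you reconstruct: the key lemma that $k$-edge-hamiltonicity is $(n+k)$-stable is true (in the rotation at a Hamilton path $v_1\cdots v_n$ from $u$ to $v$ containing the linear forest $F$, the two index sets of sizes $d(u)$ and $d(v)$ inside $\{1,\ldots,n-1\}$ overlap in at least $d(u)+d(v)-(n-1)\ge k+1$ positions, so at least one avoids the at most $k$ positions carrying an edge of $F$), and your subsequent counting -- the minimum-degree bound $\delta\ge k+2$ from the case $i=k+1$, then $d_{d-k}\le d$ and $d_{n-d}\le n-d+k-1$ from a maximum nonadjacent pair, contradicting~\eqref{eq:354} at $i=d$ -- is a sound adaptation of Chv\'atal's argument with all index ranges checked correctly.
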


\noindent
If~$\pi$ fails to satisfy~\eqref{eq:354} for some~$i$, then~$\pi$ is
majorized by the degrees of
$K_i+\bigl(\overline{K_{i-k}}\cup K_{n-2i+k}\bigr)$, which is not
$k$-edge-hamiltonian (consider a path in~$K_i$ with~$k$ edges). Thus
Theorem~\ref{thm:354} is weakly optimal.

A graph~$G$ is \emph{pancyclic} if it contains an $l$-cycle for any~$l$
such that $3\le l\le|V(G)|$. We have the following best monotone condition
for~$\pi$ to be forcibly pancyclic.

\begin{theorem}\label{thm:355}\mbox{}\\*
  Let $\pi=(d_1\le\cdots\le d_n)$ be a graphical sequence with $n\ge3$. If
  \begin{subequations}
    \begin{gather}
      d_i\le i\; \Rightarrow\; d_{n-i}\ge n-i,\quad
      \text{for $1\le i<\tfrac12n$};\label{eq:355}\\
      d_n\ge\tfrac12n+1,\quad \text{if $n$ is even},\label{eq:356}
    \end{gather}
  \end{subequations}
  then~$\pi$ is forcibly pancyclic.
\end{theorem}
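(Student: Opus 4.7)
The plan is to derive Theorem 3.5.5 from Chv\'atal's hamiltonicity theorem (Theorem~\ref{thm:11}) together with Bondy's classical pancyclicity result, which says that Chv\'atal's condition forces pancyclicity apart from a single bipartite exception. First, note that condition (3.5.5) coincides verbatim with the Chv\'atal condition~(1.1), so Theorem~\ref{thm:11} already guarantees that every realization $G$ of $\pi$ is hamiltonian.

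To upgrade hamiltonicity to pancyclicity, I would invoke Bondy's theorem from \emph{Pancyclic graphs~I} (1971): any graph of order $n\ge3$ whose degree sequence satisfies Chv\'atal's condition is either pancyclic or isomorphic to $K_{n/2,n/2}$. This reduces the problem to excluding the bipartite exception. If $n$ is odd, the exception does not exist and we are done. If $n$ is even, then $K_{n/2,n/2}$ has all degrees equal to $n/2$, so its largest degree is $n/2$, which violates (3.5.6)'s requirement $d_n\ge\tfrac12n+1$. Hence $G\not\cong K_{n/2,n/2}$, and $G$ is pancyclic.

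For the weak optimality that justifies calling this a best monotone condition, I would verify the two majorization witnesses separately. If $\pi$ fails (3.5.5) at some index $i$, then $\pi$ is majorized by the degree sequence of Chv\'atal's non-hamiltonian graph $K_i+(\overline{K_i}\cup K_{n-2i})$, which, being non-hamiltonian, is certainly not pancyclic. If $n$ is even and $\pi$ fails (3.5.6), then $d_n\le\tfrac12n$, and $\pi$ is majorized by the degrees of $K_{n/2,n/2}$, which is triangle-free and hence not pancyclic.

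The main obstacle is that the argument leans entirely on Bondy's pancyclicity theorem in its Chv\'atal-condition form; without this citation one would need a self-contained derivation. The standard two-step substitute is to first show that (3.5.5) forces $|E(G)|\ge n^2/4$ via a pairing argument on the Chv\'atal indices $(d_i,d_{n-i})$, and then to apply Bondy's cleaner lemma that a hamiltonian graph with at least $n^2/4$ edges is pancyclic or $K_{n/2,n/2}$. The edge-counting step is routine but fiddly, and citing the direct degree-sequence version of Bondy's theorem avoids it cleanly.
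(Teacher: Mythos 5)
Your argument is essentially the paper's: both reduce to hamiltonicity via Theorem~\ref{thm:11} and then invoke the known result that Chv\'atal's condition forces pancyclicity up to a balanced bipartite exception, which condition~\eqref{eq:356} rules out since such a graph has $d_n\le\tfrac12n$. The only quibble is attribution: the degree-sequence version of the ``pancyclic or bipartite'' theorem you rely on is due to Bauer and Schmeichel~\cite{BS90} (Bondy's 1971 paper covers Ore's condition, not Chv\'atal's), and the edge-counting route you sketch as a fallback is in fact how~\cite{BS90} proves it.
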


\begin{proof} In~\cite{BS90}, it is shown that if~$\pi$
  satisfies~\eqref{eq:355}, then~$G$ is pancyclic or bipartite. But if~$G$
  is bipartite, then, since~$G$ is hamiltonian by~\eqref{eq:355} and
  Theorem~\ref{thm:11}, $n$ is even and both bipartition sets have
  $\tfrac12n$ vertices. Thus $d_n\le\tfrac12n$, which
  contradicts~\eqref{eq:356}.
\end{proof}

\noindent
If~$\pi$ fails to satisfy~\eqref{eq:355} for some~$i$, then~$\pi$ is
majorized by the degrees of $K_i+\bigl(\overline{K_i}\cup K_{n-2i}\bigr)$,
which has no $n$-cycle. If~$\pi$ fails to satisfy~\eqref{eq:356},
then~$\pi$ is majorized by the degrees of $K_{n/2,n/2}$, which has no odd
length cycles. Thus Theorem~\ref{thm:355} is weakly optimal.

\setcounter{section}{3}
\setcounter{subsection}{5}
\setcounter{equation}{0}
\setcounter{theorem}{0}
\subsection{Independence Number, Clique Number, Chromatic Number, and
  Vertex Arboricity}\label{sec3.6}

We consider best monotone conditions for a graphical sequence~$\pi$ to be
forcibly $p(G)\le k$ or forcibly $p(G)\ge k$, where~$p$ denotes any of the
graph parameters $\alpha$~(independence number), $\omega$~(clique number),
$\chi$~(chromatic number), or $a$~(vertex arboricity). Note that if
$p\in\{\omega,\chi,a\}$, then $p(G)\le k$ is a decreasing property and
$p(G)\ge k$ is an increasing property (so that we seek upper bounds
on~$\pi$ in the first case and lower bounds in the second); while if
$p=\alpha$ then it is the other way around.

We begin with best monotone conditions for upper bounds $p(G)\le k$. We
consider first best monotone conditions for $\alpha(G)\le k$ and
$\omega(G)\le k$. Since
$\omega(G)\le k\Leftrightarrow\alpha(\overline{G})\le k$, the development
is analogous for~$\alpha$ and~$\omega$; we do only~$\alpha$.

\begin{theorem}\label{thm:361}\mbox{}\\*
  Let $\pi=(d_1\le\cdots\le d_n)$ be a graphical sequence, and $k\ge1$ an
  integer. If
  \begin{equation}
    d_{k+1}\ge n-k,\label{eq:361}
  \end{equation}
  then~$\pi$ is forcibly $\alpha(G)\le k$.
\end{theorem}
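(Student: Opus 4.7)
The plan is to prove the contrapositive by a single counting argument. Suppose $G$ is any realization of $\pi$ with $\alpha(G) \ge k+1$, and let $I \subseteq V(G)$ be an independent set with $|I| = k+1$. Every vertex $v \in I$ has all of its neighbors outside $I$, hence $\deg_G(v) \le n - |I| = n - k - 1$. Therefore at least $k+1$ vertices of $G$ have degree at most $n-k-1$, which forces the $(k+1)$st entry of the sorted degree sequence to satisfy $d_{k+1} \le n-k-1$, contradicting hypothesis \eqref{eq:361}.

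The argument is essentially a one-line pigeonhole, so I do not anticipate any technical obstacle; the only thing to verify is that the statement really is a \emph{best monotone} condition in the sense of Section~\ref{sec2}. Monotonicity is immediate, since $\pi' \ge \pi$ implies $d'_{k+1} \ge d_{k+1} \ge n-k$. For weak optimality, if $\pi$ fails \eqref{eq:361} then $d_{k+1} \le n-k-1$, and so $\pi$ is majorized by the degree sequence $(n-k-1)^{k+1}(n-1)^{n-k-1}$ of the graph $K_{n-k-1}+\overline{K_{k+1}}$, which has independence number $k+1$ and is therefore not a realization of the property $\alpha(G)\le k$. Thus \eqref{eq:361} is the unique weakly optimal Chv\'atal-type condition needed, which also explains why a single inequality suffices here in contrast to the properties considered in the preceding subsections.
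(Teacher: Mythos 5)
Your argument is correct and is essentially identical to the paper's proof: both take an independent set of size $k+1$ in a hypothetical realization, observe each of its vertices has degree at most $n-k-1$, and conclude $d_{k+1}\le n-k-1$, contradicting \eqref{eq:361}. Your added verification of weak optimality via $\overline{K_{k+1}}+K_{n-k-1}$ also matches the remark the paper makes immediately after the theorem.
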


\begin{proof} Suppose~$\pi$ satisfies~\eqref{eq:361}, but has a
  realization~$G$ with $\alpha(G)\ge k+1$. If $S\subseteq V(G)$ is an
  independent set with $|S|\ge k+1$, then each vertex in~$S$ has degree at
  most $n-k-1$, and thus $d_{k+1}\le n-k-1$, contradicting~\eqref{eq:361}.
\end{proof}

\noindent
If~$\pi$ fails to satisfy~\eqref{eq:361}, then~$\pi$ is majorized by the
degrees of $G'=\overline{K_{k+1}}+K_{n-k-1}$, with $\alpha(G')=k+1>k$. Thus
Theorem~\ref{thm:361} is weakly optimal.

We also note that the optimal condition for $\alpha(G)\le k$ is tractable.
We begin with the following result of Rao~\cite{R79}.

\begin{theorem}[(Rao \cite{R79})]\label{thm:362}\mbox{}\\*
  A graphical sequence~$\pi$ has a realization $G$ with $\alpha(G)\ge k$ if
  and only if~$\pi$ has a realization in which vertices with the~$k$
  smallest degrees form an independent set.
\end{theorem}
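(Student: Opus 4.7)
The forward direction is immediate from the definition of $\alpha$: if some realization of $\pi$ exhibits the $k$ smallest-degree vertices as an independent set, that realization has independence number at least $k$. The work lies in the converse, and my plan is to start from an arbitrary realization with an independent $k$-set and improve it by a sequence of degree-preserving $2$-switches until the independent set consists of $k$ vertices whose degrees are the $k$ smallest in $\pi$.

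Formally, among all pairs $(G,I)$ with $G$ a realization of $\pi$ and $I$ an independent set of size $k$ in $G$, I would choose one minimizing the potential $\Phi(G,I)=\sum_{v\in I}d_G(v)$; such a pair exists by hypothesis. The claim is that $\Phi(G,I)=d_1+\cdots+d_k$, at which point labeling vertices in nondecreasing order of degree identifies $I$ with $\{v_1,\ldots,v_k\}$, as required. If the equality fails, then some $v\in I$ and $u\notin I$ satisfy $d(u)<d(v)$, and a contradiction to minimality will come from a swap lemma.

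The swap lemma says that for such $u,v$ there is a realization $G^*$ of $\pi$ in which $I^*:=(I\setminus\{v\})\cup\{u\}$ is independent. To build $G^*$, set
\[A=N_G(v)\setminus(N_G(u)\cup\{u\}),\qquad B=N_G(u)\setminus(N_G(v)\cup\{v\}).\]
Then $A\cap B=\varnothing$, and a direct count gives $|A|-|B|=d(v)-d(u)>0$, so there is an injection $\phi:B\to A$. For each $b\in B$, perform the $2$-switch that deletes the edges $ub$ and $v\phi(b)$ and inserts $u\phi(b)$ and $vb$; the required non-edges are guaranteed by the definitions of $A$ and $B$, and the injectivity of $\phi$ ensures the switches can be carried out sequentially without interfering with one another. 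The result $G^*$ is a realization of $\pi$ satisfying
\[N_{G^*}(u)\subseteq(N_G(u)\cap N_G(v))\cup\phi(B)\cup\{v\}\subseteq N_G(v)\cup\{v\}.\]
Because $v$ has no neighbors in $I\setminus\{v\}$ in $G$, this inclusion forces $u$ to have no neighbors in $I\setminus\{v\}$ in $G^*$; hence $I^*$ is independent, and $\Phi(G^*,I^*)=\Phi(G,I)-d(v)+d(u)<\Phi(G,I)$, contradicting minimality.

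The main obstacle is orchestrating the $2$-switches so that the neighborhood of $u$ in $G^*$ is absorbed into $N_G(v)\cup\{v\}$; once this is achieved, independence of $I^*$ is automatic. The inequality $|A|\ge|B|$, which is precisely what $d(u)<d(v)$ supplies, is the key ingredient making the injection $\phi$ available and the entire swap succeed.
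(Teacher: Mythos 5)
The paper does not prove this statement: it is quoted as a known result of Rao and used as a black box in the tractability discussion that follows, so there is no internal proof to compare against. Your argument is correct and self-contained. The easy direction is dispatched, and the substantive direction is a standard degree-preserving switching argument: minimize $\Phi(G,I)=\sum_{v\in I}d(v)$ over realizations carrying an independent $k$-set, and show that a pair $u\notin I$, $v\in I$ with $d(u)<d(v)$ can be exchanged. The swap lemma checks out: $A$ and $B$ are disjoint, $|A|-|B|=d(v)-d(u)$ because the indicator terms for the possible edge $uv$ cancel, the $2$-switches involve pairwise disjoint edge sets (injectivity of $\phi$ together with $A\cap B=\varnothing$), and after all switches $N_{G^*}(u)\subseteq N_G(v)\cup\{v\}$. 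Two points you leave implicit but which are immediate and worth stating: (i) every edge created or destroyed is incident to $u$ or to $v$, so $I\setminus\{v\}$ itself remains independent in $G^*$ --- this is needed in addition to the observation that $u$ acquires no neighbour in $I\setminus\{v\}$; and (ii) the step ``if equality fails then such a pair $u,v$ exists'' is the contrapositive of: if $d(u)\ge d(v)$ for all $u\notin I$, $v\in I$, then $I$ consists of $k$ vertices of minimum degree and $\Phi(G,I)=d_1+\cdots+d_k$. With those two sentences added the proof is complete; it is essentially the same exchange technique Rao uses for the complementary clique formulation.
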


\noindent
Using Theorem~\ref{thm:362}, it is easy to determine whether or not
$\pi=(d_1\le\cdots\le d_n)$ is forcibly $\alpha(G)\le k$: Iteratively
consider $k=1,2,\ldots,n-1$. To decide if~$\pi$ has a realization with
$k+1$ independent vertices, form the graph
$H=\overline{K_{k+1}}+K_{n-k-1}$, letting $v_1,\ldots,v_{k+1}$ (resp.,
$v_{k+2},\ldots,v_n$) denote the vertices of $\overline{K_{k+1}}$ (resp.,
$K_{n-k-1}$). Assign degree~$d_i$ to~$v_i$ for $1\le i\le n$, and determine
if~$H$ contains a subgraph~$H'$ with the assigned degrees. If so,
then~$\pi$ has a realization~$G$ with $\alpha(G)\ge k+1$, and~$\pi$ is not
forcibly $\alpha(G)\le k$. Otherwise, by Theorem~\ref{thm:362}, $\pi$ is
forcibly $\alpha(G)\le k$. Tutte~\cite{T54} proved the existence of~$H'$ is
equivalent to the existence of a $1$-factor in a graph that can be
efficiently constructed from~$H$ and $d_1,\ldots,d_n$.

Structural conditions guaranteeing $\chi(G)\le k$ (that~$G$ is
$k$-colorable) have a long and rich history \cite{B41,G68,SW68,W67}.
Regarding degree conditions, we first note the trivial bound
$\chi(G)\le\Delta(G)+1$, and thus

\begin{theorem}\label{thm:363}\mbox{}\\*
  The graphical sequence $\pi=(d_1\le\cdots\le d_n)$ is forcibly
  $\chi(G)\le d_n+1$.
\end{theorem}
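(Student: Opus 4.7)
The plan is to invoke the classical greedy-coloring bound $\chi(G)\le\Delta(G)+1$ and observe that $\Delta(G)=d_n$ for every realization of~$\pi$, so the inequality transfers directly to a forcibly condition.

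First, I would recall the greedy-coloring argument. Fix any ordering $v_1,v_2,\ldots,v_n$ of~$V(G)$ and color the vertices one at a time, assigning to~$v_j$ the smallest positive integer not yet used on any already-colored neighbor of~$v_j$. Because $v_j$ has at most~$\Delta(G)$ neighbors in~$G$, at most $\Delta(G)$ of the colors in $\{1,2,\ldots,\Delta(G)+1\}$ are forbidden at~$v_j$, so a legal color in that range is always available. This produces a proper coloring with at most $\Delta(G)+1$ colors, proving $\chi(G)\le\Delta(G)+1$.

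Second, I would translate this into the language of forcibly $P$-theorems. Any realization~$G$ of the graphical sequence $\pi=(d_1\le\cdots\le d_n)$ has maximum degree $\Delta(G)=d_n$, so $\chi(G)\le\Delta(G)+1=d_n+1$. Since this holds for every realization of~$\pi$, the sequence~$\pi$ is forcibly $\chi(G)\le d_n+1$, as claimed.

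There is really no obstacle here: Theorem~\ref{thm:363} is nothing more than a restatement of the trivial bound $\chi\le\Delta+1$ within the survey's framework. The interest lies not in the depth of its proof but in serving as a baseline sufficient condition against which the subsequent, sharper best-monotone conditions of Section~\ref{sec3.6} can be compared.
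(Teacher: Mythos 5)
Your proof is correct and matches the paper exactly: the paper derives Theorem~\ref{thm:363} as an immediate consequence of the trivial bound $\chi(G)\le\Delta(G)+1$ together with the observation that $\Delta(G)=d_n$ for every realization, which is precisely your argument (you merely spell out the greedy-coloring proof of that bound, which the paper takes as known).
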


\noindent
A best monotone condition for $\chi(G)\le k$ was given by Welsh and
Powell~\cite{WP67}.

\begin{theorem}[(Welsh \& Powell \cite{WP67})]\label{thm:364}\mbox{}\\*
  Let $\pi=(d_1\le\cdots\le d_n)$ be a graphical sequence. Then~$\pi$ is
  forcibly
  \[\chi(G)\le\max_{1\le j\le n}\min\{\,n-j+1,\,d_j+1\,\}.\]
\end{theorem}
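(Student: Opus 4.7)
The plan is to prove Theorem 3.6.4 by the classical greedy coloring argument of Welsh and Powell. I would start from any realization $G$ of $\pi$, label its vertices $v_1, v_2, \ldots, v_n$ so that $\deg(v_j) = d_j$ for each $j$, and then color the vertices one at a time in the reverse order $v_n, v_{n-1}, \ldots, v_1$ (i.e., in non-increasing order of degree), always assigning to each vertex the least positive integer not already used on one of its previously-colored neighbors.

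The heart of the proof is a two-sided bound on the color assigned to $v_j$. First, since $v_j$ has exactly $d_j$ neighbors in $G$, at most $d_j$ of them have been colored by the time we reach $v_j$, so the color used on $v_j$ is at most $d_j + 1$. Second, only the $n - j$ vertices $v_{j+1}, \ldots, v_n$ have been processed before $v_j$, so the color used on $v_j$ is also at most $n - j + 1$. Combining these, the color assigned to $v_j$ is at most $\min\{\,n - j + 1,\; d_j + 1\,\}$, and consequently the total number of colors the algorithm uses, which is an upper bound on $\chi(G)$, is at most $\max_{1 \le j \le n}\min\{\,n - j + 1,\; d_j + 1\,\}$, exactly as the theorem asserts.

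There is no serious obstacle here: the argument is entirely elementary and requires no graphical-sequence machinery beyond the initial relabeling of vertices. The only delicate point is the choice of ordering — processing vertices in non-increasing degree order is essential, since it is what aligns the ``$d_j$ neighbors'' bound with the ``$n - j$ predecessors'' bound to produce the stated minimum; any other ordering would break the indexing and give a weaker conclusion. (One might also note in passing that monotonicity of the condition is clear, because the quantity $\min\{n - j + 1, d_j + 1\}$ is nondecreasing in $d_j$, so enlarging $\pi$ can only weaken the resulting upper bound on $\chi$, consistent with the fact that $\chi(G) \le k$ is a decreasing property.)
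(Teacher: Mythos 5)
Your proof is correct and is precisely the standard greedy-colouring argument of Welsh and Powell that the paper cites for Theorem~3.6.4 (the survey itself gives no proof, only the reference). Both bounds on the colour of $v_j$ are justified properly, so there is nothing to add.
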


\noindent
Reexpressing Theorem~\ref{thm:364} with an equivalent Chv\'{a}tal-type
degree condition, we have the following.

\begin{theorem}\label{thm:365}\mbox{}\\*
  Let $\pi=(d_1\le\cdots\le d_n)$ be a graphical sequence, and let
  $1\le k\le n$. If (setting $d_0=0$)
  \begin{equation}\label{eq:362}
    d_{n-k}\le k-1,
  \end{equation}
  then~$\pi$ is forcibly $\chi(G)\le k$.
\end{theorem}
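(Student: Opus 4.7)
My plan is to derive Theorem~3.6.5 directly from the Welsh--Powell bound stated as Theorem~3.6.4. Theorem~3.6.4 tells us that every realization~$G$ of~$\pi$ satisfies
\[
\chi(G)\le\max_{1\le j\le n}\min\{\,n-j+1,\;d_j+1\,\},
\]
so it suffices to show that the hypothesis $d_{n-k}\le k-1$ forces this maximum to be at most~$k$. The whole argument is then a case split on the index~$j$ in the maximum, using only the monotonicity of~$\pi$.

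First I would dispose of the degenerate case $k=n$: then $n-k=0$ and the convention $d_0=0$ makes the hypothesis automatic, while every graph on~$n$ vertices is trivially $n$-colorable. So I may assume $1\le k\le n-1$. For the main case, fix any $j\in\{1,\dots,n\}$ and consider the two ranges separately. If $j\ge n-k+1$, then $n-j+1\le k$, so the first entry in the minimum is already at most~$k$. If instead $j\le n-k$, then by the monotonicity $d_1\le\cdots\le d_n$ and the hypothesis I get $d_j\le d_{n-k}\le k-1$, hence $d_j+1\le k$, so the second entry is at most~$k$. In either case $\min\{n-j+1,d_j+1\}\le k$, and taking the maximum over~$j$ yields $\chi(G)\le k$.

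There is really no obstacle here; the only thing to be careful about is the indexing at the boundary. When $n-k=0$ the second range is empty and the first range exhausts $\{1,\dots,n\}$, matching the convention $d_0=0$ introduced in the statement. When $n-k\ge 1$ both ranges are non-empty and together they cover $\{1,\dots,n\}$, which is exactly what the case split requires. Thus the theorem follows as an immediate corollary of Theorem~3.6.4, and the equivalence between the Welsh--Powell maximum formulation and the Chv\'{a}tal-type inequality $d_{n-k}\le k-1$ is simply the observation that the maximum drops to at most~$k$ precisely when the $(n-k)$-th term of~$\pi$ is at most $k-1$.
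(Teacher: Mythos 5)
Your proof is correct and follows exactly the route the paper intends: the paper presents Theorem~3.6.5 as a ``reexpression'' of the Welsh--Powell bound (Theorem~3.6.4) without spelling out the details, and your case split on $j\le n-k$ versus $j\ge n-k+1$ is precisely the verification that the hypothesis $d_{n-k}\le k-1$ forces $\max_j\min\{n-j+1,\,d_j+1\}\le k$. The handling of the boundary case $k=n$ via the convention $d_0=0$ is also consistent with the statement.
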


\noindent
If~$\pi$ fails to satisfy~\eqref{eq:362}, then~$\pi$ is minorized by the
vertex degrees of $G=K_{k+1}\cup\overline{K_{n-k-1}}$, with
$\chi(G)=k+1>k$. Thus Theorem~\ref{thm:365} is weakly optimal.

Analogous to the bound $\chi(G)\le\Delta(G)+1$, we have
$a(G)\le\bigl\lfloor\tfrac12\Delta(G)\bigr\rfloor+1$~\cite{CK69}, and thus
we get the following.

\begin{theorem}\label{thm:366}\mbox{}\\*
  Let $\pi=(d_1\le\cdots\le d_n)$ be a graphical sequence. Then~$\pi$ is
  forcibly $a(G)\le\bigl\lfloor\tfrac12d_n\bigr\rfloor+1$.
\end{theorem}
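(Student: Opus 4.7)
The plan is to observe that the statement is an immediate corollary of the structural bound $a(G)\le\bigl\lfloor\tfrac12\Delta(G)\bigr\rfloor+1$ of Chartrand and Kronk~\cite{CK69}, quoted in the paragraph preceding the theorem. The key point is simply that the quantity $\lfloor\tfrac12 d_n\rfloor+1$ on the right-hand side is determined by $\pi$ alone, while $\Delta(G)$ depends on the chosen realization. However, since $\pi$ is arranged in nondecreasing order, the largest entry $d_n$ coincides with $\Delta(G)$ for \emph{every} realization $G$ of $\pi$. This matching is what allows the structural bound to be promoted to a forcibly-degree bound.

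Concretely, I would argue as follows. Let $G$ be an arbitrary realization of $\pi=(d_1\le\cdots\le d_n)$. Then $\Delta(G)=d_n$, so by~\cite{CK69},
\[
a(G)\;\le\;\bigl\lfloor\tfrac12\Delta(G)\bigr\rfloor+1\;=\;\bigl\lfloor\tfrac12 d_n\bigr\rfloor+1.
\]
Since this holds for every realization of $\pi$, the sequence $\pi$ is forcibly $a(G)\le\bigl\lfloor\tfrac12 d_n\bigr\rfloor+1$, as claimed.

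There is essentially no obstacle here once the Chartrand--Kronk bound is available; the proof is the exact analogue, for vertex arboricity, of the one-line derivation of Theorem~\ref{thm:363} from the trivial bound $\chi(G)\le\Delta(G)+1$. If the aim were to further establish that Theorem~\ref{thm:366} is weakly optimal in the sense of Section~\ref{sec2}, one would need to exhibit, for each violating $\pi$, a non-$\bigl(a(G)\le\lfloor d_n/2\rfloor+1\bigr)$ realization majorizing it; but the statement as written asks only for the sufficient condition, and that follows in a single line.
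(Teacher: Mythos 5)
Your proposal is correct and matches the paper's own (implicit) argument exactly: the paper derives Theorem~\ref{thm:366} from the Chartrand--Kronk bound $a(G)\le\bigl\lfloor\tfrac12\Delta(G)\bigr\rfloor+1$ in precisely the same one-line fashion as Theorem~\ref{thm:363} is derived from $\chi(G)\le\Delta(G)+1$, using $\Delta(G)=d_n$ for every realization. Nothing further is needed.
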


\noindent
A best monotone condition for $a(G)\le k$ was given in~\cite{HS89}; it is
analogous to Theorem~\ref{thm:364}.

\begin{theorem}[(Hakimi \& Schmeichel \cite{HS89})]\label{thm:367}
  \mbox{}\\*
  Let $\pi=(d_1\le\cdots\le d_n)$ be a graphical sequence. Then~$\pi$ is
  forcibly
  \[a(G)\le\max_{1\le j\le n}\min\bigl\{\,
  \bigl\lceil\tfrac12(n-j+1)\bigr\rceil,\,
  \bigl\lceil\tfrac12(d_j+1)\bigr\rceil\,\bigr\}.\]
\end{theorem}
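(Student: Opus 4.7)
The plan is to mirror the Welsh--Powell argument behind Theorem~\ref{thm:364}, replacing ``greedy coloring'' by ``greedy partition into forests.'' Let $G$ be any realization of $\pi$. List its vertices as $v_1,v_2,\ldots,v_n$ in nonincreasing order of degree, so that $d_G(v_j)=d_{n-j+1}$ for each~$j$. I will process the vertices in this order, maintaining a partition of the already-processed vertices into classes $F_1,F_2,\ldots$, each inducing a forest, and show that $\bigl\lceil\tfrac12(m+1)\bigr\rceil$ classes suffice at step~$j$, where $m$ is the number of previously-processed neighbors of~$v_j$.

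The key structural observation is: if $v_j$ has at most one neighbor in some class~$F_i$, then $F_i\cup\{v_j\}$ still induces a forest, since adding a vertex together with at most one incident edge to an existing forest cannot create a cycle. A pigeonhole argument then handles the counting: if the $m$ previously-processed neighbors of~$v_j$ are distributed among~$k$ classes, some class contains at most $\lfloor m/k\rfloor$ of them, so the condition ``at most one neighbor in some class'' is guaranteed once $2k>m$, equivalently $k\ge\bigl\lceil\tfrac12(m+1)\bigr\rceil$. Hence an online greedy assignment succeeds using this many classes at step~$j$.

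It remains to convert this into the stated bound. At step~$j$ we have $m\le\min\{j-1,d_G(v_j)\}$, so $m+1\le\min\{j,d_G(v_j)+1\}$. Using the elementary identity $\bigl\lceil\tfrac12\min\{a,b\}\bigr\rceil=\min\bigl\{\lceil a/2\rceil,\lceil b/2\rceil\bigr\}$ for nonnegative integers (immediate from monotonicity of the ceiling), the count at step~$j$ is at most
\[
\min\bigl\{\bigl\lceil\tfrac12 j\bigr\rceil,\bigl\lceil\tfrac12(d_G(v_j)+1)\bigr\rceil\bigr\}.
\]
Taking the maximum over $j$, substituting $d_G(v_j)=d_{n-j+1}$, and reindexing via $j'=n-j+1$ converts $\bigl\lceil\tfrac12 j\bigr\rceil$ into $\bigl\lceil\tfrac12(n-j'+1)\bigr\rceil$ and yields the bound on~$a(G)$ stated in the theorem.

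I do not expect a significant obstacle; the work is essentially bookkeeping. The only place to be careful is the pigeonhole threshold $k\ge\bigl\lceil\tfrac12(m+1)\bigr\rceil\Longleftrightarrow 2k>m$, which must be verified on the ``boundary'' values of~$m$ (e.g.\ $m=2k-1$ versus $m=2k$) to be sure the algorithm really can always find an admissible class; and the ceiling--min identity, which behaves as expected but deserves a sentence of justification. No new combinatorial insight beyond the standard greedy framework is needed, and the construction runs equally well on any realization of~$\pi$, so the bound is indeed a \emph{forcible} property of the sequence.
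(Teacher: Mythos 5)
Your greedy argument is correct: the pigeonhole threshold $2k>m\Leftrightarrow k\ge\bigl\lceil\tfrac12(m+1)\bigr\rceil$ checks out, the bound $m\le\min\{j-1,d_G(v_j)\}$ and the reindexing $j\mapsto n-j+1$ deliver exactly the stated expression, and the construction applies to every realization of $\pi$. The paper itself gives no proof, citing Hakimi and Schmeichel \cite{HS89}, and your Welsh--Powell-style greedy partition into induced forests is precisely the standard argument for this bound, so nothing further is needed.
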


\noindent
Reexpressing Theorem~\ref{thm:367} with an equivalent Chv\'{a}tal-type
degree condition, we have the following.

\begin{theorem}\label{thm:368}\mbox{}\\*
  Let $\pi=(d_1\le\cdots\le d_n)$ be a graphical sequence, and let
  $1\le k\le\tfrac12n$. If
  \begin{equation}\label{eq:363}
    d_{n-2k}\le 2k-1,
  \end{equation}
  then~$\pi$ is forcibly $a(G)\le k$.
\end{theorem}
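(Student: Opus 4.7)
The plan is to deduce Theorem~3.6.8 directly from Theorem~3.6.7 (Hakimi \& Schmeichel), exactly mirroring how Theorem~3.6.5 is obtained as a Chv\'{a}tal-type reformulation of Theorem~3.6.4 for chromatic number. So I would assume $d_{n-2k}\le 2k-1$ and argue that the bound supplied by Theorem~3.6.7 is at most $k$.

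Concretely, the first step is to reduce the claim to showing
\[\max_{1\le j\le n}\min\bigl\{\,\lceil\tfrac12(n-j+1)\rceil,\,
\lceil\tfrac12(d_j+1)\rceil\,\bigr\}\le k,\]
i.e.\ that for every $j\in\{1,\ldots,n\}$ at least one of the two ceilings is at most $k$. The second step is a single case split around the hypothesis index $j=n-2k$. If $j\ge n-2k+1$, then $n-j+1\le 2k$ and hence $\lceil\tfrac12(n-j+1)\rceil\le k$; if instead $j\le n-2k$, then by the nondecreasing ordering and the hypothesis, $d_j\le d_{n-2k}\le 2k-1$, so $d_j+1\le 2k$ and thus $\lceil\tfrac12(d_j+1)\rceil\le k$. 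Either case produces the desired inequality, so Theorem~3.6.7 yields $a(G)\le k$.

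There is really no main obstacle — the argument is a routine bookkeeping exercise once one recognizes the structure. The only mild subtlety is making sure the split is exhaustive and that the two cases meet cleanly at $j=n-2k$ (which forces the hypothesis to be $d_{n-2k}\le 2k-1$ rather than a strict inequality, and explains the bound $k\le\tfrac12 n$ in the statement, which guarantees the range $j\le n-2k$ is nonempty). One could alternatively give a direct proof by contradiction: if $a(G)\ge k+1$, invoke the structural lower bound behind Theorem~3.6.7 to produce $2k+1$ vertices each of degree at least $2k$, forcing $d_{n-2k}\ge 2k$ and contradicting the hypothesis; but routing through Theorem~3.6.7 is cleaner and fits the pattern of Section~3.6.
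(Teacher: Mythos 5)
Your proposal is correct and matches the paper's approach: the paper obtains Theorem~3.6.8 precisely by ``reexpressing Theorem~3.6.7 with an equivalent Chv\'{a}tal-type degree condition,'' and your two-case split at $j=n-2k$ supplies exactly the bookkeeping the paper leaves implicit.
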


\noindent
If~$\pi$ fails to satisfy~\eqref{eq:363}, then~$\pi$ is minorized by the
vertex degrees of $G=K_{2k+1}\cup\overline{K_{n-2k-1}}$, with $a(G)=k+1>k$.
Thus Theorem~\ref{thm:368} is weakly optimal.

We turn next to best monotone conditions for lower bounds $p(G)\ge k$. The
most prominent degree condition for $\alpha(G)\ge k$, although not best
monotone, is independently due to Caro~\cite{C79} and Wei~\cite{W81}. An
elegant probabilistic proof appears in \cite[p.~81]{AS92} (see also
\cite[p.~428]{W01}).

\begin{theorem}[(Caro \cite{C79}, Wei \cite{W81})]\label{thm:369}
  \mbox{}\\*
  Let $\pi=(d_1\le\cdots\le d_n)$ be a graphical sequence. Then~$\pi$ is
  forcibly $\alpha(G)\ge\sum\limits_{j=1}^n\dfrac{1}{d_j +1}$.
\end{theorem}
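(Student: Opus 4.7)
The plan is to apply the probabilistic method to an arbitrary realization $G$ of $\pi$. First I would fix such a realization and consider a uniformly random linear ordering $\sigma$ of $V(G)$. From this ordering I would build a candidate independent set
\[
I_\sigma=\{\,v\in V(G)\mid \sigma(v)<\sigma(u)\text{ for all }u\in N(v)\,\},
\]
consisting of those vertices that precede all of their neighbors under~$\sigma$.

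The next step is to verify that $I_\sigma$ is always independent: if adjacent vertices $u,v$ both lay in $I_\sigma$, then each would be required to precede the other under~$\sigma$, an immediate contradiction. Hence $\alpha(G)\ge|I_\sigma|$ deterministically, so the same inequality holds in expectation.

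I would then compute $\mathbb{E}[|I_\sigma|]$ via linearity. For each $v\in V(G)$, the event $\{v\in I_\sigma\}$ is determined entirely by the relative ranking of $v$ among the $d(v)+1$ vertices of its closed neighborhood $\{v\}\cup N(v)$, and by the uniformity of $\sigma$ each of those $d(v)+1$ vertices is equally likely to come first. Thus $\Pr[v\in I_\sigma]=\tfrac{1}{d(v)+1}$, and
\[
\mathbb{E}[|I_\sigma|]=\sum_{v\in V(G)}\frac{1}{d(v)+1}=\sum_{j=1}^{n}\frac{1}{d_j+1}.
\]
Since $|I_\sigma|$ must attain a value at least its expectation on some outcome, there exists an ordering giving an independent set of size at least $\sum_{j=1}^{n}1/(d_j+1)$. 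As $G$ was an arbitrary realization of $\pi$, this establishes the claim.

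The only real step of substance is recognizing that the closed-neighborhood symmetry argument works cleanly: a uniformly random permutation of $V(G)$ induces a uniform distribution on the $(d(v)+1)!$ relative orderings of $\{v\}\cup N(v)$, so the marginal probability that $v$ ranks first among them is indeed $1/(d(v)+1)$. Once that observation is in hand, linearity of expectation finishes the proof immediately, and no case analysis or extremal construction is required.
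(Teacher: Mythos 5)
Your proof is correct and is precisely the standard probabilistic argument (random permutation, closed-neighborhood symmetry, linearity of expectation) that the paper itself points to in \cite[p.~81]{AS92} rather than reproducing. No discrepancies to report.
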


\noindent
A best monotone condition for $\alpha(G)\ge k$ was given by
Murphy~\cite{M91}. Let $\pi=(d_1\le\cdots\le d_n)$ be a graphical sequence.
Define $f:\mathbb{Z}^+\rightarrow\{d_1,d_2,\ldots,d_n,\infty\}$ recursively
as follows: Set $f(1)=d_1$. If $f(i)=d_j$, then set
\[f(i+1)=\begin{cases}
  d_{j+f(i)+1},&\text{if $j+f(i)+1\le n$};\\
  \infty,&\text{otherwise};
\end{cases}\]
while if $f(i)=\infty$, then $f(i+1)=\infty$.

Murphy's condition is the following.

\begin{theorem}[(Murphy \cite{M91})]\label{thm:3610}\mbox{}\\*
  Let $\pi=(d_1\le\cdots\le d_n)$ be a graphical sequence. Then~$\pi$ is
  forcibly $\alpha(G)\ge\max\{\,i\in\mathbb{Z}^+\mid\linebreak[1]
  f(i)<\infty\,\}$.
\end{theorem}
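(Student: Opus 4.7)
The plan is to establish the bound by greedily constructing an independent set in any realization $G$ of $\pi$. Label the vertices of $G$ as $u_1,u_2,\ldots,u_n$ with $\deg(u_i)=d_i$, and build $S_1\subseteq S_2\subseteq\cdots$ iteratively: set $S_1=\{u_1\}$, and at step $i+1$, having produced an independent set $S_i=\{u_{j_1},\ldots,u_{j_i}\}$ with $j_1<\cdots<j_i$, let $j_{i+1}$ be the \emph{smallest} index with $u_{j_{i+1}}\notin S_i\cup N(S_i)$ (if such an index exists), and set $S_{i+1}=S_i\cup\{u_{j_{i+1}}\}$. Each $S_i$ is independent by construction.

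To connect this procedure to Murphy's function, I introduce the auxiliary ``position'' sequence $\hat f(1)=1$ and $\hat f(i+1)=\hat f(i)+d_{\hat f(i)}+1$ (with the convention $d_j=\infty$ for $j>n$). Unrolling this recursion gives the telescoping identity $\hat f(i+1)=(i+1)+\sum_{l=1}^{i}f(l)$ together with $f(i)=d_{\hat f(i)}$, so the condition $f(i)<\infty$ is equivalent to $\hat f(i)\le n$, and $k:=\max\{i:f(i)<\infty\}$ is exactly the largest index with $\hat f(i)\le n$.

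The core step is to show by induction on $i$ that $j_i\le\hat f(i)$, and consequently $d_{j_i}\le f(i)$, for every $i$ with $f(i)<\infty$. The base case $j_1=1=\hat f(1)$ is immediate. For the inductive step, independence of $S_i$ yields $|S_i\cup N(S_i)|=i+|N(S_i)|\le i+\sum_{l=1}^{i}d_{j_l}\le i+\sum_{l=1}^{i}f(l)=\hat f(i+1)-1$, using the inductive hypothesis to replace each $d_{j_l}$ by $f(l)$. Hence among the $\hat f(i+1)$ vertices $u_1,\ldots,u_{\hat f(i+1)}$ at most $\hat f(i+1)-1$ are blocked, so at least one remains available and $j_{i+1}\le\hat f(i+1)$. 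Iterating, the greedy procedure runs for $k$ steps and produces an independent set of size $k$ in $G$. Since $G$ was an arbitrary realization of $\pi$, this shows $\pi$ is forcibly $\alpha(G)\ge k$.

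The only mildly delicate point is the bookkeeping that recasts Murphy's degree recursion as a recursion on indices and establishes the identity $\hat f(i+1)=(i+1)+\sum_{l\le i}f(l)$; once this is in hand, the counting estimate $|S_i\cup N(S_i)|\le\hat f(i+1)-1$ is essentially forced, and the induction closes with no additional input. I do not anticipate any serious extremal obstacle beyond this reindexing.
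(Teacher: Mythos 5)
The paper does not prove this theorem; it is quoted from Murphy~\cite{M91} with only the definition of $f$, an example, and a separate weak-optimality discussion. Your argument is a correct, self-contained proof: the reindexing $\hat f(1)=1$, $\hat f(i+1)=\hat f(i)+d_{\hat f(i)}+1$ faithfully encodes Murphy's recursion, the telescoped identity $\hat f(i+1)=(i+1)+\sum_{l\le i}f(l)$ is right, and the counting step $|S_i\cup N(S_i)|\le i+\sum_{l\le i}d_{j_l}\le\hat f(i+1)-1$ (valid since $S_i$ independent gives $S_i\cap N(S_i)=\varnothing$, and $j_l\le\hat f(l)$ with the degrees sorted gives $d_{j_l}\le f(l)$) correctly forces $j_{i+1}\le\hat f(i+1)$ whenever $f(i+1)<\infty$. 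This minimum-degree-first greedy is the standard route to Murphy's bound, so there is nothing to flag.
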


\noindent
\textbf{Example}. If $\pi=1^54^26^27^3$, then $f(1)=d_1=1$, $f(2)=d_3=1$,
$f(3)=d_5=1$, $f(4)=d_7=4$, $f(5)=d_{12}=7$, and $f(6)=f(7)=\cdots=\infty$.
The calculation of the $f(i)$ can be nicely visualized, as shown below.

\begin{center}
  \begin{picture}(300,45)
    \put(11,5){$1 \; \; \; \; 1 \; \; \; \; 1 \; \; \; \;  1 \; \; \; \;  1 \; \; \; \;  4 \; \; \; \; 4 \; \; \; \; 6 \; \; \; \; 6 \; \; \; \; 7 \; \; \; \; 7 \; \; \; \; 7$} \put(13.6,9){\circle{13}}
    \put(49.5,9){\circle{13}} \put(84.5,9){\circle{13}}
    \put(120,9){\circle{13}} \put(208,9){\circle{13}}
    \thinlines \qbezier(16,18)(31.5,28)(47,18)
    \qbezier(52,18)(67.5,28)(83,18) \qbezier(88,18)(103,28)(117,18)
    \qbezier(123,18)(165,28)(205,18) \qbezier(211,18)(239,28)(271,18)
    \put(22,35){{\tiny $1+1$}} \put(17,28){{\tiny positions}}
    \put(58,35){{\tiny $1+1$}} \put(53,28){{\tiny positions}}
    \put(94,35){{\tiny $1+1$}} \put(89,28){{\tiny positions}}
    \put(155,35){{\tiny $4+1$}} \put(150,28){{\tiny positions}}
    \put(233,35){{\tiny $7+1$}} \put(228,28){{\tiny positions}}
    \thicklines \put(47,18){\vector(2,-1){3}}
    \put(83,18){\vector(2,-1){3}} \put(117,18){\vector(2,-1){3}}
    \put(205,18){\vector(2,-1){3}} \put(271,18){\vector(2,-1){3}}
  \end{picture}
\end{center}

\noindent
So by Theorem~\ref{thm:3610}, $\pi$ is forcibly $\alpha(G)\ge5$ (the number
of circled vertices).\qquad\hspace*{\fill}$\triangle$

\medskip
\noindent
By comparison, Theorem~\ref{thm:369} guarantees only that~$\pi$ in the
above example is forcibly $\alpha(G)\ge4$. Indeed, Theorem~\ref{thm:3610}
can be arbitrarily better than Theorem~\ref{thm:369}: For the graphical
sequence $\pi=1^12^23^3\cdots d^d$ with $d\equiv 0\pmod4$,
Theorem~\ref{thm:369} (resp., Theorem~\ref{thm:3610}) guarantees that~$\pi$
is forcibly
$\alpha(G)\ge\sum\limits_{i=1}^d\Bigl(1-\dfrac{1}{i+1}\Bigr)\sim d-\ln d$
(resp., $\alpha(G)\ge d$).

To see that Theorem~\ref{thm:3610} is weakly optimal for $\alpha(G)\ge k$,
suppose Theorem~\ref{thm:3610} fails to guarantee that~$\pi$ is forcibly
$\alpha(G)\ge k$. Consider the degree sequence
$\pi'=f(1)^{f(1)+1}f(2)^{f(2)+1}\linebreak[1]\cdots\linebreak[1]
f(k-2)^{f(k-2)+1}l^{l+1}$, where~$l$ denotes the number of degrees in $\pi$
with index greater than $f(k-1)$. Note that $l\le f(k-1)$, since
$l\ge f(k-1)+1$ implies $f(k)<\infty$, contradicting that
Theorem~\ref{thm:3610} does not declare~$\pi$ forcibly $\alpha(G)\ge k$.
Thus~$\pi'$ minorizes~$\pi$. But~$\pi'$ has realization
$G'=K_{f(1)+1}\cup\cdots\cup K_{f(k-2)+1}\cup K_{l+1}$ consisting of $k-1$
disjoint cliques, with $\alpha(G')=k-1<k$. Thus Theorem~\ref{thm:3610} is
weakly optimal for $\alpha(G)\ge k$.

Using Theorem~\ref{thm:3610}, we can easily obtain best monotone conditions
for $\omega(G)\ge k$ and $\chi(G)\ge k$. Let $\pi=(d_1\le\cdots\le d_n)$ be
a graphical sequence, and define
$g(\pi)=\max\{\,i\in\mathbb{Z}^+\mid f(i)<\infty\,\}$ as above (so
that~$\pi$ is forcibly $\alpha(G)\ge g(\pi)$). Define
$h:\{\,\text{Graphical Sequences}\,\}\rightarrow\mathbb{Z}^+$ by
$h(\pi)=g(\overline{\pi})$, where
$\overline{\pi}=((n-1)-d_n\le\cdots\le (n-1)-d_1)$ is the degree sequence
complementary to~$\pi$. If $G,\overline{G}$ are arbitrary realizations of
$\pi,\overline{\pi}$, then
$h(\pi)=g(\overline{\pi})\le\alpha(\overline{G})=\omega(G)\le\chi(G)$.
Since~$g$ is monotone decreasing, $h$ is monotone increasing. So to prove
that $\omega(G)\ge h(\pi)$ and $\chi(G)\ge h(\pi)$ are best monotone lower
bounds, it suffices to show these lower bounds are weakly optimal. But if
$h(\pi)= g(\overline{\pi})\le k-1$, then as above there exists
a~$\overline{\pi}'$ minorizing~$\overline{\pi}$ with a
realization~$\overline{G}'$ consisting of $k-1$ disjoint cliques. So
$\pi'\ge\pi$ has a realization $G'=\overline{\overline{G}'}$ that is a
complete $(k-1)$-partite graph. Thus $\omega(G'),\chi(G')=k-1<k$, and the
above lower bounds for~$\omega$ and~$\chi$ are weakly optimal.

Finally, if~$P$ is the property $a(G)\ge k$, it was proved in~\cite{HNVAVD}
that $|S(P,n)|$ grows superpolynomially in~$n$. Indeed,
$|S(P,n)|\ge D\Bigl(\Bigl\lfloor\dfrac{n}{k-1}\Bigr\rfloor\Bigr)$ if $k-1$
divides $n$, where~$D(m)$ denotes the number of different degree sequences
of unlabeled $m$-vertex trees. We refer the reader to~\cite{HNVAVD} for
details. But $D(m)=p(m-2)$ \cite[Chapter~6, Theorem~8]{B73} grows
superpolynomially in~$m$, where~$p$ is the integer partition function.
Thus, any best monotone condition for~$P$ will be inherently complex.

\setcounter{section}{3}
\setcounter{equation}{0}
\setcounter{theorem}{0}
\renewcommand{\thetheorem}{\arabic{section}.\arabic{theorem}}
\section{Best Monotone Degree Conditions for Implications
  \boldmath$P_1\Rightarrow P_2$}\label{sec4}

In this section, we consider best monotone degree conditions for
$P_1\Rightarrow P_2$, where $P_1,P_2$ are monotone increasing graph
properties. Conditions of this type were first considered
in~\cite{BNSWY14}. A framework for such considerations is given in
Section~\ref{sec2} upon substituting $P_1\Rightarrow P_2$ for~$P$
throughout. Best monotone $P_1\Rightarrow P_2$ conditions are particularly
interesting when~$P_1$ is a necessary condition for~$P_2$, since they
provide the minimum degree strength which needs to be added to~$P_1$ to get
a sufficient condition for~$P_2$.

In this section, we will focus on best monotone degree conditions for
$P_1\Rightarrow P_2$ -- some proved, some conjectured -- when~$P_2$ is
`hamiltonian', and~$P_1$ belongs to the set \{\,`traceable',
`$2$-connected', `$1$-binding', `contains a $2$-factor', `$1$-tough'\,\} of
well-known necessary conditions for hamiltonicity.

We begin with two results which are essentially immediate corollaries of
Theorem~\ref{thm:11}.

\begin{theorem}\label{thm:41}\mbox{}\\*
  Let $\pi=(d_1\le\cdots\le d_n)$ be a graphical sequence, with $n\ge3$. If
  \begin{equation}\label{eq:41}
    d_i\le i\; \Rightarrow\; d_{n-i}\ge n-i,\quad
    \text{for $1\le i\le\tfrac12(n-1)$},
  \end{equation}
  then every traceable realization of~$\pi$ is hamiltonian.
\end{theorem}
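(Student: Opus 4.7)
The key observation is that condition~\eqref{eq:41} is literally condition~\eqref{eq:11} of Theorem~\ref{thm:11}. So the proof of the implication in Theorem~\ref{thm:41} is immediate: any graphical sequence $\pi$ satisfying~\eqref{eq:41} is forcibly hamiltonian by Chv\'{a}tal's theorem, and therefore \emph{every} realization of $\pi$ -- traceable or not -- is hamiltonian. In particular, every traceable realization is hamiltonian. So there is nothing to prove beyond quoting Theorem~\ref{thm:11}; the statement is a corollary.

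What requires substantive discussion is that~\eqref{eq:41} is a \emph{best monotone} degree condition for the implication ``traceable $\Rightarrow$ hamiltonian'' in the sense developed in Section~\ref{sec2}. Monotonicity of~\eqref{eq:41} is inherited from Theorem~\ref{thm:11}. For weak optimality, I would argue as follows. Suppose~$\pi$ fails~\eqref{eq:41} at some index~$i$ with $1\le i\le\tfrac12(n-1)$. As already noted in Section~\ref{sec1} for Chv\'{a}tal's theorem, $\pi$ is then majorized by the degree sequence of
\[G'\;=\;K_i+\bigl(\overline{K_i}\cup K_{n-2i}\bigr),\]
and $G'$ is nonhamiltonian (removing the $i$ vertices of $K_i$ leaves $i+1$ components since $i\ge1$ implies $\overline{K_i}$ has $\ge 1$ isolated vertex and $K_{n-2i}$ is another component, so $G'$ is not even $1$-tough). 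So for weak optimality it suffices to verify that $G'$ is nonetheless \emph{traceable}.

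To see this, label the $i$ vertices of~$\overline{K_i}$ as $v_1,\dots,v_i$, the $i$ vertices of $K_i$ as $u_1,\dots,u_i$, and the $n-2i$ vertices of $K_{n-2i}$ as $w_1,\dots,w_{n-2i}$. Since every $u_j$ is adjacent to every other vertex of $G'$, and since $w_1,\dots,w_{n-2i}$ form a clique, the sequence
\[v_1,\,u_1,\,v_2,\,u_2,\,\dots,\,v_i,\,u_i,\,w_1,\,w_2,\,\dots,\,w_{n-2i}\]
is a hamiltonian path of $G'$. (If $n=2i$, just omit the $w$-segment.) Thus $G'$ is a traceable, nonhamiltonian realization of a sequence majorizing $\pi$, establishing weak optimality.

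The only ``obstacle'' is recognizing the right graph $G'$ and verifying the alternating construction gives a hamiltonian path; both are essentially bookkeeping. The real content of the theorem is conceptual: Chv\'{a}tal's condition, viewed as a best monotone degree condition, is exactly as strong as what is needed to upgrade ``traceable'' to ``hamiltonian.''
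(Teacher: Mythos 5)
Your proposal is correct and matches the paper's treatment: the paper likewise observes that \eqref{eq:41} is identical to Chv\'{a}tal's condition \eqref{eq:11} (so the implication is an immediate corollary of Theorem~\ref{thm:11}), and establishes weak optimality with the same extremal graph $K_i+\bigl(\overline{K_i}\cup K_{n-2i}\bigr)$, noted to be traceable and nonhamiltonian. Your explicit hamiltonian path verifying traceability is a correct (and slightly more detailed) version of what the paper leaves to the reader.
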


\noindent
Note that~\eqref{eq:41} is the same degree condition as in
Theorem~\ref{thm:11}. If~$\pi$ fails to satisfy~\eqref{eq:41} for some~$i$,
then~$\pi$ is majorized by the degrees of
$K_i+\bigl(\overline{K_i}\cup K_{n-2i}\bigr)$, which is traceable and
nonhamiltonian. Thus Theorem~\ref{thm:41} is weakly optimal for
$\text{traceable}\Rightarrow\text{hamiltonian}$.

\begin{theorem}\label{thm:42}\mbox{}\\*
  Let $\pi=(d_1\le\cdots\le d_n)$ be a graphical sequence, with $n\ge3$. If
  \begin{equation}\label{eq:42}
    d_i\le i\; \Rightarrow\; d_{n-i}\ge n-i,\quad
    \text{for $2\le i\le\tfrac12(n-1)$},
  \end{equation}
  then every $2$-connected realization of~$\pi$ is hamiltonian.
\end{theorem}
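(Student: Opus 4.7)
The plan is to derive Theorem~\ref{thm:42} as an almost immediate corollary of Chv\'{a}tal's Theorem~\ref{thm:11}. The key observation is that conditions~\eqref{eq:11} and~\eqref{eq:42} differ only in the range of the index: \eqref{eq:42} omits the case $i=1$. But for a $2$-connected realization $G$ of~$\pi$ we have $\delta(G)\ge 2$, so $d_1\ge 2$, meaning the hypothesis ``$d_1\le 1$'' of the $i=1$ instance of~\eqref{eq:11} is false. Hence the $i=1$ instance holds vacuously, the remaining instances hold by~\eqref{eq:42}, and Theorem~\ref{thm:11} applies directly to conclude that $G$ is hamiltonian.

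More precisely, I would argue as follows. Fix a $2$-connected realization~$G$ of $\pi=(d_1\le\cdots\le d_n)$. Since $\kappa(G)\ge 2$ forces $\delta(G)\ge 2$, the sequence~$\pi$ in fact satisfies $d_1\ge 2$, and then the full Chv\'{a}tal condition~\eqref{eq:11} over $1\le i\le\tfrac12(n-1)$ is satisfied (vacuously at $i=1$, by~\eqref{eq:42} for larger $i$). Applying Theorem~\ref{thm:11} to $\pi$ yields the conclusion. Note that nothing here depends on the particular realization~$G$: the conclusion of Theorem~\ref{thm:11} is that every realization of such a $\pi$ is hamiltonian, so in particular $G$ is.

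For weak optimality, the plan is to exhibit, for each index $i$ with $2\le i\le\tfrac12(n-1)$ at which~\eqref{eq:42} fails, a $2$-connected non-hamiltonian graph whose degree sequence majorizes~$\pi$. The natural candidate is $G'=K_i+(\overline{K_i}\cup K_{n-2i})$, which is precisely the witness used for Theorem~\ref{thm:11}. I would verify two properties of $G'$: (a) $G'$ is $2$-connected because its only vertex cut separating $\overline{K_i}$ from $K_{n-2i}$ is the set $K_i$ itself, of size $i\ge 2$; (b) $G'$ is not hamiltonian because removing the $i$ vertices of $K_i$ leaves $i+1$ components (the $i$ isolated vertices of $\overline{K_i}$ plus $K_{n-2i}$), so $G'$ is not $1$-tough.

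There is no real obstacle here: the content is entirely captured by Chv\'{a}tal's theorem, and the only genuine step is the trivial observation that $2$-connectedness rules out the troublesome $i=1$ case. The only point that merits any care is checking that the standard Chv\'{a}tal extremal graph $K_i+(\overline{K_i}\cup K_{n-2i})$ remains $2$-connected (rather than merely connected) precisely because the restricted range $i\ge 2$ excludes the degenerate witness $K_1+(\overline{K_1}\cup K_{n-2})$ of the original theorem, which would only be $1$-connected.
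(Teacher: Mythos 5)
Your proposal is correct and follows exactly the route the paper intends: the paper presents Theorem~\ref{thm:42} as an ``essentially immediate corollary'' of Theorem~\ref{thm:11}, the point being that a $2$-connected realization forces $d_1\ge2$, which makes the omitted $i=1$ instance of~\eqref{eq:11} vacuous, and the weak-optimality witness $K_i+(\overline{K_i}\cup K_{n-2i})$ with $i\ge2$ is the same one the paper uses. Nothing further is needed.
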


\noindent
If~$\pi$ fails to satisfy~\eqref{eq:42} for some~$i$, then~$\pi$ is
majorized by the degree sequence of
$K_i+\bigl(\overline{K_i}\cup K_{n-2i}\bigr)$, which is $2$-connected
(since $i\ge2$) and nonhamiltonian. Thus Theorem~\ref{thm:42} is weakly
optimal for $\text{$2$-connected}\Rightarrow\text{hamiltonian}$.

We have the following best monotone condition for
$\text{$1$-binding}\Rightarrow\text{hamiltonian}$~\cite{BKSWY1}.

\begin{theorem}[(Bauer et al.\ \cite{BKSWY1})]\label{thm:43}\mbox{}\\*
  Let $\pi=(d_1\le\cdots\le d_n)$ be a graphical sequence, with $n\ge3$. If
  (setting $d_0=0$)
  \begin{subequations}
    \begin{gather}
      d_i\le i\; \Rightarrow\; d_{n-i}\ge n-i,\quad
      \text{for $1\le i\le\tfrac12(n-1)$};\label{eq:43}\\
      d_{i-1}\le i\; \Rightarrow\; d_{n-i}\ge\tfrac12(n+1),\quad
      \text{for $1\le i\le\tfrac12(n-3)$, if $n$ is odd},\label{eq:44}
    \end{gather}
  \end{subequations}
  then every $1$-binding realization of~$\pi$ is hamiltonian.
\end{theorem}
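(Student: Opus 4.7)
The plan is a Chv\'atal-type contradiction argument combined with the 1-binding hypothesis. Let $\pi$ satisfy (4.3) and (4.4) and suppose that some 1-binding realization $G$ of $\pi$ is not hamiltonian. The binding number is monotone nondecreasing under edge addition, so I iteratively add edges to $G$ to obtain a graph $G^*$ that is 1-binding, still nonhamiltonian, and edge-maximal with respect to nonhamiltonicity (any further edge added creates a Hamilton cycle). Writing $\pi^* = \pi(G^*)$, we have $\pi^* \ge \pi$, and since the Chv\'atal-type conditions (4.3) and (4.4) are preserved under majorization, $\pi^*$ satisfies them as well.

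Next I invoke the standard Ore rotation argument: for every nonedge $uv$ in $G^*$, the graph $G^*+uv$ is hamiltonian, so $G^*$ contains a Hamilton $uv$-path, and hence $d(u) + d(v) \le n - 1$. Pick a nonadjacent pair $(u, v)$ maximizing $d(u) + d(v)$, and set $i := d(u) \le d(v) \le n - 1 - i$, so $i \le \tfrac12(n-1)$. Maximality of $d(u) + d(v)$ forces every non-neighbor of $v$ to have degree at most $i$ (otherwise we could replace $u$), and every non-neighbor of $u$ to have degree at most $n - 1 - i$. Counting the two non-neighbor sets gives at least $i$ vertices of degree $\le i$ and at least $n - i$ vertices of degree $\le n - 1 - i$; equivalently, $d_i^* \le i$ and $d_{n-i}^* \le n - 1 - i$. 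For any $i$ in the range $1 \le i \le \tfrac12(n-1)$, this directly contradicts (4.3) at index $i$ and completes the main line of the argument.

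The role of (4.4) and the main obstacle lie in the boundary case $i = \tfrac12(n-1)$ for $n$ odd, where the Ore/Chv\'atal counting becomes tight: the $(n-1)/2$ non-neighbors of $v$ together with $u$ form a set $A$ of size $(n+1)/2$ consisting entirely of vertices of degree at most $(n-1)/2$. Here the plan is to apply the 1-binding condition to $A$ (or to a carefully chosen subset that avoids $v$ in its neighborhood), forcing the existence of a vertex of degree $\ge \tfrac12(n+1)$ positioned at precisely the $d_{n-i}^*$ coordinate that (4.4) constrains; this contradicts the bound $d_{n-i}^* \le \tfrac12(n-1)$ produced by the Ore step. I expect the main obstacle to be exactly this matching: choosing the binding set so that its neighborhood lands inside the low-degree block identified above, so that the forced high-degree vertex is localized to the correct index of $\pi^*$ and thereby contradicts the Chv\'atal-type pattern of (4.4) rather than some harmless coordinate. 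The Ore/Chv\'atal skeleton is classical and the rest is routine bookkeeping, so essentially all the novelty of the theorem beyond Theorem~\ref{thm:11} is concentrated in this single step.
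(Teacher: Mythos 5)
The survey itself contains no proof of Theorem~\ref{thm:43} --- it quotes the result from \cite{BKSWY1} and supplies only the extremal graphs for weak optimality --- so your proposal can only be judged on its own merits, and there it has both an internal inconsistency and a genuine gap. Your reduction to an edge-maximal nonhamiltonian, still $1$-binding supergraph $G^*$ with $\pi^*\ge\pi$, and the Ore--Chv\'atal step giving $d^*_i\le i$ and $d^*_{n-i}\le n-1-i$ for $i=d(u)\le\tfrac12(n-1)$, are correct. But in your second paragraph you claim this contradicts \eqref{eq:43} ``for any $i$ in the range $1\le i\le\tfrac12(n-1)$'' --- and indeed, as printed, \eqref{eq:43} is verbatim Chv\'atal's condition \eqref{eq:11}, so on that reading the proof ends right there, \eqref{eq:44} and the $1$-binding hypothesis are never used, and the theorem is an immediate (and empty) corollary of Theorem~\ref{thm:11}. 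Your third paragraph then treats $i=\tfrac12(n-1)$ ($n$ odd) as an open boundary case; you cannot have it both ways. The nontrivial theorem --- the one consistent with the optimality discussion, which needs $K_i+(\overline{K_i}\cup K_{n-2i})$ to be $1$-binding and hence needs $n-2i\ge2$ --- excludes $i=\tfrac12(n-1)$ from \eqref{eq:43} when $n$ is odd, because $K_{(n-1)/2}+\overline{K_{(n+1)/2}}$ has binding number $\tfrac{n-1}{n+1}<1$. All of the content of the theorem lives in the case $d(u)=d(v)=\tfrac12(n-1)$ that your third paragraph only gestures at.

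In that case your plan does not close. The Ore counting yields $u$ together with its $\tfrac12(n-1)$ non-neighbours, i.e.\ $\tfrac12(n+1)$ vertices of degree at most $\tfrac12(n-1)$, so only $d^*_{(n+1)/2}\le\tfrac12(n-1)$. To contradict \eqref{eq:44} at an admissible index $j\le\tfrac12(n-3)$ you must produce $d^*_{n-j}\le\tfrac12(n-1)$ with $n-j\ge\tfrac12(n+3)$ --- strictly more low-degree vertices than the counting gives --- \emph{and} you must verify the antecedent $d^*_{j-1}\le j$, i.e.\ exhibit $j-1$ vertices of degree at most $j$ (the $\overline{K_{j-1}}$ of the extremal graph $K_j+(\overline{K_{j-1}}\cup2K_{(n+1)/2-j})$); your argument produces neither. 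Note also that the coordinate you aim at, $d^*_{n-i}$ with $i=\tfrac12(n-1)$, lies outside the range of \eqref{eq:44}. Finally, the binding hypothesis is pointed the wrong way: $\bind(G^*)\ge1$ bounds $|N(S)|/|S|$ from below and does not manufacture a vertex of degree $\ge\tfrac12(n+1)$ (if it did, you would contradict the Ore bound directly and \eqref{eq:44} would be superfluous). Its actual role is negative --- applied to the $\tfrac12(n+1)$ low-degree vertices it forbids the split-graph configuration $K_{(n-1)/2}+\overline{K_{(n+1)/2}}$ and forces edges inside that set --- and the structural analysis showing those edges organize the set into essentially two cliques, whence a violation of \eqref{eq:43} at a smaller index or of \eqref{eq:44}, is the theorem. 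It is not routine bookkeeping.
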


\noindent
If~$\pi$ fails to satisfy~\eqref{eq:43} (resp.,~\eqref{eq:44}) for
some~$i$, then~$\pi$ is majorized by the degrees of
$K_i+\linebreak[1] \bigl(\overline{K_i}\cup K_{n-2i}\bigr)$ (resp.,
$K_i+\bigl(\overline{K_{i-1}}\cup 2K_{(n+1)/2-i}\bigr)$), which are each
$1$-binding and nonhamiltonian. Thus Theorem~\ref{thm:43} is weakly optimal
for $\text{$1$-binding}\Rightarrow\text{hamiltonian}$.

Since $1$-binding is also a necessary condition for a graph to contain a
$1$-factor, the following is of some interest~\cite{BNS}.

\begin{theorem}[(Bauer, Nevo \& Schmeichel \cite{BNS})]\label{thm:44}
  \mbox{}\\*
  Let $\pi=(d_1\le\cdots\le d_n)$ be a graphical sequence, with~$n$ even.
  If (setting $d_0=0$)
  \begin{subequations}
    \begin{gather}
      \begin{array}{@{}r@{}}
        d_i\le i\;\wedge\;d_{i+2j+1}\le i+2j\; \Rightarrow\;
        d_{n-i}\ge n-(i+2j+1),\qquad\qquad\qquad\qquad\qquad\qquad\qquad\\
        \text{for $1\le i\le\tfrac12(n-6)$ and
          $1\le j\le\tfrac14(n-2i-2)$};
      \end{array}\label{eq:45}\\
      d_{n/2-5}\ge\tfrac12n-3\; \vee\; d_{n/2+4}\ge\tfrac12n-1,\quad
      \text{if $n\ge10$},\label{eq:46}
    \end{gather}
  \end{subequations}
  then every $1$-binding realization of~$\pi$ contains a $1$-factor.
\end{theorem}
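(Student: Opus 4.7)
I would argue by contradiction using Tutte's 1-factor theorem. Suppose $G$ is a $1$-binding realization of $\pi$ that contains no $1$-factor. Then Tutte's theorem yields $S \subseteq V(G)$ with $o(G - S) > |S|$, where $o(\cdot)$ counts odd components; since $n$ is even, parity upgrades this to $o(G - S) \ge |S| + 2$. I would choose $S$ canonically, say with $s := |S|$ maximum subject to $o(G - S) \ge s + 2$, and write the components of $G - S$ as $k$ isolated vertices together with non-singleton odd components $C_1, \ldots, C_p$ of odd sizes $3 \le c_1 \le \cdots \le c_p$ (and possibly some even components). Thus $k + p \ge s + 2$.

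The $1$-binding hypothesis enters by taking $T$ equal to the set of singleton components of $G - S$: since $N(T) \subseteq S \ne V(G)$, we get $k = |T| \le |N(T)| \le s$, and hence $p \ge 2$. The pointwise degree bounds in $G$ follow: each singleton has degree $\le s$, each vertex of $C_j$ has degree $\le s + c_j - 1$, and each vertex of $S$ has degree $\le n - 1$. So $\pi$ is majorized by the degree sequence of a graph of the form $K_s + (k K_1 \cup K_{c_1} \cup \cdots \cup K_{c_p})$, possibly augmented with even components. In the principal case $k = s$, $p = 2$ (no even components), taking $i = s$ and $j = \tfrac12(c_1 - 1)$ (an integer $\ge 1$ since $c_1$ is odd and $\ge 3$) gives $d_i \le s = i$ and $d_{i + 2j + 1} \le s + c_1 - 1 = i + 2j$, while $d_{n - i} \le s + c_2 - 1 < s + c_2 = n - (i + 2j + 1)$, a direct violation of (4.5). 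One verifies that the range requirements $1 \le i \le \tfrac12(n - 6)$ and $1 \le j \le \tfrac14(n - 2i - 2)$ hold automatically from $c_1, c_2 \ge 3$ and $c_1 \le c_2$.

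The main obstacle is the case analysis outside this principal case: (a) $k < s$, which forces $p \ge 3$ and typically requires the choice $i = s + c_1 - 1$, $j = \tfrac12(c_2 - 1)$ followed by a similar but more delicate arithmetic; (b) the presence of non-trivial even components in $G - S$, which modifies the bookkeeping on degree positions; and (c) extremal boundary configurations where the natural $(i, j)$ falls outside the range of (4.5). Condition (4.6), active only for $n \ge 10$ and involving the indices $n/2 - 5$ and $n/2 + 4$, is calibrated precisely to trap these boundary cases: the corresponding extremal graphs have a Tutte set of size approximately $5$ combined with several small odd components summing to about $n - 5$. To close the proof one exploits the edge-maximality of a minimal counterexample among $1$-binding graphs without a $1$-factor, and systematically enumerates the residual structures, verifying that in each case either (4.5) or (4.6) must fail; this enumeration, together with a careful verification that one can always choose $(i, j)$ within the prescribed ranges, is the technical heart of the argument.
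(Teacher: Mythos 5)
The survey does not actually prove Theorem~4.4; it only states the result (attributed to a paper in preparation~\cite{BNS}) and exhibits the extremal graphs certifying weak optimality, so there is no in-paper proof to compare against. Judged on its own terms, your outline has the right architecture --- Tutte's theorem plus parity to get $o(G-S)\ge |S|+2$, the $1$-binding hypothesis applied to the set of singleton components to force $k\le s$, majorization of $\pi$ by the degree sequence of $K_s+(kK_1\cup K_{c_1}\cup\cdots\cup K_{c_p})$, and then locating a violated Chv\'atal-type condition --- and your computation in the case $k=s$, $p=2$ is correct, including the range check $j\le\frac14(n-2i-2)\Leftrightarrow c_1\le c_2$. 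But what you submit is a strategy, not a proof: the entire content of the theorem lies in the case analysis you explicitly defer (``the technical heart of the argument''), namely showing that \emph{every} configuration with $k\le s$, $k+p=s+2$, arbitrary odd component sizes, and possible even components forces a violation of~\eqref{eq:45} or~\eqref{eq:46}. Nothing in your write-up establishes this, and it is not routine --- for instance $K_2+4K_3$ ($k=0$, $p=4$, $n=14$) satisfies~\eqref{eq:46} and is only caught by~\eqref{eq:45} at the single admissible pair $(i,j)=(4,1)$, which does not come from either of your two proposed index choices.

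Two further concrete problems. First, your description of what condition~\eqref{eq:46} is ``calibrated'' for is wrong: its extremal graph is $K_{n/2-4}+\bigl(\overline{K_{n/2-5}}\cup 3K_3\bigr)$, i.e.\ a Tutte set of size $n/2-4$ (growing with $n$) together with $n/2-5$ singletons and three triangles --- not ``a Tutte set of size approximately $5$ with small odd components summing to about $n-5$.'' This configuration escapes~\eqref{eq:45} because $d_i\le i$ fails for every $i\le\frac12(n-6)$, which is exactly the phenomenon ($k=s-1$, $p=3$) your case~(a) would have to confront; misidentifying it suggests the deferred analysis has not actually been thought through. Second, your appeal to ``edge-maximality of a minimal counterexample among $1$-binding graphs without a $1$-factor'' is not a legitimate device: adding edges can create a $1$-factor and can destroy or alter binding sets, so you cannot pass to an edge-maximal counterexample; the correct move is the one you already made (bound the degrees of $G$ by those of the completed Tutte configuration), and the phrase should be deleted. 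Finally, your claim that the range requirements ``hold automatically'' overlooks $i=s\ge1$: when $s=0$ (e.g.\ a $1$-binding disjoint union of two odd cliques) the stated range $1\le i$ of~\eqref{eq:45} gives you nothing, and you would need to address this boundary case (or the $0\le i$ reading suggested by the convention $d_0=0$) explicitly.
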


\noindent
If~$\pi$ fails to satisfy~\eqref{eq:45} for some $i,j$
(resp.,~\eqref{eq:46}), then~$\pi$ is majorized by the degree sequence of
$K_i+\bigl(\overline{K_i}\cup K_{2j+1}\cup K_{n-2i-2j-1}\bigr)$ (resp.,
$K_{n/2-4}+\bigl(\overline{K_{n/2-5}}\cup3K_3\bigr)$) which are each
$1$-binding without a $1$-factor. Thus Theorem~\ref{thm:44} is weakly
optimal for $\text{$1$-binding}\Rightarrow\text{$1$-factor}$.

For $1<b<3/2$, a best monotone degree condition for
$\text{$b$-binding}\Rightarrow\text{hamiltonian}$ is not currently known.
An asymptotically best minimum degree condition for
$\text{$b$-binding}\Rightarrow\text{hamiltonian}$ when $1<b<3/2$, namely
$\delta(G)\ge\Bigl(\dfrac{2-b}{3-b}\Bigr)n$, was established
in~\cite{BS12}. A somewhat involved best monotone degree condition for
$\text{$b$-binding}\Rightarrow\text{$1$-tough}$ was given in~\cite{BKSWY1},
where it was conjectured that this condition is also a best monotone degree
condition for $\text{$b$-binding}\Rightarrow\text{hamiltonian}$. We refer
the reader to~\cite{BKSWY1} for details.

A best monotone degree condition for
$\text{$2$-factor}\Rightarrow\text{hamiltonian}$ is also not currently
known. As the graph $K_1+2K_{(n-1)/2}$ shows, a best minimum degree
condition for $\text{$2$-factor}\Rightarrow\text{hamiltonian}$ is Dirac's
hamiltonian condition $\delta(G)\ge\tfrac12n$. On the other hand, we have
the following best monotone degree condition for
$\text{$2$-factor}\Rightarrow\text{$1$-tough}$~\cite{HNBMC2F1T}.

\begin{theorem}[(Bauer, Nevo \& Schmeichel \cite{HNBMC2F1T})]
  \label{thm:45}\mbox{}\\*
  Let $\pi=(d_1\le\cdots\le d_n)$ be a graphical sequence, with $n\ge3$. If
  (setting $d_0=0$)
  \begin{subequations}
    \begin{gather}
      d_i\le i\; \Rightarrow\; d_{n-i}\ge n-i,\quad
      \text{for $1\le i\le\tfrac12(n-3)$};\label{eq:47}\\
      d_{i-1}\le i\; \Rightarrow\; d_{n-i}\ge\tfrac12(n+1),\quad
      \text{for $1\le i\le\tfrac12(n-5)$, if $n$ is odd};\label{eq:48}\\
      d_{i-1}\le i\; \Rightarrow\; d_{n/2-1}\ge\tfrac12n\; \vee\;
      d_{n-i}\ge\tfrac12n+1,\quad
      \text{for $1\le i\le\tfrac12(n-4)$, if $n$ is even},
      \label{eq:49}
    \end{gather}
  \end{subequations}
  then every realization of~$\pi$ with a $2$-factor is $1$-tough.
\end{theorem}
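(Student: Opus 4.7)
The plan is to argue by contrapositive: assume $G$ is a realization of $\pi$ with a 2-factor $F$ but $G$ is not 1-tough, and derive that $\pi$ must fail one of (4.7)--(4.9). Pick $X \subseteq V(G)$ witnessing the failure of 1-toughness, with $|X| = i$ taken minimum, and subject to this with the component structure of $G - X$ made extremal (e.g.\ fewest components, or with the largest minimum component). Write the components of $G - X$ as $C_1, \ldots, C_s$ with $|C_1| \le \cdots \le |C_s|$, where $s \ge i + 1$; every vertex $v \in C_k$ satisfies $\deg_G(v) \le i + |C_k| - 1$.

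The 2-factor $F$ plays a crucial structural role. Since every $x \in X$ is incident to exactly two edges of $F$, the total number of $F$-edges between $X$ and $V(G) \setminus X$ is at most $2i$. Each component $C_k$ is either (a) entirely covered by $F$-cycles lying within $C_k$, which forces $|C_k| \ge 3$ and contributes no $F$-edge to $X$, or (b) met by some $F$-cycle that also visits $X$, which forces the number of $F$-edges between $C_k$ and $X$ to be a positive even number, hence at least 2. Summing shows at most $i$ components are of type (b), and since $s \ge i + 1$, at least one is of type (a), so $|C_s| \ge 3$; also $\delta(G) \ge 2$ since $F$ is spanning and 2-regular.

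I would then run a case analysis on the sizes of the smallest components and on the parity of $n$. When $|C_1| = 1$, one vertex has degree at most $i$, so $d_1 \le i$; combined with the extremal type-(a) component being a clique on roughly $n - 2i$ vertices, one shows that $\pi$ is majorized by the degree sequence of $K_i + (\overline{K_i} \cup K_{n-2i})$, violating (4.7). When $|C_1| \ge 2$, the budget of $\le 2i$ $F$-edges from $X$ combined with $s \ge i + 1$ components forces the type-(b) components to be small independent sets (with every vertex using both $F$-edges to $X$) and the type-(a) components to be cliques of size $\ge 3$; the extremal configurations match those exhibited after the theorem, for instance $K_i + (\overline{K_{i-1}} \cup 2K_{(n+1)/2 - i})$ for odd $n$, yielding that $\pi$ is majorized by the corresponding sequence and hence violates (4.8) or (4.9) according to parity.

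The main obstacle will be the case analysis itself: one must carefully track how the $\le 2i$ $F$-edges distribute among the components, verify that each candidate configuration of the $|C_k|$ is actually realizable by some graph possessing a 2-factor (so as to match a bona fide bad example), and then match each surviving configuration with one of the three conditions. The even-$n$ case giving rise to (4.9) is particularly delicate because the disjunctive conclusion $d_{n/2-1} \ge \tfrac12 n \,\vee\, d_{n-i} \ge \tfrac12 n + 1$ corresponds to two distinct near-extremal bad configurations, both of which must be ruled out. The proof of Theorem~\ref{thm:43} (the analogue for $1$-binding $\Rightarrow$ hamiltonian) provides a useful template; the present argument replaces the stronger $1$-binding hypothesis with the weaker $2$-factor hypothesis, which explains the need to explicitly count $F$-edges in order to control the component structure.
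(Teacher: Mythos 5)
First, a point of comparison: the survey itself does not prove Theorem~\ref{thm:45} --- it cites a paper in preparation and only verifies weak optimality by exhibiting, for each failed condition, a non-$1$-tough graph with a $2$-factor whose degrees majorize $\pi$. So your proposal has to stand on its own. Its opening is sound and is surely the right mechanism: take $X$ with $|X|=i<\omega(G-X)=s$, note that a vertex of a component $C_k$ has degree at most $i+|C_k|-1$, and use the $2$-factor $F$ to show that each component either carries its own $F$-cycles (hence has at least $3$ vertices) or absorbs a positive even number of the at most $2i$ $F$-edges leaving $X$, so at most $i$ components are of the latter kind and at least one component of size at least $3$ exists. This correctly explains why \eqref{eq:47} may stop at $i\le\tfrac12(n-3)$ and why the balanced configurations behind \eqref{eq:48} and \eqref{eq:49} are the ones that survive.

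However, the case analysis --- which is the entire substance of the proof --- is not carried out, and the outline you give for it is wrong in at least one concrete respect. You split on $|C_1|=1$ versus $|C_1|\ge 2$ and assign the first case to \eqref{eq:47} and the second to \eqref{eq:48}/\eqref{eq:49}; but the extremal graph for \eqref{eq:48}, namely $K_i+\bigl(\overline{K_{i-1}}\cup 2K_{(n+1)/2-i}\bigr)$, has $i-1$ singleton components, so it lies in your first case yet has only $i-1$ vertices of degree at most $i$, so $d_i\le i$ fails and \eqref{eq:47} is \emph{not} violated. The correct dichotomy must track how many of the $2i$ $F$-edges from $X$ are consumed by small components, not merely whether a singleton exists. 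A second, more serious gap: a general witness $(G,X)$ need not resemble any of the three extremal families, and the violated condition need not occur at index $|X|$. For example, with $i=3$ and $G=K_3+(K_1\cup K_2\cup K_2\cup K_3)$ on $n=11$ vertices (which has a $2$-factor and is not $1$-tough), the degree sequence $3^14^45^310^3$ satisfies \eqref{eq:47} and \eqref{eq:48} at every index up to $3$ and fails \eqref{eq:47} only at index $4=|X|+1$. Your sketch never contemplates shifting the index, and without that the argument cannot close. So the approach is promising, but as written it is a plan whose hardest and most error-prone part is missing and whose stated roadmap for that part would fail.
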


\noindent
If~$\pi$ fails to satisfy \eqref{eq:47}, \eqref{eq:48}, or~\eqref{eq:49},
resp., for some~$i$, then~$\pi$ is majorized by the degree sequence of
$K_i+\bigl(\overline{K_i}\cup K_{n-2i}\bigr)$,
$K_i+\bigl(\overline{K_{i-1}}\cup 2K_{(n+1)/2-i}\bigr)$, or
$K_i+\bigl(\overline{K_{i-1}}\cup K_{n/2-i}\cup K_{n/2+1-i}\bigr)$, resp.,
where each graph contains a $2$-factor, but is not $1$-tough. Thus
Theorem~\ref{thm:45} is weakly optimal for
$\text{$2$-factor}\Rightarrow\text{$1$-tough}$.

We put forth the following conjecture.

\setcounter{conjecture}{\value{theorem}}
\addtocounter{theorem}{1}
\begin{conjecture}\label{conj:46}\mbox{}\\*
  The degree condition in Theorem~\ref{thm:45} is a best monotone degree
  condition for $\text{$2$-factor}\Rightarrow\linebreak\text{hamiltonian}$.
\end{conjecture}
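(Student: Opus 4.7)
Weak optimality is essentially free from Theorem~4.5: the three families of extremal graphs exhibited there, namely $K_i+(\overline{K_i}\cup K_{n-2i})$, $K_i+(\overline{K_{i-1}}\cup 2K_{(n+1)/2-i})$, and $K_i+(\overline{K_{i-1}}\cup K_{n/2-i}\cup K_{n/2+1-i})$, each contain a $2$-factor and fail to be $1$-tough, hence are non-hamiltonian. These graphs therefore witness failures of (4.7), (4.8), (4.9) respectively for the implication $\text{$2$-factor}\Rightarrow\text{hamiltonian}$ as well, so the condition is weakly optimal in the required sense.

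The real content is the forward direction: every realization $G$ of a $\pi$ satisfying (4.7)--(4.9) that contains a $2$-factor $F$ is hamiltonian. By Theorem~4.5 such a $G$ is already $1$-tough. I would argue by contradiction via a longest cycle. Let $C$ be a longest cycle in $G$, of length $l<n$, and fix a component $H$ of $G-V(C)$, together with a vertex $x\in V(H)$ with neighbors $u_1,\ldots,u_k$ on $C$. The standard P\'osa/Chv\'atal rotation shows that $\{x\}\cup\{u_i^{+}:1\le i\le k\}$ is independent and that pairs of these vertices have restricted adjacency to $V(H)\cup V(C)$. Bounding the degrees of this independent set in terms of $l$ and $|V(G)-V(C)|$ and comparing with $\pi$ recovers exactly the Chv\'atal-type pattern $d_i\le i$ together with an upper bound on $d_{n-i}$; for $1\le i\le\tfrac12(n-3)$ this is precisely the content of~(4.7), and contradiction follows just as in the proof of Theorem~1.1.

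The main obstacle is the borderline range $i\in\{\tfrac{n-2}{2},\tfrac{n-1}{2}\}$, where (4.7) is strictly weaker than Chv\'atal's condition~(4.1) and where one must instead invoke (4.8) or (4.9). Here the pure P\'osa rotation gives out by one unit, and the $2$-factor $F$ must do the additional work. My plan is to show that any cycle of $F$ disjoint from $V(C)$ must be short, since otherwise one could splice a segment of such a cycle with a segment of $C$ along two chords coming from $F$ to obtain a strictly longer cycle. Short off-$C$ cycles in $F$, together with the extra edges of $F$ on $V(C)$, pin down the degree structure so tightly that the putative counterexample $G$ must essentially coincide with one of the extremal graphs $K_i+(\overline{K_{i-1}}\cup 2K_{(n+1)/2-i})$ (when $n$ is odd) or $K_i+(\overline{K_{i-1}}\cup K_{n/2-i}\cup K_{n/2+1-i})$ (when $n$ is even); these are precisely the configurations that violate (4.8) and (4.9). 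The hard part is making this reduction clean, since many intermediate $F$-shapes need to be ruled out, and the parity split together with several sub-cases on how $F$ meets $C$ is likely unavoidable; I expect this case analysis, rather than any single conceptual step, to be where the genuine difficulty of the conjecture lies.
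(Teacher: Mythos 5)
This statement is Conjecture~4.6: the paper offers no proof of it, and explicitly states just above it that a best monotone degree condition for $\text{$2$-factor}\Rightarrow\text{hamiltonian}$ is not currently known. So there is no proof in the paper to compare yours against; what you have written is a research plan for an open problem, not a proof. The half of your proposal that is solid is weak optimality: the three extremal families attached to Theorem~4.5 each contain a $2$-factor and are not $1$-tough, hence not hamiltonian, so they do witness weak optimality for $\text{$2$-factor}\Rightarrow\text{hamiltonian}$ exactly as you say. That observation is presumably why the authors believe the conjecture, but it is the easy half.

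The genuine gap is the sufficiency direction, and your sketch does not close it. In the range $1\le i\le\tfrac12(n-3)$ you assert that a P\'osa/Chv\'atal rotation argument ``recovers exactly'' condition~(4.7), but you do not carry this out, and the standard longest-cycle/rotation machinery yields Chv\'atal's condition~(1.1) for all $i\le\tfrac12(n-1)$ rather than the weakened~(4.7); the whole point of the conjecture is that the $2$-factor hypothesis must absorb the missing borderline indices. For those borderline cases your plan rests on the claim that every cycle of the $2$-factor $F$ disjoint from the longest cycle $C$ can be spliced into $C$ ``along two chords coming from $F$'' unless it is short -- but the edges of $F$ on an off-$C$ cycle of $F$ lie entirely within that cycle and give you no connection to $C$; any splicing must use edges of $G$ outside $F$, over which the longest-cycle hypothesis gives only weak control. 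You acknowledge yourself that the reduction to the extremal configurations is not done and that the case analysis is where the difficulty lies; that difficulty is precisely the open content of the conjecture, so the proposal as it stands establishes nothing beyond what the paper already records.
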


\noindent
A best monotone condition for
$\text{$1$-tough}\Rightarrow\text{hamiltonian}$ is again not currently
known. However, a best minimum degree condition for
$\text{$1$-tough}\Rightarrow\text{hamiltonian}$ was given by
Jung~\cite{J78} (see also~\cite{BMS89}).

\begin{theorem}[(Jung \cite{J78})]\label{thm:47}\mbox{}\\*
  Let~$G$ be a $1$-tough graph on $n\ge11$ vertices. If
  $\delta(G)\ge\tfrac12n-2$, then~$G$ is hamiltonian.
\end{theorem}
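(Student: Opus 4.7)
The plan is to argue by contradiction: suppose $G$ is a $1$-tough graph on $n \ge 11$ vertices with $\delta(G) \ge \tfrac12 n - 2$ that is not hamiltonian. Let $C = v_1 v_2 \cdots v_c v_1$ be a longest cycle in $G$, so $R := V(G) \setminus V(C) \ne \varnothing$. The first step is a lower bound on $c$: a standard longest-cycle argument (comparing degrees of a vertex $w \in R$ against the arc structure of $C$) yields $c \ge 2\delta(G) \ge n - 4$, so at most four vertices lie off $C$.

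Next I would study a component $H$ of $G[R]$ together with its attachment set $A := N(H) \cap V(C) = \{v_{i_1}, \ldots, v_{i_k}\}$, indices in cyclic order. The classical rotation/extension arguments (exploiting the maximality of $C$) give the following rigidity: the successor set $A^+ := \{v_{i_j+1} : 1 \le j \le k\}$ is independent, no vertex of $A^+$ has a neighbor in $H$, and more generally chords of $C$ and edges from $H$ cannot combine to yield a longer cycle. Combined with $\delta(G) \ge \tfrac12 n - 2$ and the trivial bound $d_H(w) \le |H| - 1 + k$ for $w \in H$, this forces
\[
k \;\ge\; \delta(G) - (|H|-1) \;\ge\; \tfrac12 n - |H| - 1,
\]
so $A$ is large. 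Moreover, for each $h \in H$, the set $A^+ \cup \{h\}$ is independent of size $k + 1$, which must be reconciled with the degree of $h$.

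Now I would invoke $1$-toughness: removing $A$ from $G$ separates $H$ from each arc of $C$ that contains no attachment, so $\omega(G - A) \ge 1 + (\text{number of attachment-free arcs})$, and $|A| = k \ge \omega(G-A)$. Pushing this further by choosing the separator $A \cup T$ for a carefully chosen $T \subseteq V(C)$ (for instance, selecting alternate successors in $A^+$ to split arcs) should yield a more refined inequality of the form $k + |T| \ge k + (\text{many singleton components})$, giving a lower bound on arc sizes. In parallel, the degree condition on each $v_{i_j+1} \in A^+$ (an independent vertex whose non-neighborhood contains $H \cup (A^+ \setminus \{v_{i_j+1}\})$) yields an upper bound on $|V(C)| - k$, and hence on $|H|$.

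The hard part, I expect, is the final case analysis that reconciles these three streams of constraints for the handful of extremal configurations, stratified by $|H| \in \{1, 2, 3, 4\}$ and by $n - c \in \{1, 2, 3, 4\}$. In each of these near-extremal cases the inequalities are tight to within a small additive constant, so the contradiction must come from a slightly sharper structural observation — typically that two short arcs between consecutive attachments admit a local rerouting through $H$ that either lengthens $C$ or produces a cut of size smaller than the number of components it creates (contradicting $\tau(G) \ge 1$). The threshold $n \ge 11$ arises precisely because, for $n \le 10$, the arithmetic in these residual cases no longer forces a contradiction and small sporadic examples of $1$-tough nonhamiltonian graphs with $\delta \ge \tfrac12 n - 2$ exist; ensuring that every such case is ruled out for $n \ge 11$ will be the most delicate bookkeeping in the argument.
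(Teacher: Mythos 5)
This is a result the survey only quotes (Jung~\cite{J78}, with the simpler proof in~\cite{BMS89}); the paper contains no proof of it, so your attempt can only be measured against the literature. Your outline does follow the general shape of those proofs — longest cycle $C$, the bound $c\ge\min(n,2\delta)\ge n-4$ via $2$-connectedness, rotation/extension rigidity for a component $H$ of $G-V(C)$ and its attachment set $A$, and a toughness contradiction from a suitable cut. But as written it is a plan, not a proof: the entire load-bearing part, namely the case analysis over $|H|$ and $n-c$ that actually produces the contradiction, is explicitly deferred (``the hard part, I expect, \dots will be the most delicate bookkeeping''). Nothing in the proposal rules out even one of the extremal configurations, and the threshold $n\ge11$ is asserted to ``arise from the arithmetic'' without any arithmetic being done. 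That is a genuine gap, not a routine omission: in the published proofs this residual analysis is the bulk of the work and is where $1$-toughness is used in an essential, non-obvious way.

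Beyond the missing core, two of your intermediate steps are stated more strongly than you have justified. First, $\omega(G-A)\ge 1+(\text{number of attachment-free arcs})$ is false in general: chords of $C$ between different arcs can merge them into one component of $G-A$, so the cut $A$ alone need not certify a toughness violation; the published arguments have to choose the separating set much more carefully (and this is precisely where the difficulty lies). Second, the refined separator $A\cup T$ ``selecting alternate successors in $A^+$'' is only sketched; whether the resulting components are genuinely singletons again depends on controlling chords, which your rigidity claims about $A^+$ do not by themselves provide. So the proposal identifies the right ingredients but does not constitute a proof of the theorem.
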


\noindent
In~\cite{H95}, Ho\`{a}ng gave the following simple, but not best monotone,
degree condition for $\text{$1$-tough}\Rightarrow\text{hamiltonian}$, and
noted the difficulty of determining the
$\text{$1$-tough}\Rightarrow\text{hamiltonian}$ sinks.

\begin{theorem}[(Ho\`{a}ng \cite{H95})]\label{thm:48}\mbox{}\\*
  Let $\pi=(d_1\le\cdots\le d_n)$ be a graphical sequence, with $n\ge3$. If
  \begin{equation*}
    d_i\le i\;\wedge\;d_{n-i+1}\le n-i-1\; \Rightarrow\;
    d_j+d_{n-j+1}\ge n,\quad
    \text{for $1\le i<j\le\bigl\lceil\tfrac12n\bigr\rceil$},\label{eq:410a}
  \end{equation*}
  then every $1$-tough realization of~$\pi$ is hamiltonian.
\end{theorem}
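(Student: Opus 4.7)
The plan is to argue by contradiction. Assume $\pi$ satisfies the stated condition yet admits a $1$-tough non-hamiltonian realization $G$. Because $1$-toughness is preserved when edges are added (adding edges can only decrease $\omega(G-X)$ for any $X$), pass to an edge-maximal non-hamiltonian supergraph $G^*$ of $G$ on the same vertex set; $G^*$ remains $1$-tough. The Ho\`{a}ng condition in Theorem~\ref{thm:48} is monotone in $\pi$, so $\pi(G^*) \ge \pi$ also satisfies it. Hence it suffices to show that an edge-maximal non-hamiltonian $1$-tough graph $G^*$ must have a degree sequence violating the condition, i.e., must admit indices $1 \le i < j \le \lceil n/2 \rceil$ with $d_i \le i$, $d_{n-i+1} \le n-i-1$, and $d_j + d_{n-j+1} \le n-1$.

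The structural starting point is the Bondy-Chv\'{a}tal closure observation: for every non-edge $uv$ of $G^*$, $G^* + uv$ is hamiltonian, so $G^*$ contains a Hamilton $(u,v)$-path, and the standard Ore argument on this path forces $d(u) + d(v) \le n-1$ for every non-adjacent pair $\{u,v\}$ in $G^*$. Pick a non-edge $\{u,v\}$ maximizing $d(u)+d(v) = s$, and set $h = d(u) \le d(v)$. By maximality of $s$, every vertex non-adjacent to $v$ (including $u$) has degree at most $h$, giving $d_h \le h$; and every vertex non-adjacent to $u$, together with $u$ itself, provides $n-h$ vertices of degree at most $d(v) \le n-1-h$, so $d_{n-h} \le n-1-h$. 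The first of these supplies the first conjunct $d_i \le i$ of the Ho\`{a}ng antecedent with $i = h$.

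What remains is (i) to upgrade $d_{n-h} \le n-h-1$ to $d_{n-h+1} \le n-h-1$ (the second conjunct of the antecedent), and (ii) to locate an index $j > h$ together with vertices witnessing $d_j + d_{n-j+1} \le n-1$. Both tasks should be driven by $1$-toughness. The non-neighbors of $v$ together with $v$ are separated from the rest of $G^*$ only by $N(v)$, so $\omega(G^* - N(v)) \ge 2$ and $1$-toughness gives $|N(v)| \ge 2$; more quantitatively, analyzing the component structure of $G^* - N(u)$ and applying the Hamilton-path argument to further non-edges of the form $\{u, w\}$ and $\{v, w'\}$ (which exist because $u$ has $n-1-h \ge 1$ non-neighbors and $v$ has $n-1-d(v) \ge 1$ non-neighbors) should produce both the extra small-degree vertex needed at position $n-h+1$ and a second non-adjacent pair, distinct from $\{u,v\}$, pinning down a value of $j$ and giving $d_j + d_{n-j+1} \le n-1$.

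The main obstacle, and the genuine technical content of the argument, is this toughness-driven step: the weakening of Chv\'{a}tal's single antecedent $d_i \le i$ into the two-conjunct antecedent of Ho\`{a}ng's condition is precisely the slack that $1$-toughness must absorb, so the argument cannot simply quote the proof of Theorem~\ref{thm:11}. One must use the bounded number of components of small cuts around low-degree vertices to force either a strictly greater count of small-degree vertices (tightening the bound at position $n-h+1$) or a second non-adjacent pair that directly witnesses the failure of the conclusion at some index $j$. A secondary bookkeeping hurdle is verifying that the chosen $i$ and $j$ lie in the prescribed range $1 \le i < j \le \lceil n/2 \rceil$; boundary situations, especially when $h$ is close to $n/2$ or when $s = n-1$ is only marginally violated, will likely require separate handling, typically by checking that in those cases Chv\'{a}tal's theorem itself, combined with $1$-toughness, already forces hamiltonicity.
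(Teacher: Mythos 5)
Your framework is sound as far as it goes: passing to an edge-maximal nonhamiltonian supergraph $G^*$ (which stays $1$-tough, since $1$-toughness is an increasing property), observing that Ho\`{a}ng's degree condition is monotone, and extracting $d_h\le h$ and $d_{n-h}\le n-h-1$ from a non-edge $\{u,v\}$ maximizing $d(u)+d(v)$ is exactly the Chv\'{a}tal closure argument, correctly executed. The problem is that everything beyond this point --- which is the entire content of the theorem --- is deferred rather than proved. You say that analyzing the components of $G^*-N(u)$ and further non-edges ``should produce'' the extra low-degree vertex needed to upgrade $d_{n-h}\le n-h-1$ to $d_{n-h+1}\le n-h-1$, and a second index $j>h$ with $d_j+d_{n-j+1}\le n-1$, but you construct neither witness, give no argument for why $1$-toughness forces them, and do not verify that the resulting $i<j$ land in the range $1\le i<j\le\bigl\lceil\tfrac12 n\bigr\rceil$. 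These are not routine verifications: the passage from Chv\'{a}tal's single low-degree count to Ho\`{a}ng's two-conjunct antecedent together with the Ore-type inequality at a second index $j$ is precisely what makes the theorem nontrivial, and in Ho\`{a}ng's paper it is carried by a lengthy structural case analysis of the toughness cuts. Note also that it is not obvious that $j$ can be extracted from the same pair $\{u,v\}$: with $i=h$ one has $d_{n-j+1}\le n-h-1$ for every $j\ge h$, so producing $d_j+d_{n-j+1}\le n-1$ essentially requires an $(h+1)$-st vertex of degree at most $h$ (or a genuinely different non-adjacent pair), and nothing in your outline supplies it.

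For completeness: the survey gives no proof of this statement --- it is quoted from Ho\`{a}ng~\cite{H95} --- so there is no in-text argument to compare against. Judged on its own terms, your proposal is a plan that correctly locates the difficulty but leaves it unresolved; as written it establishes only Chv\'{a}tal's theorem restricted to $1$-tough graphs, not Ho\`{a}ng's strengthening.
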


\setcounter{corollary}{\value{theorem}}
\addtocounter{theorem}{1}
\begin{corollary}\label{cor:49}\mbox{}\\*
  Let $\pi=(d_1\le\cdots\le d_n)$ be a graphical sequence, with $n\ge3$. If
  \begin{equation*}
    d_i\le i\; \Rightarrow\; d_{n-i+1}\ge n-i,\quad
    \text{for $1\le i\le\tfrac12(n-1)$},\label{eq:411a}
  \end{equation*}
  then every $1$-tough realization of~$\pi$ is hamiltonian.
\end{corollary}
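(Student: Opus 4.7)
The plan is to deduce Corollary~\ref{cor:49} as a direct consequence of Ho\`{a}ng's Theorem~\ref{thm:48}, by showing that the degree hypothesis in the corollary forces the premise of Ho\`{a}ng's implication to be vacuous, so that the conclusion of Ho\`{a}ng's theorem applies trivially.

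Specifically, suppose $\pi=(d_1\le\cdots\le d_n)$ satisfies the degree condition in Corollary~\ref{cor:49}. To apply Theorem~\ref{thm:48} to $\pi$, I would verify the hypothesis there: for every pair $(i,j)$ with $1\le i<j\le\lceil\tfrac12n\rceil$, the implication
\[d_i\le i\;\wedge\;d_{n-i+1}\le n-i-1\;\Rightarrow\;d_j+d_{n-j+1}\ge n\]
holds. I would do so by arguing that the antecedent is never satisfied. Fix any admissible pair $(i,j)$. Since $i<\lceil\tfrac12n\rceil$, one checks that $i\le\tfrac12(n-1)$ in both parities of $n$ (for $n$ odd, $i\le\tfrac12(n+1)-1=\tfrac12(n-1)$; for $n$ even, $i\le\tfrac12 n-1\le\tfrac12(n-1)$). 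Thus $i$ lies in the range to which Corollary~\ref{cor:49}'s hypothesis applies. If $d_i\le i$, then by that hypothesis $d_{n-i+1}\ge n-i$, which directly contradicts $d_{n-i+1}\le n-i-1$. Hence the two antecedents of Ho\`{a}ng's implication cannot hold simultaneously, and the implication is vacuously true.

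Since $\pi$ satisfies the hypothesis of Theorem~\ref{thm:48}, every $1$-tough realization of $\pi$ is hamiltonian, which is precisely the conclusion of Corollary~\ref{cor:49}. The only substantive point is the routine index-range check confirming that every $i$ appearing in Ho\`{a}ng's hypothesis also falls within the range $1\le i\le\tfrac12(n-1)$ of the corollary; there is no genuine obstacle beyond this bookkeeping.
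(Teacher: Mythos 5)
Your proposal is correct and is precisely the intended derivation: the paper states Corollary~\ref{cor:49} as an immediate consequence of Ho\`{a}ng's Theorem~\ref{thm:48}, with the corollary's condition $d_{n-i+1}\ge n-i$ being exactly the negation of the second conjunct $d_{n-i+1}\le n-i-1$ in Ho\`{a}ng's antecedent, so that antecedent can never hold and the hypothesis of Theorem~\ref{thm:48} is satisfied vacuously. Your index-range check (every $i$ with $i<j\le\lceil\tfrac12n\rceil$ satisfies $i\le\tfrac12(n-1)$ in both parities) is the only bookkeeping needed, and it is done correctly.
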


\noindent
A best monotone condition for $1$-tough $\Rightarrow$ $2$-factor was given
in~\cite{HNBMC1T2F}.

\begin{theorem}[(Bauer, Nevo \& Schmeichel \cite{HNBMC1T2F})]
  \label{thm:410}\mbox{}\\*
  Let $\pi=(d_1\le\cdots\le d_n)$ be a graphical sequence, with $n\ge3$. If
  (setting $d_0=0$)
  \begin{subequations}
    \begin{gather}
      \begin{array}{@{}r@{}}
        d_i\le i+j\;\wedge\;d_{i+2j+1}\le i+j+1\;\Rightarrow\;
        d_{n-i-3j-1}\ge n-i-2j-1\;\vee\;d_{n-i-j}\ge n-i-2j,\quad\\
        \text{for $0\le i\le\tfrac12(n-7)$ and
          $1\le j\le\tfrac15(n-2i-2)$};
      \end{array}\label{eq:410}\\
      \begin{array}{@{}r@{}}
        d_i\le i+2\;\wedge\;d_{i+4}\le i+3\;\Rightarrow\;
        d_{n-i-6}\ge\tfrac12n-1\;\vee\;d_{n-i-2}\ge\tfrac12n,
        \qquad\qquad\qquad\qquad\qquad\quad\\
        \text{for $0\le i\le\tfrac12(n-18)$, if $n\ge18$ is even};
      \end{array}\label{eq:411}\\
      \begin{array}{@{}r@{}}
        d_i\le i +1\;\wedge\;d_{i+2}\le i+2\;\wedge\;d_{i+3}\le i+3\;
        \Rightarrow\; d_{n-i-5}\ge\tfrac12n-1\;\vee\;
        d_{n-i-1}\ge\tfrac12n,\qquad\qquad\\
        \text{for $0\le i\le\tfrac12(n-16)$, if $n\ge16$ is even};
      \end{array}\label{eq:412}\\
      d_{n/2-5}\ge\tfrac12n-2\; \vee\; d_{n/2}\ge\tfrac12n-1\; \vee\;
      d_{n/2+3}\ge\tfrac12n+1,\quad \text{if $n\ge10$ is even},
      \label{eq:413}
    \end{gather}
  \end{subequations}
  then every $1$-tough realization of~$\pi$ contains a $2$-factor.
\end{theorem}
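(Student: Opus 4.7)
The plan is to imitate the strategies used for Theorem~\ref{thm:342} and Theorem~\ref{thm:45}: assume $G$ is a 1-tough realization of $\pi$ with no 2-factor, extract a tight $f$-factor barrier, push the resulting structural information through the 1-toughness inequality, and conclude that $\pi$ is majorized by the degree sequence of one of a small family of ``sink'' graphs, each corresponding to one of the conditions \eqref{eq:410}--\eqref{eq:413} failing.

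First, I would invoke Tutte's $f$-factor theorem specialized to $f\equiv 2$: $G$ has no 2-factor iff there exist disjoint $S,T\subseteq V(G)$ with
\[2|S|+\sum_{v\in T}\bigl(d_{G-S}(v)-2\bigr) < h(S,T),\]
where $h(S,T)$ counts components $C$ of $G-(S\cup T)$ with $e_G(C,T)$ odd. I would then choose an extremal such barrier (minimum $|S|$, then maximum $|T|$, then maximum edge count), which forces $T$ to be independent, every $v\in T$ to satisfy $d_{G-S}(v)\le 1$, and every $h$-component to induce a clique in $G$.

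Second, I would use 1-toughness: if $|S|\ge 1$ then $\omega(G-S)\le|S|$, while if $S=\emptyset$ then $G$ is connected. Combining this bound with the $f$-factor inequality and the parity constraint on $h(S,T)$ forces $(|S|,|T|,\{\text{$h$-component sizes}\})$ into a short list of possibilities, which partition into four configurations in one-to-one correspondence with conditions \eqref{eq:410}--\eqref{eq:413} -- the latter three arising only for $n$ even, because this is precisely when the parity of $e_G(C,T)$ can be controlled by $\tfrac12 n$-sized components. For each configuration the low-degree vertices of $G$ lie in prescribed positions of $\pi$, and the degree sequence of the corresponding extremal graph -- e.g.\ $K_i+\bigl(\overline{K_i}\cup K_{2j+1}\cup K_{n-2i-2j-1}\bigr)$ together with a carefully chosen set of bridging edges that preserve 1-toughness while keeping the graph 2-factor-free -- majorizes $\pi$, which is the desired contradiction.

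The main obstacle will be the case analysis in this second step. Separating the four configurations cleanly requires tight arithmetic that simultaneously enforces the $f$-factor inequality, the 1-toughness inequality, and the parity of $e_G(C,T)$; identifying the correct bridging edges so that the extremal sinks remain 1-tough without acquiring a 2-factor is especially delicate, since naive constructions like $K_i+(\overline{K_i}\cup K_{n-2i})$ typically fail 1-toughness. Weak optimality -- verifying that each candidate sink is 1-tough and has no 2-factor -- is then a routine check along the lines of those given below Theorems~\ref{thm:342} and~\ref{thm:45}.
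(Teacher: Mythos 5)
The first thing to say is that the survey does not actually prove Theorem~\ref{thm:410}: it quotes the result from the in-preparation manuscript \cite{HNBMC1T2F}, and the only argument supplied in the text is the weak-optimality verification that follows the statement (the list of extremal graphs majorizing $\pi$ when each of \eqref{eq:410}--\eqref{eq:413} fails, together with the check that each is $1$-tough with no $2$-factor). So there is no in-paper proof of the sufficiency direction against which to compare your attempt, and the one piece of the argument the paper does carry out is precisely the piece you set aside as ``a routine check.''

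As for the sufficiency direction itself, your outline names the right tools --- Tutte's $f$-factor theorem with $f\equiv 2$, an extremal choice of barrier $(S,T)$, and the $1$-toughness inequality $\omega(G-S)\le|S|$ --- and this is indeed the standard machinery for results of this type (cf.\ the proof of Theorem~\ref{thm:342} in \cite{BBHKS12}). But the proposal stops exactly where the theorem begins. The entire content of Theorem~\ref{thm:410} is the derivation, from the barrier structure plus $1$-toughness, of the four specific degree inequalities with their specific indices: e.g.\ why the generic configuration yields $d_i\le i+j$, $d_{i+2j+1}\le i+j+1$, $d_{n-i-3j-1}\le n-i-2j-2$ and $d_{n-i-j}\le n-i-2j-1$ with $j$ ranging only up to $\tfrac15(n-2i-2)$; why three further even-$n$ configurations arise (corresponding to the graphs with four bridging edges split $3+1$, the $x,y,z$ configuration, and $K_{n/2-3}+(\overline{K_{n/2-1}}\cup K_3\cup K_1)$); and why these exhaust all extremal barriers. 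You acknowledge this as ``the main obstacle'' but give no indication of how the list of configurations is obtained or why it is complete, and the assertion that the barrier parameters are ``forced into a short list'' is precisely the claim that needs proof. A secondary caution: your claimed normalization of the extremal barrier (that every $v\in T$ satisfies $d_{G-S}(v)\le 1$ and every $h$-component is a clique) is stated without justification, and for $f\equiv2$ the standard cleanup is more delicate than for $1$-factors; if that normalization is wrong the subsequent case analysis would be built on sand. In short, the proposal is a plausible plan, not a proof, and the gap it leaves open is the theorem.
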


\noindent
If~$\pi$ fails to satisfy~\eqref{eq:410} for some $i,j$, then~$\pi$ is
majorized by the degrees of $K_{i+j}+\bigl(\overline{K_{i+2j+1}}\cup
K_{n-2i-3j-1}\bigr)$ with $\overline{K_{i+2j+1}}$ and $K_{n-2i-3j-1}$
joined by $2j+1$ independent edges. If~$\pi$ fails to
satisfy~\eqref{eq:411} for some~$i$, then~$\pi$ is majorized by the degrees
of $K_{i+2}+\bigl(\overline{K_{i+4}}\cup 2K_{n/2-i-3}\bigr)$ with
$\overline{K_{i+4}}$ joined to the two copies of $K_{n/2 -i-3}$ by four
independent edges, three of the edges to one copy, and one to the other.
If~$\pi$ fails to satisfy~\eqref{eq:412} for some~$i$, then~$\pi$ is
majorized by the degrees of
$K_{i+1}+\bigl(\overline{K_{i+3}}\cup2K_{n/2-i-2}\bigr)$ with $x,y,z\in
V(\overline{K_{i+3}})$ joined by three independent edges to one copy of
$K_{n/2-i-2}$, and~$x$ joined by one edge to the other copy. If~$\pi$ fails
to satisfy~\eqref{eq:413}, then~$\pi$ is majorized by the degree sequence
of $K_{n/2-3}+\bigl(\overline{K_{n/2-1}}\cup K_3\cup K_1\bigr)$ with
$\overline{K_{n/2-1}}$ joined by four independent edges to $K_3\cup K_1$.
Each of these graphs is $1$-tough and does not contain a $2$-factor. Thus
Theorem~\ref{thm:410} is weakly optimal for
$\text{$1$-tough}\Rightarrow\text{$2$-factor}$.

We conclude this section with the following question.

\begin{query}\label{qe4:1}\mbox{}\\*
  Is the degree condition in Theorem~\ref{thm:410} also a best monotone
  degree condition for $\text{$1$-tough}\Rightarrow\text{hamiltonian}$?
\end{query}

\setcounter{section}{4}
\setcounter{equation}{0}
\setcounter{theorem}{0}
\section{Improving Structural Results in a Best Monotone Sense}\label{sec5}

Recall from Section~\ref{sec2} that if~$P$ is a graph property and~$\pi$ is
a graphical sequence, then $\pi\in\BM(P)$ if and only if every graphical
sequence $\pi'\ge\pi$ is forcibly~$P$. Let $P_1,P_2$ be graph properties.
If $P_1\Rightarrow P_2$ and $\pi\in\BM(P_1)$, then~$\pi$ is forcibly~$P_2$.
But more is true \cite{BKSWY2}.

\begin{theorem}[(Bauer et al.\ \cite{BKSWY2})]\label{thm:51}\mbox{}\\*
  Let $P_1,P_2$ be graph properties such that $P_1\Rightarrow P_2$. Then
  $\pi\in\BM(P_1)\Rightarrow \pi\in\BM(P_2)$.
\end{theorem}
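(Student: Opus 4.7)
The plan is to unwind the definitions and chain three implications together. The key characterization I will use is the one stated at the start of Section~\ref{sec5}: $\pi\in\BM(P)$ if and only if every graphical $\pi'\ge\pi$ is forcibly~$P$. This is essentially the content of Theorem~\ref{thm:21} together with the generic construction of a best monotone $P$-theorem (its degree condition is satisfied by~$\pi$ precisely when no $\pi'\ge\pi$ is a $(P,n)$-sink, i.e., precisely when every majorizing graphical sequence is forcibly~$P$).

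Given this, I would argue as follows. Assume $\pi\in\BM(P_1)$. To show $\pi\in\BM(P_2)$, it suffices, by the characterization above, to show that every graphical $\pi'\ge\pi$ is forcibly~$P_2$. So fix any such $\pi'$. Because $\pi\in\BM(P_1)$, the same characterization (applied to $P_1$) gives that $\pi'$ is forcibly~$P_1$, i.e., every realization of~$\pi'$ has property~$P_1$. Since $P_1\Rightarrow P_2$ by hypothesis, every realization of~$\pi'$ then also has property~$P_2$, so $\pi'$ is forcibly~$P_2$. As $\pi'\ge\pi$ was arbitrary, we conclude $\pi\in\BM(P_2)$.

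There is no real obstacle here: once the set $\BM(P)$ is identified with the set of graphical sequences all of whose majorants are forcibly~$P$, the theorem is a purely formal consequence of the implication $P_1\Rightarrow P_2$ at the level of realizations. The only thing worth double-checking in a careful write-up is that the characterization of $\BM(P)$ being used is indeed the one justified in Section~\ref{sec2}, so that the proof does not covertly depend on a particular syntactic form of the degree condition of a best monotone $P$-theorem.
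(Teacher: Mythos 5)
Your argument is correct and is exactly the one the paper intends: the survey states Theorem~\ref{thm:51} without proof (citing \cite{BKSWY2}), but the characterization it recalls at the start of Section~\ref{sec5} --- $\pi\in\BM(P)$ iff every graphical $\pi'\ge\pi$ is forcibly~$P$, which follows from monotonicity plus weak optimality as developed in Section~\ref{sec2} --- reduces the theorem to precisely the two-line chain of implications you give. No gaps; your closing caveat about relying on the semantic characterization of $\BM(P)$ rather than any particular syntactic degree condition is the right thing to check, and it is justified by the generic construction in Section~\ref{sec2}.
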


\noindent
In the remainder of this section we will abbreviate
$\pi\in\BM(P_1)\Rightarrow\pi\in\BM(P_2)$ (equivalent to
$\BM(P_1)\subseteq\BM(P_2)$) by $\BM(P_1)\Rightarrow\BM(P_2)$.

For example, since $\text{$3/2$-binding}\Rightarrow\text{hamiltonian}$ by
Theorem~\ref{thm:322}, we have
\begin{equation}\label{eq:51}
  \text{BM($3/2$-binding)}\Rightarrow\text{BM(hamiltonian)}
\end{equation}
by Theorem~\ref{thm:51}. We may think of~\eqref{eq:51} as a best monotone
analogue of the structural implication
$\text{$3/2$-binding}\Rightarrow\text{hamiltonian}$.

Our interest in this section will be in implications of the form
$\BM(P_1)\Rightarrow\BM(P_2)$ when the analogous structural implication
$P_1\Rightarrow P_2$ does \underline{not} hold. In that case, we will call
$\BM(P_1)\Rightarrow\BM(P_2)$ an \emph{improvement in a best monotone
  sense} of the structural $P_1\not\Rightarrow P_2$. In the remainder of
this section, we illustrate the possibility of obtaining such improvements
with several examples.

\medskip
\noindent
\textbf{1)}\quad Although $\text{hamiltonian}\Rightarrow\text{$1$-tough}$,
the converse $\text{$1$-tough}\Rightarrow\text{hamiltonian}$ fails to hold.
But the converse does hold in a best monotone sense, i.e.,
$\BM(\text{$1$-tough})\Rightarrow\BM(\text{hamiltonian})$, by
Theorem~\ref{thm:331} ($t=1$) and Theorem~\ref{thm:11}.

\medskip
\noindent
\textbf{2)}\quad Although $a(G)\le k\Rightarrow\chi(G)\le 2 k$, the
converse $\chi(G)\le2k\Rightarrow a(G)\le k$ is not true. But the converse
is true in a best monotone sense, i.e.,
$\BM(\chi(G)\le2k)\Rightarrow\BM(a(G)\le k)$, by Theorem~\ref{thm:365} and
Theorem~\ref{thm:368}.

\medskip
\noindent
\textbf{3)}\quad As noted in Section~\ref{sec3.2}, the bound
$\bind(G)\ge3/2$ in Theorem~\ref{thm:322} is best possible, and thus there
is no structural implication of the form
$\text{$b$-binding}\Rightarrow\text{hamiltonian}$, for any $b<3/2$. But
this implication does hold in a best monotone sense for
$b>1$~\cite{BKSY11}.

\begin{theorem}[(Bauer et al. \cite{BKSY11})]\label{thm:52}\mbox{}\\*
  If $b>1$, then
  $\BM(\text{$b$-binding})\Rightarrow\BM(\text{hamiltonian})$.
\end{theorem}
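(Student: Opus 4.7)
The plan is to use the explicit best monotone characterizations on both sides: by Theorem~\ref{thm:325}, $\pi\in\BM(\text{$b$-binding})$ exactly when $\pi$ satisfies~\eqref{eq:323} and~\eqref{eq:324}; by Theorem~\ref{thm:11}, $\pi\in\BM(\text{hamiltonian})$ exactly when $\pi$ satisfies Chv\'{a}tal's condition~\eqref{eq:11}. So it suffices to prove that, for $b>1$, conditions~\eqref{eq:323} and~\eqref{eq:324} together imply~\eqref{eq:11}.

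Let $\pi=(d_1\le\cdots\le d_n)$ satisfy~\eqref{eq:323} and~\eqref{eq:324}, and suppose $d_i\le i$ for some index~$i$ with $1\le i\le\tfrac12(n-1)$. I must deduce $d_{n-i}\ge n-i$. Set $m=\bigl\lfloor n/(b+1)\bigr\rfloor$, and split on whether $i\ge m+1$ or $i\le m$.

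If $i\ge m+1$, then monotonicity of~$\pi$ together with~\eqref{eq:324} gives $d_i\ge d_{m+1}\ge n-m\ge n-i+1>i$, where the last strict inequality uses $2i\le n-1$. This contradicts $d_i\le i$, so this case cannot occur. If instead $1\le i\le m$, then $b>1$ yields $\lfloor(n-i)/b\rfloor\le n-i-1$, so $d_i\le i\le n-\lfloor(n-i)/b\rfloor-1$. Applying~\eqref{eq:323} gives $d_{\lfloor(n-i)/b\rfloor+1}\ge n-i$; combining this with $\lfloor(n-i)/b\rfloor+1\le n-i$ and monotonicity yields $d_{n-i}\ge n-i$, as desired.

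The principal point requiring care is where the strict hypothesis $b>1$ is essential, namely the Case~2 inequality $\lfloor(n-i)/b\rfloor<n-i$: without it, the conclusion of~\eqref{eq:323} would be indexed at $n-i+1$ or higher, and monotonicity could not transfer the bound down to $d_{n-i}$. The remaining work is routine bookkeeping with the floor function and the constraint $i\le\tfrac12(n-1)$, so I expect no serious obstacle beyond this.
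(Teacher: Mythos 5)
Your proof is correct: both cases check out (in particular, $b>1$ gives $\lfloor(n-i)/b\rfloor\le n-i-1$, which is exactly what makes the index in the conclusion of~\eqref{eq:323} at most $n-i$, and~\eqref{eq:324} rules out a violation of Chv\'{a}tal's condition at any index $i\ge\lfloor n/(b+1)\rfloor+1$). The survey itself defers the proof to the cited reference, but its framework in Section~\ref{sec5} (showing the degree condition of one best monotone theorem implies that of the other) is precisely the route you took, so this matches the intended argument.
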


\noindent
The hypothesis $b>1$ in Theorem~\ref{thm:52} is best possible: If
\[\pi=\Bigl(\bigl\lfloor\tfrac12n\bigr\rfloor-1
\Bigr)^{\lfloor n/2\rfloor-1}
\Bigr(n-\bigl\lfloor\tfrac12n\bigr\rfloor\Bigr)^{n-2\lfloor n/2\rfloor+2}
\bigl(n-1\bigr)^{\lfloor n/2\rfloor-1},
\end{equation*}
then $\pi\in\BM(\text{$1$-binding})$ by Theorem~\ref{thm:325}, but
$\pi\not\in\BM(\text{hamiltonian})$, since~$\pi$ fails to satisfy
Theorem~\ref{thm:11} for $i=\bigl\lfloor\tfrac12n\bigr\rfloor-1$.

\medskip
\noindent
\textbf{4)}\quad The following was proved in~\cite{BKSWY1}.

\begin{theorem}[(Bauer et al. \cite{BKSWY1})]\label{thm:53}\mbox{}\\*
  Let~$G$ be a graph with $\bind(G)\ge2$. Then
  \begin{equation*}\label{eq:52}
    \tau(G)\ge\begin{cases}
      3/2,&\text{if $\bind(G)=2$};\\
      2,&\text{if $\bind(G)=9/4$, or $\bind(G)=2+1/(2m-1)$, for some
        $m\ge2$};\\
      2+1/m,&\text{if $\bind(G)=2+2/(2m-1)$, for some $m\ge2$};\\
      \bind(G),&\text{otherwise}.
    \end{cases}
  \end{equation*}
  Moreover, these bounds are best possible for every value of
  $\bind(G)\ge2$.
\end{theorem}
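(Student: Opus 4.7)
The plan is to prove the lower bound on $\tau(G)$ case by case in terms of $\bind(G)$, and then exhibit a family of sharpness examples realising equality. I would approach both parts by a careful combinatorial analysis of the possible cutset structures, with the binding condition serving as the main engine.

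For the lower bound, let $b = \bind(G) \ge 2$ and fix a cutset $X$ witnessing $\tau(G)$, with $k = \omega(G-X) \ge 2$ components $C_1, \ldots, C_k$ of sizes $c_1 \le \cdots \le c_k$, and set $n = |V(G)|$. The basic tool is to apply the binding condition to several natural subsets. First, for each component $C_j$, the set $S_j = V(G) - X - C_j$ satisfies $N(S_j) \cap C_j = \varnothing$, so $|N(S_j)| \le n - c_j$; pairing this with $|N(S_j)| \ge b|S_j| = b(n - |X| - c_j)$ yields $(b-1)(n - c_j) \le b|X|$, i.e.\ $c_j \ge n - b|X|/(b-1)$. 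Second, for any non-universal vertex $v$, binding applied to $\{v\}$ forces $d(v) \ge b$. Third, if $s$ denotes the number of singleton components of $G - X$, then applying binding to the set of all isolated vertices of $G - X$ gives $s \le |X|/b$. Combining these inequalities with $\sum_j c_j = n - |X|$, and optimising over the feasible sequences $(c_1, \ldots, c_k)$, should deliver $\tau(G) = |X|/k \ge b$ in the generic situation.

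The boundary cases arise from integrality constraints on the $c_j$. For example, when $b = 2$ the above inequalities only become slack when every component of $G - X$ has size exactly $2$; in that situation a direct count produces the weaker bound $\tau \ge 3/2$. The values $b = 9/4$ and $b = 2 + 1/(2m-1)$ correspond to configurations in which the small component sizes cluster at other specific low integer values, permitting slightly smaller cutsets than the generic argument requires. The case analysis should enumerate these exceptional configurations, computing in each the minimum value of $|X|/k$ compatible with the binding constraints, and verifying that the required weaker bound is met.

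For sharpness, I would exhibit, for each target value of $\bind(G) \ge 2$, a graph of the form $K_s + \bigl(K_{p_1} \cup \cdots \cup K_{p_l}\bigr)$ whose parameters $s, p_1, \ldots, p_l$ are chosen so that $\bind$ equals the target and $\tau$ equals the claimed bound. For instance $K_3 + 2K_2$ realises $\bind = 2$ and $\tau = 3/2$; analogous constructions, with part-sizes tuned to hit the relevant integrality thresholds, should handle the other boundary values, including the infinite families indexed by $m$. The \emph{main obstacle} is the case analysis in the previous paragraph: pinning down precisely which integer configurations $(c_1, \ldots, c_k)$ are compatible with both the binding condition and a cutset violating $\tau \ge b$, and then building matching sharpness graphs for the exceptional families $\bind = 2 + 1/(2m-1)$ and $\bind = 2 + 2/(2m-1)$ in a way that the construction is tight for all $m \ge 2$.
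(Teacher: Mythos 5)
The survey states Theorem 5.3 without proof, citing the argument to the reference \cite{BKSWY1}, so there is no in-paper proof to compare against; your outline is broadly in the spirit of that paper's proof (fix a tough set $X$, apply the binding condition to sets such as $V(G)-X-C_j$, the components themselves, and the singleton components, then case-analyze), so the strategy is not misguided. But there is a genuine gap: the inequalities you actually list do not combine to give $\tau(G)\ge b$ even ``generically.'' From $c_j\ge n-b|X|/(b-1)$ and $\sum_j c_j=n-|X|$ one gets $|X|/k\ge |X|\bigl((b-1)n-b|X|\bigr)/\bigl((b-1)(n-|X|)\bigr)$, and from $s\le |X|/b$ one gets $k\le\tfrac12\bigl(n-|X|+|X|/b\bigr)$ and hence $|X|/k\ge 2b|X|/\bigl(bn-(b-1)|X|\bigr)$; either route gives $\tau\ge b$ only if one can also show something like $|X|\ge bn/(b+1)$, which follows from nothing you have written, and without it these estimates degenerate to roughly $\tau\ge b-1$ (essentially Woodall's older bound). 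Closing that gap requires applying the binding condition to further, more delicately chosen sets and a genuinely intricate case analysis; ``optimising over the feasible sequences $(c_1,\dots,c_k)$'' is where all the work lives, and it is exactly that work which produces the exceptional values $9/4$, $2+1/(2m-1)$, and $2+2/(2m-1)$. Your sketch never derives these values, and a proof that does not explain where they come from has not engaged with the actual content of the statement.

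The sharpness half has the same problem. The one example you verify, $K_3+2K_2$ (which does indeed have $\bind=2$ and $\tau=3/2$), covers only the first line of the display; the theorem asserts best-possibility for \emph{every} value of $\bind(G)\ge2$, including two infinite families indexed by $m$, and you give no constructions for those families nor any reason to believe that graphs of the form $K_s+(K_{p_1}\cup\cdots\cup K_{p_l})$ suffice to realize all the boundary values. So while the plan points in a reasonable direction, essentially all of the mathematical substance --- the sharpened inequalities, the enumeration of exceptional configurations, and the matching extremal families --- remains to be supplied.
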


\noindent
Thus, the structural implication
$\text{$b$-binding}\Rightarrow\text{$b$-tough}$ fails to hold for
infinitely many $b\ge2$. But this implication is true in a best monotone
sense for all $b\ge2$~\cite{BKSWY2}.

\begin{theorem}[(Bauer et al. \cite{BKSWY2})]\label{thm:54}\mbox{}\\*
  If $b\ge2$, then
  $\BM(\text{$b$-binding})\Rightarrow\BM(\text{$b$-tough})$.
\end{theorem}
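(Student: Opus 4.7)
The plan is to argue by contrapositive: assuming $\pi\notin\BM(b\text{-tough})$, I will produce a single majorant $\pi'\ge\pi$ possessing a realization whose binding number is strictly less than $b$, thereby placing $\pi$ outside $\BM(b\text{-binding})$. Recall that $\pi\in\BM(P)$ precisely when $\pi$ satisfies the degree condition of any best monotone $P$-theorem, so applying this to Theorem~3.3.1 with $t=b\ge1$ tells us that $\pi\notin\BM(b\text{-tough})$ is the same as $\pi$ violating condition~(3.3.1) at some index $i$ with $b\le i<bn/(b+1)$. The weak-optimality discussion just after Theorem~3.3.1 then supplies the desired majorant $\pi\le\pi(G_i)$, where
\[G_i=K_i+\bigl(\overline{K_j}\cup K_{n-i-j}\bigr),\qquad j=\lfloor i/b\rfloor.\]
The hypothesis $i<bn/(b+1)$ rearranges to $n>i+i/b\ge i+j$, so the clique part $C:=V(K_{n-i-j})$ is nonempty.

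Taking $\pi'=\pi(G_i)$, it will suffice to prove $\bind(G_i)<b$, since $G_i$ itself realizes $\pi(G_i)$. Writing $A:=V(K_i)$ and $B:=V(\overline{K_j})$, my candidate binding set is $S:=B\cup C$, which has size $n-i$ and satisfies $N(S)\neq V(G_i)$ because $B\not\subseteq N(S)$ (every $B$-vertex has its neighbors in $A$). The computation of $N(S)$ splits according to $|C|$. When $|C|=1$, the lone $C$-vertex has no $C$-neighbor, so $N(S)=A$ and the ratio is $i/(n-i)=i/(j+1)$, which is less than $b$ simply because $j=\lfloor i/b\rfloor$ forces $i<b(j+1)$. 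When $|C|\ge2$, every $C$-vertex has a $C$-neighbor in $S$, so $N(S)=A\cup C$ and the ratio $(n-j)/(n-i)$ is less than $b$ iff $bi-j<(b-1)n$.

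The main obstacle is verifying this last inequality, and it is exactly here that the hypothesis $b\ge2$ is used. The constraint $|C|\ge2$ gives $n\ge i+j+2$, so the claim reduces to $i<bj+2(b-1)$. This in turn follows by chaining the trivial $i<b(j+1)=bj+b$ (immediate from $j=\lfloor i/b\rfloor$) with $b\le2(b-1)$, the latter being equivalent to $b\ge2$. Thus in both subcases $\bind(G_i)<b$, so $G_i$ is a non-$b$-binding realization of $\pi(G_i)\ge\pi$, delivering $\pi\notin\BM(b\text{-binding})$. The sharpness of the assumption $b\ge2$ is localized in the single algebraic step $b\le2(b-1)$, which fits with the counterexamples to implications of this form noted elsewhere in the section for smaller $b$.
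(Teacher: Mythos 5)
Your proof is correct. Note that the survey itself gives no proof of Theorem 5.4 (it only cites \cite{BKSWY2} and supplies the sharpness example for $b\ge2$), so there is nothing in the paper to compare against line by line; but your argument is exactly the natural one and is the route the quoted discussion after Theorem 3.3.1 sets up: reduce $\pi\notin\BM(b\text{-tough})$ to a violation of condition (3.3.1) at some index $i$, pass to the majorant $\pi(G_i)$ with $G_i=K_i+(\overline{K_j}\cup K_{n-i-j})$, $j=\lfloor i/b\rfloor$, and then show directly that this extremal graph for toughness already fails to be $b$-binding. I checked the details: $j\ge1$ since $i\ge b$, and $|C|=n-i-j\ge1$ since $i<bn/(b+1)$, so $S=B\cup C$ is a legitimate member of $\mathcal{S}$ (no vertex of $B$ lies in $N(S)$); the ratio is $i/(j+1)<b$ when $|C|=1$ by $i<b(j+1)$, and $(n-j)/(n-i)<b$ when $|C|\ge2$ via $n\ge i+j+2$ together with $i<bj+b\le bj+2(b-1)$, which is where $b\ge2$ enters --- consistent with the sharpness example following the theorem. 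The only (minor) points you leave implicit are the degenerate situations where Theorem 3.3.1's hypotheses ($n\ge\lceil b\rceil+2$, nonempty index range) fail, which the survey also glosses over, and the observation that one only needs an upper bound on $\bind(G_i)$, not its exact value, which you use correctly.
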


\noindent
The hypothesis $b\ge2$ in Theorem~\ref{thm:54} is best possible: If $m\ge2$
and $\pi=(2m-3)^{m-2}(2m-2)^2\linebreak[1](3m-4)^{2m-3}$, then taking
$b=2-1/m$, we have $\pi\in\BM(\text{$b$-binding})$ by
Theorem~\ref{thm:325}, but $\pi\not\in\BM(\text{$b$-tough})$, since~$\pi$
fails to satisfy Theorem~\ref{thm:331} for $i=2m-3$.

\end{document}